\newcommand{\dd}{\mathrm{d}}
\newcommand{\ddiv}{\mathrm{div\,}}
\newcommand{\R}{\mathbb{R}}
\newcommand{\D}{\mathbb{D}}
\newcommand{\T}{\mathbb{T}}
\newcommand{\loc}{\mathrm{loc}}
\newcommand{\eq}[1]{\begin{equation}
\begin{split}
#1
\end{split}
\end{equation}}
\newcommand{\x}{\mathrm{x}}
\newtheorem{tw}{Theorem}
\newtheorem{lem}[tw]{Lemma}
\newtheorem{prop}[tw]{Proposition}
\newtheorem{rem}[tw]{Remark}
\newtheorem{df}[tw]{Definition}
\numberwithin{equation}{section}
\numberwithin{tw}{section}
\title{Construction  of weak solutions to a pressureless viscous model driven by nonlocal attraction--repulsion}
\author{Piotr B. Mucha$^*\;$, Maja Szlenk$^\dagger\;$, Ewelina Zatorska$^\S\;$}
\date{\today}
\begin{document}

\maketitle

 {
\footnotesize
\centerline{$^*\;$Faculty of Mathematics, Informatics, and Mechanics, University of Warsaw, }
\centerline{ul. Banacha 2, Warsaw 02-097, Poland}
\centerline{\small \texttt{p.mucha@mimuw.edu.pl}}

\bigbreak
\centerline{$^\dagger\;$Faculty of Mathematics, Informatics, and Mechanics, University of Warsaw, }
\centerline{ul. Banacha 2, Warsaw 02-097, Poland}
\centerline{\small \texttt{m.szlenk@uw.edu.pl}}

\bigbreak
\centerline{$^\S\;$Mathematics Institute, University of Warwick}
\centerline{Zeeman Building, Coventry CV4 7AL, United Kingdom}
\centerline{\small \texttt{ewelina.zatorska@warwick.ac.uk}}

}

\bigbreak

\begin{abstract}
    We analyze the pressureless Navier-Stokes system with nonlocal attraction--repulsion forces. Such systems appear in the context of models of collective behavior. We prove the existence of weak solutions on the whole space $\R^3$ in the case of density-dependent degenerate viscosity. For the nonlocal term it is assumed that the interaction kernel has the quadratic growth at infinity and almost quadratic singularity at zero. Under these assumptions, we derive the analog of the Bresch--Desjardins and Mellet--Vasseur estimates for the nonlocal system. In particular, we are able to adapt the approach of Vasseur and Yu \cite{vasseur-yu,vasseur-yu2} to construct a weak solution.
\end{abstract}

{\bf Keywords:}  weak solutions; hydrodynamic models of swarming; compressible Navier-Stokes equations; density-dependent viscosity.




\section{Introduction}

Hydrodynamical equations featuring nonlocal forces frequently emerge in the modeling of collective behavior. Such systems find notable applications in phenomena such as flocking and swarming observed in various animal species and bacteria (see e. g. \cite{bacteria,flocking}). On a microscopic scale, these interactions are characterized by $N$ systems of Ordinary Differential Equations (ODEs), each corresponding to one of the $N$ agents. These systems essentially represent Newton's equations, where the governing forces originate from agents attempting to adapt their positions and velocities in response to the presence of others. In contrast to the interactions between the gas particles, the interactions between the agents occur before the collision takes place, resulting in the nonlocal character of these forces.
In instances where the ensemble size is rather small, the natural choice is to adopt the ODE description, see, for instance \cite{Vic}.  Yet, when dealing with a substantial number of agents or particles, change of perspective is unavoidable. This brings us to the macroscopic scale description.
Via the mean-field limit, when the number of interacting agents is very large ($N\to\infty$), one can derive a significantly more concise description in terms of Partial Differential Equations (PDEs). The resulting systems of PDEs capture the evolution of the averaged quantities: the macroscopic density $\varrho$ and the macroscopic velocity $u$ of the individuals. The nonlocal interactions persevere the limit passage giving rise to the nolocal pressure-like forces in the macroscopic momentum equation. The examples of the above mathematical models and corresponding mean-field limits  can be found in \cite{Carrillo2010, carrillo2011,chuang-et-al}.

The purpose of this manuscript is to analyze one of such nonlocal hydrodynamic  models of collective motion described by the  degenerate pressureless Navier-Stokes equations:
\begin{equation}\label{main}
    \begin{aligned}
    \partial_t\varrho + \ddiv(\varrho u) &= 0 \\
    \partial_t(\varrho u) + \ddiv(\varrho u\otimes u) -\ddiv(\varrho\D u) + \varrho\nabla(K\ast\varrho) &=0
    \end{aligned}
    \quad \text{in} \quad [0,T]\times\R^3,
\end{equation}
where $\varrho\colon [0,T]\times\R^3\to\R_+$ is the density of the particles, and $u\colon [0,T]\times\R^3\to\R^3$ the velocity vector field, $\D u=\frac 12 (\nabla u + \nabla^T u)$ is the symmetric part of the gradient of the velocity. The convolution term 
$$\varrho\nabla(K\ast\varrho)(t,x)=
\varrho(t,x)\int_{\R^3} \nabla K(x-y)\varrho(t,y)\, \dd y,$$ 
corresponds to the attractive-repulsive forces modelling the collective behaviour.

In comparison to the classical compressible Navier-Stokes equations, our model experiences a twofold degeneration. Firstly, the conventional stress tensor incorporates density dependence, and it vanishes when the density equals $0$. Secondly, the system is pressureless, or rather, in place of the classical strongly repulsive pressure force $\nabla \varrho^\gamma$, we consider repulsion smeared around the origin and additional long-range attraction described by $\varrho \nabla K\ast\varrho$. The interaction potential $ K$ therefore  consists of two parts: 
\eq{\label{Def:K}
K(x) = \frac{1}{|x|^\alpha} + \frac{1}{2}|x|^2, }
where $\alpha\in (0,2)$.
The first part forces the particles to keep some small distance apart, and the second prevents them from spreading too far from each other, as illustrated in Fig. \ref{Fig}, below.

\begin{figure}[ht] 
  \centering
\begin{tikzpicture}
  \begin{axis}[
    axis lines=middle,
    xlabel={$|x|$},
    ylabel={$k(|x|)$},
    xmin=-0.5,
    xmax=5.5,
    ymin=0,
    ymax=9,
    width=11cm,
    height=7cm,
    thick, 
    samples=100 
  ]
   \addplot[blue, ultra thick, domain=0.25:2]{(2-x)^3+2};
  \addplot[blue, ultra thick, domain=2:5]{ 0.6*(x-2)^2+2}; \end{axis}

\foreach \x/\y in { 1.9/3.5, 7.3/3.4, 3.1/1.7, 
  5.25/1.8} {    \draw[fill=red] (\x,\y) circle (0.2);}

\draw[->, ultra thick] (2,3.2) -- (2.25,2.6);
\draw[->, ultra thick] (7.1,3.2) -- (6.7,2.8);
\draw[->, ultra thick] (3.35,1.55) -- (3.75,1.45);
\draw[->, ultra thick] (5.0,1.65) -- (4.55,1.4);
\end{tikzpicture}
\caption{Attractive-repulsive potential $k(|x|)=K(x)$.}
\label{Fig}

\end{figure}
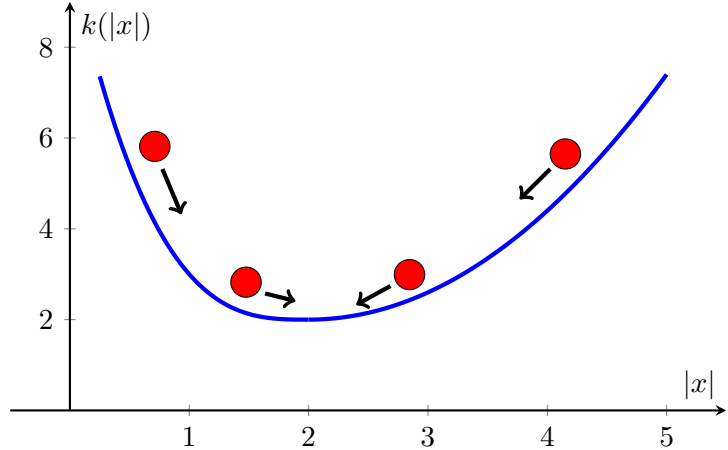

Note that the singular part of the potential $K$ up to a constant corresponds to a Riesz potential
\[ I_s(f)(x)=(-\Delta)^{-s/2}f(x) = \frac{1}{c_s}\int_{\R^3}\frac{f(y)}{|x-y|^{3-s}}\;\dd y, \quad c_s = \pi^{3/2}2^s\frac{\Gamma\left(\frac{s}{2}\right)}{\Gamma\left(\frac{3-s}{2}\right)} \]
for $s=3-\alpha$. The case $\alpha=1$ corresponds to the Newtonian potential and then equations (\ref{main}) become the Navier-Stokes-Poisson system. 

Our goal is to construct a weak solution to system \eqref{main} with potential $K$ of the form \eqref{Def:K}. Assuming formally that solutions are smooth and well behaving at infinity, testing the momentum equation by $u$ we get the basic energy law
\begin{equation}
    \frac{\dd}{\dd t} \int_{\R^3} \left( \frac 12 \varrho |u|^2 + \frac12 \rho K \ast \varrho\right)\, \dd x +
    \int_{\R^3} \mu(\varrho) |\D u|^2\, \dd x =0.
\end{equation}
There are two key elements different from the classical fluid mechanic systems:

\smallskip

$\clubsuit$ The part of energy corresponding to $K$ determines the potential energy which spurs the motion of the fluid. Mathematically, the weak solutions lack a-priori  estimates for the density in the $L^\gamma$ space corresponding to the pressure exponent.

\smallskip 

$\clubsuit$ The second point where we diverge from classical models is the form of the dissipative term. We  consider the viscosity coefficient to be equal to $\varrho$. In particular, the dissipation, and the  disappears if the density of the individuals is equal to 0. The effect of the magnitude of the density in the dissipation of the flow is  natural for the hydrodynamic systems of interacting agents.  It can be seen as macroscopic consequence of alignment forces, \cite[Chapter 5]{MMPZ}. From the mathematical perspective,  this kind of viscosity results in a different kind of regularity expected for weak solutions. 
\medskip

\smallskip 

The study of the  highlighted aspects has gained increased attention within the community over the past decade. In case of the potential proportional to $|x|^{-\alpha}$ for $\alpha\in(0,d)$, stationary solutions and their stability were thoroughly studied for example in \cite{carrillo2017,Calvez2017,carrillo2018}. In \cite{car-gwiazda} it was shown that the nonlocal Euler system admits infinitely many weak solutions. Concerning the existence of weak solutions, in \cite{car-wrob-zat} the authors proved the existence of weak solutions to the compressible Navier-Stokes system with damping, when the nonlocal term is sufficiently integrable. The existence  for Navier-Stokes-Poisson system on the torus in the same, degenerate setting as equation (\ref{main}), was shown in \cite{NS-P}.

The equation (\ref{main}) is a special case of a Navier-Stokes system 
\begin{equation}\label{density-dependent}
    \begin{aligned}
        \varrho_t+\ddiv(\varrho u) &= 0, \\
        \partial_t(\varrho u)+\ddiv(\varrho u\otimes u) -\ddiv(\mu(\varrho)\D u) - \nabla(\lambda(
        \varrho)\ddiv u) + \nabla P &= 0,
    \end{aligned}
\end{equation}
with viscosity coefficients satisfying the algebraic relation $\lambda(\varrho)=\varrho\mu'(\varrho)-\mu(\varrho)$. In two dimensions, equation (\ref{density-dependent}) with $\mu=\varrho$, $\lambda=0$ is used to describe shallow water flow. When $\mu(0)=0$ the system is degenerate, in the sense that the stress tensor does not provide us the typical $L^2$ integrability of the velocity gradient. An important tool to deal with this problem is then the Bresch-Desjardins inequality, first established in \cite{BD1}. It allows to 
get an estimate on $\nabla\varphi(\varrho)$ in $L^\infty(0,T;L^2)$ for some $\varphi$ depending on $\mu$ (in the case $\mu=\varrho, \lambda=0$,  one gets the estimate on $\nabla\sqrt{\varrho}$). With that information at hand, the authors of \cite{BD2,BD3} constructed the weak solutions to \eqref{density-dependent} with the additional regularizing terms in the momentum equation. Another interesting result, involving density-dependent viscosity, concerns the existence of weak solutions for quantum fluids, analysed for example in \cite{jungel}. To construct weak solutions to the degenerate Navier-Stokes system (\ref{density-dependent}) one needs another inequality, first used by Mellet and Vasseur in \cite{mellet-vasseur}. It provides the $L^\infty(0,T;L^1)$ estimate on the quantity $\varrho|u|^2\ln(1+|u|^2)$, which in consequence allows to derive compactness in $L^2$ of $\sqrt{\varrho}u$ (and in turn proves stability of solutions). Using this idea, Vasseur and Yu presented full, rigorous construction of global weak solutions to (\ref{density-dependent}) with $\mu=\varrho,\lambda=0$ in \cite{vasseur-yu,vasseur-yu2}, see also \cite{li-xin}. The case of more general viscosities was covered in \cite{BVY}, where the authors showed the existence of renormalized solutions, following the definition from \cite{L-VV}.

An important feature of the density-dependent viscosity case and the Bresch-Desjardins inequality is that it allows to derive a priori estimates for the density without the use of the pressure term. In the classical theory of weak solutions to Navier-Stokes equations with constant viscosities, developed by Lions \cite{lions} and Feireisl \cite{feireisl}, the construction of solutions is possible when $P\sim\varrho^\gamma$ for $\gamma>3/2$. Without the pressure term, or with too low value of $\gamma$, the density is not integrable enough to show the compactness of the approximating sequence. In the density-dependent viscosity case, by virtue of the Bresch-Desjardins inequality, we get the estimate on the gradient of the density, and then compactness follows straight from the Aubin-Lions lemma. The lack of pressure term is natural in the case of systems derived from the models of collective behaviour, since they describe the interactions of different nature than gases. The pressureless systems were obtained as a mean-field limit from the agent-based model for example in \cite{FigalliKang} in the presence of the nonlocal alignment forces. The Euler-Poisson system with quadratic confinement was also recently considered in a spherically symmetric multi-dimensional setting by Carrillo and Shu \cite{CS23} and we refer to this paper for up to date overview of results on that system in the context of continuous collective behaviour models.  In the context of our work, particularly interesting results were obtained even earlier in the one-dimensional setting \cite{car-choi-zatorska} and \cite{car-wrob-zat2}, where similar form of the nonlocal kernel was considered. In \cite{car-choi-zatorska} the authors analysed the asymptotics and critical thresholds for pressureless Euler system, whereas in \cite{car-wrob-zat2} they showed that these solutions can be approximated by the solutions of the corresponding Navier-Stokes type system with degenerate viscosity. In higher dimensions without symmetry assumptions, pressureless limits in the density-dependent viscosity framework were performed in \cite{Liang, CagDon}. In \cite{haspot}, a multidimensional version of result by Haspot and Zatorska \cite{HaZa} was proved, demonstrating that pressureless limit of  (\ref{density-dependent}) leads to the porous medium equation for "well prepared" data. However, according to our knowledge there are no corresponding results concerning the nonlocal systems.

\section{The main result}

Throughout the paper, by $\T_L^3$ we denote three-dimensional torus of size $2L$ (i.e. $\T_L^3\sim [-L,L]^3$). By $L^p(0,T;X(\Omega))$ we denote the Bochner space of functions on $[0,T]\times\Omega$, with the norm
\[ \|f\|_{L^p(0,T;X(\Omega))}^p=\int_0^T\|f(t,\cdot)\|_{X(\Omega)}^p\;\dd t. \]
In our case $\Omega$ is either $\T_L^3$ or $\R^3$ and for simplicity we will write $X$ instead of $X(\Omega)$, when from the context it is clear what the spacial domain is.

We supplement problem \eqref{main} with the initial data
\begin{equation}\label{A1} 
    \varrho(0,x)=\varrho_0(x),\quad (\varrho u)(0,x)=m_0(x),
    \mbox{ \ such that \ }\varrho_0\geq 0,\quad \sqrt{\varrho_0} \in H^1(\R^3).
\end{equation}
Moreover, for $F$ defined as
\eq{\label{def:F}
F(z)=\frac{1+z^2}{2}\ln(1+z^2),}
we assume that
\begin{equation}\label{A2} \int_{\R^3}\varrho_0 F(|u_0|)\;\dd x + \iint_{\R^3\times\R^3} F(|x-y|)\varrho_0(x)\varrho_0(y)\;\dd x\,\dd y <\infty, \end{equation}
where we define $u_0=\frac{m_0}{\varrho_0}$ on the set $\{x\in\R^3:\varrho_0(x)>0\}$. In particular, since $\frac{1}{2}z^2\leq F(z)$ for large $z$, from (\ref{A2}) it follows  that
\[ \int_{\R^3}\frac{1}{2}\varrho_0|u_0|^2\;\dd x + \iint_{\R^3\times\R^3}\frac{1}{2}|x-y|^2\varrho_0(x)\varrho_0(y)\;\dd x\,\dd y <\infty. \]

\begin{df}\label{Def:main}
We say that $(\varrho,u)$ is a weak finite energy solution to (\ref{main}) on $[0,T]\times\R^3$ with initial conditions $(\varrho_0,m_0)$, if
\[\begin{aligned} \varrho &\in L^\infty(0,T;L^1), \quad
\sqrt{\varrho} \in L^\infty(0,T;H^1), \\
\sqrt{\varrho}u &\in L^\infty(0,T;L^2), \quad
\overline{\varrho\D u} \in L^2(0,T;W^{-1,1}),
\end{aligned}\]
\[ \int_0^T\!\!\!\iint_{\R^3\times\R^3}|x-y|\varrho(t,x)\varrho(t,y)\;\dd x\,\dd y < \infty, \]
and for each $\varphi\in C_0^\infty([0,T)\times\R^3;\R)$ and $\psi\in C_0^\infty([0,T)\times\R^3;\R^3)$ we have
\[ -\int_0^T\!\!\!\int_{\R^3}\varrho\partial_t\varphi \;\dd x\,\dd t - \int_0^T\!\!\!\int_{\R^3}\varrho u\cdot\nabla\varphi \;\dd x\,\dd t = \int_{\R^3}\varrho_0\varphi(0,\cdot)\;\dd x \]
and
\begin{multline*} -\int_{\R^3} m_0\psi(0,\cdot)\;\dd x -\int_0^T\!\!\!\int_{\R^3} \sqrt\varrho \sqrt\varrho u\partial_t\psi \;\dd x\,\dd t - \int_0^T\!\!\!\int_{\R^3}(\sqrt{\varrho}u\otimes\sqrt{\varrho}u):\nabla\psi\;\dd x\,\dd t \\
+ \langle \overline{\varrho\D u},\nabla\psi \rangle  +\int_0^T\!\!\!\int_{\R^3}\varrho\nabla(K\ast\varrho)\cdot\psi \;\dd x\dd t = 0,
\end{multline*}
where we define
\[ \langle \overline{\varrho\D u},\nabla\psi\rangle := -\int_0^T\!\!\!\int_{\R^3} \varrho u \cdot(\Delta\psi+\nabla\ddiv\psi) \;\dd x\,\dd t - 2\int_0^T\!\!\!\int_{\R^3}(\nabla\sqrt{\varrho}\otimes \sqrt{\varrho}u):\nabla\psi\;\dd x\,\dd t. \]
\end{df}

\begin{rem}\label{Du_rem1}
Note that in the sense of Definition \ref{Def:main}, the velocity itself is not defined on the set where $\varrho=0$. Because of that, we operate with the variable $\sqrt{\varrho}u$ instead, and $u$ is defined only via $u(t,x)=\frac{(\sqrt{\varrho}u)(t,x)}{\sqrt{\varrho}(t,x)}$ for $(t,x)$ such that $\varrho(t,x)\neq 0$. In particular the gradient $\D u$ is not well defined as well. Because of that, we denote the stress tensor by $\overline{\varrho\D u}$ instead, which is defined using the relation
\[\begin{aligned} \overline{\varrho\D u} &= \D(\varrho u) - \nabla\varrho\otimes u = \D(\varrho u) - 2\nabla\sqrt{\varrho}\otimes \sqrt{\varrho}u. \end{aligned}\]
To avoid unnecessary complications of the notation, later on we will drop the bars and just write $\varrho\D u$, keeping in mind the above definition.
\end{rem}
 Under these assumptions, our main result states:

\begin{tw}\label{main_th}
    Let $(\varrho_0,m_0)$ satisfy (\ref{A1}-\ref{A2}). Then there exists a global in time weak solution $(\varrho,u)$ to (\ref{main}) in the sense of Definition \ref{Def:main}. In addition, this solution satisfies:\\
 (i) the  energy estimate
    \begin{equation}\label{main_energy} 
    \begin{aligned}
    \sup_{t\geq 0}\int_{\R^3}\frac{1}{2}\varrho|u|^2\;\dd x + \int_0^\infty\!\!\!\int_{\R^3}\varrho|\D u|^2\;\dd x\,\dd t + \sup_{t\geq 0}\iint_{\R^3\times\R^3} K(x-y)\varrho(t,x)\varrho(t,y)\;\dd x\,\dd y \\
    \leq \frac{1}{2}\int_{\R^3}\varrho_0|u_0|^2\;\dd x + \iint_{\R^3\times\R^3}K(x-y)\varrho_0(x)\varrho_0(y)\;\dd x\,\dd y; 
    \end{aligned}\end{equation}
   (ii) the Bresch-Desjardins estimate
    \begin{equation}\label{main_BD}
\sup_{t\in[0,T]}\int_{\R^3}|\nabla\sqrt{\varrho}|^2\;\dd x + \int_0^T\!\!\!\int_{\R^3}\varrho|\nabla u-\nabla^Tu|^2\;\dd x\, \dd t \leq C(T), \end{equation}
(iii) the Mellet-Vasseur estimate
\begin{equation}\label{main_MV}
\sup_{t\in[0,T]}\int_{\R^3}\varrho F(|u|)\;\dd x + \sup_{t\in [0,T]}\iint_{\R^3\times\R^3} F(|x-y|)\varrho(t,x)\varrho(t,y)\;\dd x\,\dd y \leq C(T).  
\end{equation}
\end{tw}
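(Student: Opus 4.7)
The plan is to follow the Vasseur--Yu construction scheme \cite{vasseur-yu,vasseur-yu2}, adapted to accommodate the nonlocal force and the unbounded spatial domain. I would first solve an approximate problem on a torus $\T_L^3$ of large but finite size $L$, regularizing (\ref{main}) with standard auxiliary terms: a small barotropic pressure $\delta\nabla\varrho^\gamma$ for $\gamma>3$, a cold pressure $\ep\nabla\varrho^{-k}$ keeping $\varrho$ bounded away from $0$, a Bresch--Desjardins capillarity $\kappa\varrho\nabla\Delta\varrho$, and drag terms $r_0 u+r_1\varrho|u|^2u$. With these, the regularized system can be solved by a Galerkin/Faedo--Galerkin argument as in \cite{BD2,BD3,vasseur-yu2}, producing strong solutions on $[0,T]\times\T_L^3$ for each set of positive parameters.

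Next I would derive versions of the energy, Bresch--Desjardins and Mellet--Vasseur estimates that are uniform in the regularization parameters and in $L$. Testing the momentum equation against $u$ and using the continuity equation on $K\ast\varrho$ yields the basic balance including the potential energy $\tfrac12\iint K(x-y)\varrho(x)\varrho(y)\,\dd x\,\dd y$. The BD identity follows, as in \cite{BD1}, by formally testing against $\nabla\log\varrho$; the new nonlocal contribution $\int\nabla(K\ast\varrho)\cdot\nabla\varrho\,\dd x$ is split into the Riesz part, estimated through Hardy--Littlewood--Sobolev using $\alpha<2$, and the confining part $\Delta|x|^2\ast\varrho$, controlled by the total mass and the second moment. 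The Mellet--Vasseur estimate is obtained by testing against $(1+\ln(1+|u|^2))u$; the nonlocal force contributes a term $\int\varrho\nabla(K\ast\varrho)\cdot u(1+\ln(1+|u|^2))\,\dd x$ which, owing to $|\nabla K(x-y)|\lesssim |x-y|^{-\alpha-1}+|x-y|$, couples to the weighted distance integral $\iint F(|x-y|)\varrho\otimes\varrho$ through a Grönwall argument that closes these two quantities together.

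After this I would pass to the limit in the regularization parameters in the order $\kappa\to 0$, $\delta\to 0$, $(r_0,r_1)\to 0$, $\ep\to 0$, mirroring \cite{vasseur-yu2}. Compactness of $\varrho_n$ at each stage comes from the BD control $\sqrt{\varrho_n}\in L^\infty(0,T;H^1)$ combined with the continuity equation via Aubin--Lions. Strong $L^2_{t,x}$ convergence of $\sqrt{\varrho_n}u_n$ is recovered by the Vasseur--Yu stability argument: the MV bound excludes concentration of $|u|$, letting one identify the weak limit $\overline{\sqrt\varrho u}$ with the correct product through truncation. The nonlocal term passes to the limit using this strong convergence: writing $\nabla K(x-y)=-\alpha(x-y)/|x-y|^{\alpha+2}+(x-y)$, the singular part is handled by convolution estimates ($\alpha<2$ ensures $\nabla K\in L^q_\loc$ for some $q>1$, so $\nabla K\ast\varrho_n$ converges strongly in $L^2_\loc$) and the confining linear part converges thanks to the uniform second-moment bound.

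Finally I would take $L\to\infty$, extending initial data in a way compatible with (\ref{A1})--(\ref{A2}); all three estimates being independent of $L$, the limit passage on $\R^3$ reduces to a tightness check for $\varrho_L$ at spatial infinity, which is supplied precisely by the potential energy term $\iint|x-y|^2\varrho(x)\varrho(y)\,\dd x\,\dd y$ preventing mass escape. The step I expect to be hardest is the Mellet--Vasseur derivation at the level of approximations: one must verify that the Grönwall coupling between $\int\varrho F(|u|)\,\dd x$ and $\iint F(|x-y|)\varrho\otimes\varrho\,\dd x\,\dd y$ closes uniformly in all parameters, balancing the slightly superquadratic growth of $F$ at infinity against the linear growth of $\nabla K$ at infinity and the singularity $|x-y|^{-\alpha-1}$ at the origin. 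The Riesz singularity itself is comfortable since $\alpha<2$ leaves slack with respect to the integrability of $\varrho$ supplied by the BD bound $\sqrt\varrho\in L^\infty_tH^1_x\hookrightarrow L^\infty_tL^6_x$, so $\varrho\in L^\infty_tL^3_x$.
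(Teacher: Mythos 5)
Your overall architecture coincides with the paper's: regularized system on an expanding torus $\T_L^3$ solved by Galerkin, uniform energy/Bresch--Desjardins/Mellet--Vasseur estimates, removal of the parameters in stages using the Vasseur--Yu truncation and renormalization machinery, and a final tightness argument at spatial infinity driven by the second moment $\iint|x-y|^2\varrho(x)\varrho(y)\,\dd x\,\dd y$. You also correctly identify the genuinely new difficulty, namely closing the Mellet--Vasseur bound by a Gr\"onwall coupling between $\int\varrho F(|u|)\,\dd x$ and $\iint F(|x-y|)\varrho(x)\varrho(y)\,\dd x\,\dd y$; the paper implements exactly this via the generalized Young inequality $ab\le F_n(a)+F_n^*(b)$ together with the observation that $F_n^*(F_n'(z))\le (a-1)F_n(z)$ whenever $zF_n'(z)\le aF_n(z)$.

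There is, however, one step that would not close as you describe it: the treatment of the Riesz part of $\int\nabla(K\ast\varrho)\cdot\nabla\varrho\,\dd x$ in the Bresch--Desjardins identity by Hardy--Littlewood--Sobolev. Bounding the absolute value of this term by HLS produces $C\|\nabla\varrho\|_{L^{6/(6-\alpha)}}^2$ on the right-hand side; since $\|\nabla\varrho\|_{L^{3/2}}\le 2\|\sqrt\varrho\|_{L^6}\|\nabla\sqrt\varrho\|_{L^2}$ and $\|\sqrt\varrho\|_{L^6}$ is itself controlled only through $\|\nabla\sqrt\varrho\|_{L^2}$, for $\alpha$ close to $2$ this right-hand side is superlinear (essentially quadratic) in the quantity $\sup_t\|\nabla\sqrt\varrho\|_{L^2}^2$ you are trying to bound, and the resulting Gr\"onwall inequality only closes on a short time interval, not for arbitrary finite $T$ as required by \eqref{main_BD}. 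The correct move is to exploit the sign: the singular part of $K$ is repulsive, and the corresponding quadratic form $\iint\nabla\varrho(x)\cdot\nabla\varrho(y)|x-y|^{-\alpha}\phi_L(x-y)\,\dd x\,\dd y$ is nonnegative (for $\alpha\le 1$ directly because $\Delta|x|^{-\alpha}\le 0$, and for $\alpha\in(1,2)$ because the truncated kernel $\phi_L(x)|x|^{-\alpha}$ has nonnegative Fourier transform), so it can simply be dropped from the left-hand side; only the confining part and the cut-off error terms need to be estimated, and these are controlled by the total mass alone. A second, smaller remark: the capillarity should be the quantum Bohm potential $\kappa\varrho\nabla(\Delta\sqrt\varrho/\sqrt\varrho)$ rather than $\kappa\varrho\nabla\Delta\varrho$, since it is the former that yields, via the BD entropy, the bounds $\sqrt\varrho\in L^2(0,T;H^2)$ and $\nabla\varrho^{1/4}\in L^4$ on which the renormalization of the momentum equation in the Mellet--Vasseur step depends.
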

\begin{rem}
Note that while the energy estimate is global in time, the constants in the two other estimates are time dependent and are finite only when $T<\infty$.
\end{rem}

Below, we explain the overall strategy of the proof and discuss the main difficulties. The starting point is to find a solution to a certain approximation of (\ref{main}). In this construction, we follow the approach of Vasseur and Yu from \cite{vasseur-yu}. 
It is a multi-level construction with many approximation parameters regularising the solutions. Additionally, we localize the interaction kernel $K_L$, and consider the problem on the torus $\T_L^3\sim [-L,L]^3$. The final step of the construction is expansion  of the torus to the whole space and recovery of the solution to the original system \eqref{main} with \eqref{Def:K}. Similar approach to derive solutions on the whole space was proposed for the system (\ref{density-dependent}) by Li and Xin in \cite{li-xin}, and for quantum isothermal fluids by Carles, Carapatoso and Hillairet in \cite{CCH}.
Our approximate system reads as follows
\eq{\label{approx}
        &\partial_t\varrho + \ddiv(\varrho u) = \varepsilon\Delta\varrho,\\
        &\partial_t(\varrho u) + \ddiv(\varrho u\otimes u) - \ddiv(\varrho\D u) + \varrho\nabla(K_L\ast\varrho) \\ 
        &= -r_0u - r_1\varrho|u|^2u + \kappa\varrho\nabla\left(\frac{\Delta\sqrt{\varrho}}{\sqrt{\varrho}}\right) -\varepsilon\nabla\varrho\cdot\nabla u -\nu\Delta^2u + \eta\nabla\varrho^{-6} + \delta\varrho\nabla\Delta^3\varrho.
}

The outline of the paper follows the strategy of proof of Theorem \ref{main_th}, described below:
\begin{enumerate}
    \item Construction of the solution to the approximated system \eqref{approx} on $(0,T)\times\T_L^3$ via the Galerkin method and and the Schauder fixed point theorem. At this point, the artificial viscosity $\varepsilon\Delta\varrho$ in the continuity equation allows us to apply classical approach for the construction.
    
    \item Derivation of the approximate version of the Bresch-Desjardins inequality (\ref{main_BD}). To this purpose, one needs to test the momentum equation by $\nabla\log\varrho$. The terms $\eta\nabla\varrho^{-6} + \delta\varrho\nabla\Delta^3\varrho$ provide that the density is strictly positive and that $\nabla\log\varrho$ is sufficiently regular in space. On the other hand, the parameter $\nu$  allows to differentiate the continuity equation and to deduce that $\nabla\log\varrho$ is also sufficiently regular in time to be used as a test function.
    
    \item Passage to the limit with $\varepsilon,\nu,\eta$ and $\delta$. Having derived the estimate (\ref{main_BD}), the improved regularity of $\varrho$ allows to pass to the limit with consecutive regularizing parameters. The proof of Theorem \ref{main_th} up to this point is pretty standard, and is only sketched in Sections \ref{ssect_mu_eps}-\ref{ssect_eta_delta}.
    
    \item Derivation of the approximate Meller-Vasseur inequality (\ref{main_MV}), uniformly with the size of the torus. This is the key step of the proof. We employ the approximating procedure introduced in \cite{vasseur-yu2}, however due to the presence of nonlocal terms, we need different arguments to close the estimates. The estimate is derived by renormalization of the momentum equation. The overall idea is to test the equation by a function $F'(|u|)\frac{u}{|u|}=(1+\ln(1+|u|^2))u$. Since it is not an admissible test function, we introduce suitable approximation of $F$, and in place of $u$ we put $v=\phi_m^0(\varrho)\phi_k^\infty(\varrho)u$, where $\phi_m^0$ and $\phi_k^\infty$ cut off the density at zero and infinity respectively. Then, passing with $m$ and $k$ to $\infty$, we derive the desired estimate in the limit. Bounding together the parameters $\kappa$ and $k$, by deriving the estimate we simultaneously pass to the limit with $\kappa$ as well. 

    The biggest challenge here is to deal with the attractive part of the kernel $K$, since on the whole space it is not integrable with any power. Because of that we are not able to follow the arguments from \cite{vasseur-yu2}. Instead, we apply the weak version of Gronwall's lemma and use generalized Young inequality for convex functions.

    \item Passage to the limit with $r_0$ and $r_1$, contained in Section \ref{r_sect}. This is the final limit passage on the torus. The main issue is that although the density-dependent viscous stress tensor provides extra regularity for the density (via the Bresch-Desjardins estimate), it gives no information on $u$ itself on the set where $\varrho=0$. Without the extra friction terms, we end up with very little regularity of the velocity. However, having the estimate (\ref{main_MV}), following the arguments from \cite{mellet-vasseur} we are able to show strong convergence of $\sqrt{\varrho}u$, which combined with compactness properties of the density allows to still perform the limit passage.
    
    \item Expansion of the torus. In the previous steps we needed to restrict our domain to the torus $\T_L^3\sim [-L,L]^3$. The last part of the proof is to pass to the limit with $L\to\infty$ and in consequence to obtain the solutions on the whole space $\R^3$. The previously derived estimates are uniform in $L$ and thus allow to extend our solution. During this limit passage we also lose the compactness of the nonlocal term. Nonetheless, the energy inequality provides the estimate on a double second moment 
    \[ \iint_{\R^{3\times 3}}|x-y|^2\varrho(t,x)\varrho(t,y)\;\dd x\,\dd y, \]
    which allows to control the behaviour of the density far from the origin, and in consequence pass to the limit in the nonlocal term as well.
    
\end{enumerate}

For the reader's convenience, Table \ref{tabelka} contains a list with all parameters, together with its short descriptions.

\begin{table}[h]
    \centering
    \begin{tabularx}{\textwidth}{|l|X|}

    \hline
      &  \\[-8pt]
      $\varepsilon$ & Necessary for the Faedo-Galerkin approximation \\[4pt]

      \hline
      &  \\[-8pt]
      $\nu$ & Provides that $\partial_t\nabla\log\varrho\in L^2(0,T;L^2)$, which is needed  for computing the Bresch-Desjardins estimate \\[15pt]
      
      \hline
       &  \\[-8pt]       
       $\eta,\delta$  & Provide that $\frac{1}{\varrho}$ is bounded and that $\nabla\log\varrho\in L^2(0,T;H^2)$, i.e. is a suitable test function (together with time regularity) \\[17pt]

       \hline
       &  \\[-8pt]
       $\kappa$ & provides that $\sqrt{\varrho}\in L^2(0,T;H^2)$ and $\nabla\varrho^{1/4}\in L^4(0,T;L^4)$, which is necessary for successful renormalization of the momentum equation \\[14pt]

       \hline
       &  \\[-8pt]
       $r_0,r_1$ & Provide improved integrability of $u$; combined with estimates coming from $\kappa$ enable to renormalize the momentum equation \\[14pt]

       \hline
       &  \\[-8pt]
       $m,k$ & The cutoff function $\phi_m^0(\varrho)$ cuts the area when $\varrho<\frac{1}{m}$ and $\phi_k^\infty(\varrho)$ when $\varrho>k$. They appear in the proof of the Mellet-Vasseur inequality and are the additional levels of approximation needed for renormalization of the momentum equation \\[29pt]

       \hline
       &  \\[-8pt]
       $L$ & indicates the size of the torus. By taking $L\to\infty$, we obtain the solution on the whole space \\[14pt]

       \hline
       
    \end{tabularx}
    \caption{Description of  parameters appearing in the paper}
    \label{tabelka}
\end{table}

\section{Fundamental level of approximation}\label{appr_sect}
The aim of this section is to construct the solution $(\varrho_L,u_L)$ of system \eqref{approx} on 
 the torus $\T_L^3\sim [-L,L]^3$. This is done by means of Galerkin approximation and the fixed point theorem.
\subsection{Truncation to periodic domain}\label{trunc_sect}
We begin by modification of  kernel $K$ in a way that allows $K\ast\varrho$ to be well-defined on the torus.

Let $\phi_L\geq 0$ be a radial, decreasing cut-off function such that $\mathrm{supp\;}\phi_L\subset B(0,L)$, $\phi_L(x)\equiv 1$ for $|x|<\frac{L}{2}$ and
\eq{\label{prop:phi}
|\nabla\phi_L|\leq \frac{C}{L}, \quad |\Delta\phi_L|\leq \frac{C}{L^2}. }
Then we simply put 
\[ K_L = K\phi_L. \]
In a similar way we  prepare the initial conditions. We put
\[ \sqrt{\varrho_{0,L}} = \sqrt{\varrho_0}\phi_L, \]
where $\phi_L$ is defined above, then periodize. In consequence we obtain the initial condition on the torus $\T_L^3$, satisfying:

\begin{lem}\label{initial}
    The function $\sqrt{\varrho_{0,L}}$ satisfies the following properties:
    \[ \|\nabla\sqrt{\varrho_{0,L}}\|_{L^2(\T_L^3)} \leq \|\nabla\sqrt{\varrho_0}\|_{L^2(\R^3)} + \frac{C}{L}\|\varrho_0\|_{L^1(\R^3)}^{1/2}, \]

    \[ \iint_{\T_L^3\times\T_L^3}\varrho_{0,L}(x)\varrho_{0,L}(y)K_L(x-y) \;\dd x\,\dd y \leq \iint_{\R^3\times\R^3}\varrho_0(x)\varrho_0(y)K(x-y) \;\dd x\,\dd y \]
    and
    \[ \varrho_{0,L} \to \varrho_0 \quad \text{in} \quad L^1(\R^3). \]
\end{lem}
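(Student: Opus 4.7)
The three claims are all rather elementary once one makes the following observation: since $\mathrm{supp}\,\phi_L \subset B(0,L) \subset [-L,L]^3$ sits strictly inside the fundamental domain of $\T_L^3 \sim [-L,L]^3$, the periodization of $\sqrt{\varrho_0}\phi_L$ has non-overlapping copies, and hence an integral over $\T_L^3$ of $\varrho_{0,L}$ (or of products involving $\varrho_{0,L}$ and the localized kernel $K_L$, whose support lies in $B(0,L)$ as well) can be identified with the integral of $\varrho_0 \phi_L^2$ over $[-L,L]^3 \subset \R^3$. With this in mind, the proof reduces to direct manipulations using the bounds \eqref{prop:phi} on $\phi_L$.

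For the first estimate, I would apply the product rule
\[
\nabla\sqrt{\varrho_{0,L}} = \phi_L \nabla\sqrt{\varrho_0} + \sqrt{\varrho_0}\,\nabla\phi_L,
\]
take the $L^2(\T_L^3)$-norm, use the triangle inequality, and then use $0 \leq \phi_L \leq 1$ and $|\nabla\phi_L| \leq C/L$ from \eqref{prop:phi} to bound the two terms by $\|\nabla\sqrt{\varrho_0}\|_{L^2(\R^3)}$ and $(C/L)\|\sqrt{\varrho_0}\|_{L^2(\R^3)} = (C/L)\|\varrho_0\|_{L^1(\R^3)}^{1/2}$ respectively.

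For the second estimate, pointwise on the fundamental domain $\varrho_{0,L}(x) = \varrho_0(x)\phi_L^2(x) \leq \varrho_0(x)$ and $K_L(x-y) = K(x-y)\phi_L(x-y) \leq K(x-y)$ (using $K \geq 0$ and $\phi_L \in [0,1]$). Therefore the integrand on $\T_L^3\times\T_L^3$ is dominated pointwise by $\varrho_0(x)\varrho_0(y)K(x-y)$, and extending the domain of integration from $[-L,L]^3\times[-L,L]^3$ to $\R^3\times\R^3$ (which only adds a nonnegative quantity) yields the inequality.

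For the third statement, I would view $\varrho_{0,L}$ as the function $\varrho_0\phi_L^2 \in L^1(\R^3)$ (extended by zero outside $B(0,L)$). One has the pointwise convergence $\phi_L^2(x) \to 1$ as $L\to\infty$, since $\phi_L(x) = 1$ whenever $|x| < L/2$, together with the uniform domination $0 \leq \varrho_0 \phi_L^2 \leq \varrho_0 \in L^1(\R^3)$, so that the dominated convergence theorem immediately gives $\varrho_{0,L}\to\varrho_0$ in $L^1(\R^3)$. There is no real obstacle in the proof; the only point requiring care is the bookkeeping between the $\T_L^3$ and $\R^3$ viewpoints, but the disjoint-support observation above makes this identification harmless.
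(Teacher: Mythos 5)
Your proof is correct and follows essentially the same route as the paper's: product rule plus the bounds \eqref{prop:phi} for the gradient estimate, pointwise domination $\varrho_0\phi_L^2 \leq \varrho_0$ and $K\phi_L \leq K$ with enlargement of the domain for the kernel estimate, and dominated convergence for the $L^1$ limit. The only addition is your explicit remark that the periodization is harmless because the supports sit inside the fundamental domain, which the paper leaves implicit.
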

\begin{proof}
    The proof follows straight from the definition of $\varrho_{0,L}$. First, we have
    \[ \nabla\sqrt{\varrho_{0,L}} = \nabla\sqrt{\varrho_0}\phi_L + \sqrt{\varrho_0}\nabla\phi_L \]
    and thus
    \[ \|\nabla\sqrt{\varrho_{0,L}}\|_{L^2(\T_L^3)} \leq \|\nabla\sqrt{\varrho_0}\|_{L^2(\R^3)} + \frac{C}{L}\|\varrho_0\|_{L^1(\R^3)}^{1/2}. \]
    The next estimate follows immediately, since
    \[ \iint_{\T_L^3\times\T_L^3}\varrho_0(x)\varrho_0(y)K(x-y)\phi_L^2(x)\phi_L^2(y)\phi_L(x-y) \;\dd x\,\dd y \leq \iint_{\R^3\times\R^3}\varrho_0(x)\varrho_0(y)K(x-y) \;\dd x\,\dd y \]
    by integrating over a larger domain and estimating $\phi_L$ by $1$.
    The convergence in $L^1$ follows immediately from the dominated convergence theorem, since $\phi_L\to 1$ pointwise.
\end{proof}
With the definition of $\varrho_{0,L}$, we can also define properly the initial conditions on $u$. Defining
\[ u_{0,L}(x)=0 \quad \text{for} \quad \varrho_{0,L}(x)=0; \quad u_{0,L}(x)=u_0(x) \quad \text{otherwise}, \]
we can periodize it in the same way as $\varrho_{0,L}$, and moreover
\[ \int_{\T_L^3}\varrho_{0,L}|u_{0,L}|^2\;\dd x \leq \int_{\R^3}\varrho_0|u_0|^2\;\dd x. \]
The full approximated system on the torus is then given by \eqref{approx} and it is supplemented with the initial condition:
\eq{u_{|_{t=0}}=u_{0,L},\quad \varrho_{|_{t=0}}=\tilde\varrho_0 := \varrho_{0,L}\ast\xi_{\bar\delta} + \frac{1}{m_1},}
where $m_1>0$, $\xi_{\bar\delta}$ is the standard mollifier on the torus, and we choose $\bar\delta$ depending on $\delta$ such that $\delta\|\nabla\Delta\tilde\varrho_0\|_{L^2(\T_L^3)}^2\to 0$. 

\subsection{The Galerkin approximation}
We solve the system (\ref{approx}) using the Galerkin approximation. Let $(e_i)_{i\in\mathbb{N}}$ be a suitable basis of $H^2(\T_L^3)$ and set $X_N:= \{e_1,\dots,e_N\}$. We put 
$u_N(t,x) = \sum_{i=1}^N \lambda_i(t)e_i(x).$
Moreover, let $S\colon C(0,T;X_N)\to C(0,T;C^k)$ be such that $\varrho=S(u)$ solves
\[ \varrho_t+\ddiv(\varrho u) -\varepsilon\Delta\varrho = 0, \quad \varrho_{|_{t=0}}=\varrho_0. \]
Then, we construct the solution by applying the Schauder fixed point theorem for the operator
\[ \mathfrak{M}^{-1}[S(u_N)](t)\left(\mathfrak{M}[\varrho_0](u_0)+\int_0^T\mathfrak{N}(S(u_N),u_N)(s)\dd s\right), \]
where $\mathfrak{M}[\varrho]\colon X_N\to X^*_N$ is given by
$\displaystyle \langle \mathfrak{M}[\varrho]u,w\rangle = \int_{\T_L^3}\varrho u\cdot w \;\dd x $
and 
\begin{multline*} \mathfrak{N}(\varrho,u) = -\ddiv(\varrho u\otimes u) + \ddiv(\varrho\D u) - \varrho\nabla(K_L\ast\varrho) \\
-r_0u - r_1\varrho|u|^2u + \kappa\varrho\nabla\left(\frac{\Delta\sqrt{\varrho}}{\sqrt{\varrho}}\right) -\varepsilon\nabla\varrho\cdot\nabla u -\nu\Delta^2u + \eta\nabla\varrho^{-6} + \delta\varrho\nabla\Delta^3\varrho. \end{multline*}
As a result, we obtain a smooth solution $(\varrho_N,u_N)$, corresponding to initial  $\tilde\varrho_0$ and
$ u_{0,N} = \sum_{i=1}^N \langle u_0,e_i\rangle. $

\subsection{Energy estimate}
Testing the momentum equation of (\ref{approx}) by $u_N$ and using the approximate continuity equation, we get that the following equality is satisfied uniformly in $N$
\eq{\label{energy0}
    \frac{\dd}{\dd t}E(\varrho_N,u_N) &+ \nu\int_{\T_L^3}|\Delta u_N|^2\dd x + \int_{\T_L^3}\varrho_N|\D u_N|^2\dd x + \varepsilon\delta\int_{\T_L^3}|\Delta^2\varrho_N|^2\dd x\\
   & + \frac{2}{3}\varepsilon\eta\int_{\T_L^3}|\nabla\varrho_N^{-3}|^2\dd x 
    + r_0\int_{\T_L^3}|u_N|^2\dd x + r_1\int_{\T_L^3}\varrho_N|u_N|^4\dd x \\
    &+ \kappa\varepsilon\int_{\T_L^3}\varrho_N|\nabla^2\log\varrho_N|^2\dd x +\varepsilon\int_{\T_L^3}\nabla(K_L\ast\varrho_N)\cdot\nabla\varrho_N \dd x = 0,
}
where
\[ E(\varrho,u) = \int_{\T_L^3}\left(\frac{1}{2}\varrho|u|^2 + \frac{1}{2}\varrho(K_L\ast\varrho) + \frac{\eta}{7}\varrho^{-6} + \frac{\kappa}{2}|\nabla\sqrt{\varrho}|^2\dd x +\frac{\delta}{2}|\nabla\Delta\varrho|^2 \right) \dd x.  \]

To deduce useful bounds from this equality, we extract certain estimates from the nonlocal term $\varepsilon\int_{\T_L^3}\nabla(K_L\ast\varrho_N)\cdot\nabla\varrho_N \dd x$, which are a consequence of the following Lemma:
\begin{lem}\label{KL_lem1}
For a sufficiently smooth $\varrho$, we have
\begin{equation}\label{oszac_KL2}
\begin{aligned} \int_{\T_L^3}\nabla(K_L\ast\varrho)\cdot\nabla\varrho \;\dd x \geq -C\|\varrho\|_{L^1(\T_L^3)}^2
\end{aligned}\end{equation}
for some $C>0$ not depending on $L$.
\end{lem}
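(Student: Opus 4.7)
The plan is to reduce the question to a pointwise-type bound on $\Delta K_L$. Since $\varrho$ is smooth and we are working on the torus, there are no boundary terms when integrating by parts, and
\[
\int_{\T_L^3} \nabla(K_L \ast \varrho) \cdot \nabla \varrho \;\dd x = -\int_{\T_L^3} (\Delta K_L \ast \varrho)\varrho \;\dd x = -\iint_{\T_L^3 \times \T_L^3} \Delta K_L(x-y)\varrho(x)\varrho(y) \;\dd x\,\dd y.
\]
It therefore suffices to show that $\Delta K_L$ admits a controlled upper bound when paired with $\varrho \otimes \varrho$. By the Leibniz rule,
\[
\Delta K_L = \phi_L \Delta K + 2 \nabla K \cdot \nabla \phi_L + K \Delta \phi_L,
\]
and I would handle the three pieces separately, splitting $K = |x|^{-\alpha} + \tfrac{1}{2}|x|^2$ into its singular and quadratic parts.

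The two cutoff-correction terms $2\nabla K \cdot \nabla \phi_L$ and $K\Delta \phi_L$ are supported in the annulus $\{L/2 \leq |x| \leq L\}$. On this set the singular part of $K$ is tame (bounded by $(L/2)^{-\alpha}$ with analogous bounds on its gradient), while the quadratic part satisfies $|K| \leq CL^2$ and $|\nabla K| \leq CL$; combined with the bounds $|\nabla\phi_L| \leq C/L$ and $|\Delta\phi_L| \leq C/L^2$ from \eqref{prop:phi}, this gives a uniform-in-$L$ estimate $\|2\nabla K \cdot \nabla \phi_L + K\Delta\phi_L\|_{L^\infty(\T_L^3)} \leq C$. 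Hence these two terms contribute at most $C\|\varrho\|_{L^1}^2$ in absolute value. For the bulk term $\phi_L \Delta K$, one computes pointwise, away from the origin, $\Delta K(x) = \alpha(\alpha-1)|x|^{-\alpha-2} + 3$; the constant piece $3\phi_L$ is bounded and contributes another $3\|\varrho\|_{L^1}^2$.

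What remains is the singular piece $\alpha(\alpha-1)|x|^{-\alpha-2}\phi_L$. For $\alpha \in (0,1]$ the coefficient $\alpha(\alpha-1)$ is non-positive and the integrand $\alpha(\alpha-1)|x-y|^{-\alpha-2}\phi_L(x-y)\varrho(x)\varrho(y)$ has the favorable sign, so its contribution to $-\iint \Delta K_L\, \varrho\varrho$ is already non-negative and no further work is needed. The hard case is $\alpha \in (1,2)$, where the coefficient is positive and $|x|^{-\alpha-2}$ is not even locally integrable; this is the main obstacle. To close it, I would rely on the Fourier-side positivity $\widehat{-\Delta |x|^{-\alpha}}(\xi) = c|\xi|^{\alpha-1} \geq 0$, which via Plancherel makes the Riesz-type form $-\iint \Delta|x|^{-\alpha}(x-y)\varrho(x)\varrho(y)\,\dd x\,\dd y$ non-negative on $\R^3$; restoring the cutoff costs a remainder involving $(1-\phi_L)\Delta|x|^{-\alpha}$, which is supported on $\{|x| \geq L/2\}$ and bounded there by $CL^{-\alpha-2}$, so its contribution is again of order $\|\varrho\|_{L^1}^2$. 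The delicate point is transferring this whole-space Fourier argument to the torus; since $K_L$ is supported in $B(0,L) \subset (-L,L)^3$, the torus convolution against $K_L$ coincides with the $\R^3$ convolution against the periodized $\varrho$, which makes the transfer possible.
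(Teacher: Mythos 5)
Your treatment of the cutoff corrections on the annulus $\{L/2\le|x|\le L\}$, of the quadratic part, and of the range $\alpha\in(0,1]$ coincides with the paper's Case 1 (the paper additionally records that at $\alpha=1$ the distributional Laplacian is $-4\pi\delta_0$ rather than $0$, but that term has the favourable sign, so your conclusion is unaffected). The problem is the case $\alpha\in(1,2)$, where your argument has two gaps. First, the identity you start from,
\[
\int_{\T_L^3}\nabla(K_L\ast\varrho)\cdot\nabla\varrho\,\dd x=-\iint\Delta K_L(x-y)\varrho(x)\varrho(y)\,\dd x\,\dd y
\]
with $\Delta K_L$ replaced by its pointwise Leibniz expansion, cannot be split term by term: for $\alpha>1$ the piece $\iint\alpha(\alpha-1)|x-y|^{-\alpha-2}\phi_L(x-y)\varrho(x)\varrho(y)\,\dd x\,\dd y$ is a divergent integral (the kernel is not locally integrable near the diagonal), so $\Delta(\phi_L|x|^{-\alpha})$ must be kept as a distribution (a finite part) and cannot be paired with $\varrho\otimes\varrho$ via the pointwise formula. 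Second, your proposed fix --- decompose $\phi_L\Delta(|x|^{-\alpha})=\Delta(|x|^{-\alpha})-(1-\phi_L)\Delta(|x|^{-\alpha})$ and apply $\R^3$ Plancherel to the first piece --- does not transfer to the torus as claimed. The equivalence ``torus convolution $=$ $\R^3$ convolution against the periodized density'' holds only for kernels supported in the fundamental cell; after your decomposition the piece $\Delta(|x|^{-\alpha})$ is no longer compactly supported, so its torus quadratic form is a pairing against the periodization of $\varrho$, which is not in $L^2(\R^3)$, and the whole-space Plancherel positivity does not apply to it.

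The paper avoids both issues by never taking the Laplacian of the singular part: it keeps the quadratic form at the level $\int_{\T_L^3}\nabla\varrho\cdot\bigl(\tfrac{\phi_L}{|\cdot|^{\alpha}}\ast\nabla\varrho\bigr)\,\dd x$, proves directly (via an explicit spherical-coordinates computation, using that $r\mapsto r\cdot\phi_L(r)r^{-\alpha}$ is decreasing for $\alpha\ge1$) that the compactly supported kernel $\phi_L|x|^{-\alpha}$ has a positive Fourier transform, and then concludes positivity of the torus form by Parseval for Fourier series. If you want to salvage your route you would need a Poisson-summation argument showing that the Fourier coefficients of the periodization of the finite-part distribution $-\Delta(|x|^{-\alpha})$ are nonnegative; the paper's argument is the elementary way around this.
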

\begin{proof}
We consider two cases, depending on $\alpha$.\\ 

\noindent{\emph{Case 1.}  If $\alpha\leq 1$, note that
\[ \int_{\T_L^3}\nabla(K_L\ast\varrho)\cdot\nabla\varrho \;\dd x = -\iint_{\T_L^3\times\T_L^3}\varrho(x)\varrho(y)\Delta(K\phi_L)(x-y)\dd x\,\dd y.
\]
We have $\Delta(K\phi_L) = \Delta K\phi_L + 2\nabla K\cdot\nabla\phi_L + K\Delta\phi_L. $
Further note that $\Delta\left(\frac{1}{2}|x|^2\right)=3$, and 
\[ \Delta\left(\frac{1}{|x|^\alpha}\right)=-\frac{\alpha(1-\alpha)}{|x|^{\alpha+2}}, \quad \text{for} \quad \alpha<1 \quad \text{and} \quad \Delta\left(\frac{1}{|x|}\right)=-4\pi\delta_0 \quad \text{for} \quad \alpha=1. \]
Putting it all together, for $\alpha < 1$ we get
%
\begin{equation}\label{oszac_KL}
\begin{aligned} \int_{\T_L^3}\nabla(K_L\ast\varrho)\cdot\nabla\varrho \;\dd x &= \alpha(1-\alpha)\iint_{\T_L^3\times\T_L^3}\varrho(x)\varrho(y)\frac{\phi_L(x-y)}{|x-y|^{\alpha+2}}\;\dd x\,\dd y \\
&- 3\iint_{\T_L^3\times\T_L^3}\varrho(x)\varrho(y)\phi_L(x-y)\,\dd x\,\dd y \\
&- 2\iint_{T_L^3\times\T_L^3}\varrho(x)\varrho(y)\left[-\alpha\frac{x-y}{|x-y|^{\alpha+2}} + x-y\right]\nabla\phi_L(x-y)\,\dd x\,\dd y \\
&- \iint_{\T_L^3\times\T_L^3}\varrho(x)\varrho(y)\left[\frac{1}{|x-y|^\alpha}+\frac{|x-y|^2}{2}\right]\Delta\phi_L(x-y)\, \dd x\,\dd y,
\end{aligned}\end{equation}
whereas if $\alpha=1$ the first term gets replaced by $\displaystyle 4\pi\int_{\T_L^3}\varrho^2\;\dd x$.
From the assumptions (\ref{prop:phi}) on $\phi_L$, we have $|\phi_L|\leq 1$, 
\[ \left|-\alpha\frac{x-y}{|x-y|^{\alpha+2}}+x-y\right||\nabla\phi_L(x-y)| \leq C\mathbbm{1}_{\{\frac{L}{2}<|x-y|<L\}}\left(\frac{1}{L^{\alpha+2}}+1\right) \leq C \]
and 
\[ \left(\frac{1}{|x-y|^\alpha}+\frac{|x-y|^2}{2}\right)|\Delta\phi_L(x-y)| \leq C\mathbbm{1}_{\{\frac{L}{2}<|x-y|<L\}}\left(\frac{1}{L^{\alpha+2}}+1\right) \leq C. \]
Applying these estimates to the last three terms in (\ref{oszac_KL}), we derive (\ref{oszac_KL2}).\\

\noindent{\emph Case 2.} In the case $\alpha>1$, we use the fact that $\mathcal{F}(\frac{\phi_L}{|x|^\alpha})$ is positive, where by $\mathcal{F}$ we denote the Fourier transform of $f$ on $\R^3$ or $\T_L^3$ respectively, i. e.
\[ \mathcal{F}(f)(\xi)=\hat{f}(\xi)=\int_{\R^3}e^{-2\pi i\xi\cdot x}f(x)\;\dd x, \;\xi\in\R^3 \quad \text{or} \quad \hat{f}(k)=\int_{\T_L^3} e^{-2\pi k\cdot x}f(x)\;\dd x, \; k\in\mathbb{Z}^3. \]
We have the following proposition.
\begin{prop}\label{fourier}
    Let $F\in L^p_{loc}(\R^3)$ be positive, such that $F(x)=f(|x|)$ with $rf(r)$ decreasing for $r>0$ and $\lim_{r\to\infty}rf(r)=0$. Then $\hat{F}$ is positive.
\end{prop}
\begin{proof}
    Using spherical coordinates, we get
    \[ \hat{F}(\xi) = \int_0^\infty\!\!\!\int_0^{2\pi}\!\!\!\int_0^\pi e^{-ir|\xi|\cos\theta}r^2f(r)\sin\theta\;\dd\theta\,\dd\varphi\,\dd r = \frac{4\pi}{|\xi|}\int_0^\infty rf(r)\sin(r|\xi|)\;\dd r, \]
    and the integral $\int_0^\infty rf(r)\sin(r|\xi|)\;\dd r$ is convergent and positive from the assumptions on $rf(r)$.
\end{proof}
The positivity of Fourier transform allows us in turn to show the positivity of the integral operator.
\begin{prop}
    If $K$ is a radially symmetric kernel with support in $[-L,L]^d$ and positive Fourier transform, then for any sufficiently regular function with period $2L$ we have
    \[ \int_{\T_L^3}f(x)\int_{\R^3}f(y)K(x-y)\;\dd x\,\dd y \geq 0. \]
\end{prop}
\begin{proof}
    The assertion follows straight from the identity
    \[ \int_{\T_L^3}f(x)g(x)\;\dd x = (f(-\cdot)\ast g)(0)=\sum_{k\in\mathbb{Z}^3}\mathcal{F}(f(-\cdot)\ast g)(k) = \sum_{k\in\mathbb{Z}^3}\hat{f}(-k)\hat{g}(k). \]
    In our case, it translates to
    \[\begin{aligned}
        \int_{\T_L^3}f(x)\int_{\R^3}f(y)K(x-y)\;\dd y\,\dd x &= \sum_{k\in\mathbb{Z}^d}\hat{f}(-k)\mathcal{F}(K\ast f)(k) \\
        &= \sum_{k\in\mathbb{Z}^d}\hat{K}(k)\hat{f}(-k)\hat{f}(k) = \sum_{k\in\mathbb{Z}^d}\hat{K}(k)|\hat{f}(k)|^2 \geq 0.
    \end{aligned}\]
\end{proof}
By virtue of Proposition \ref{fourier} and the assumptions on $\phi_L$, the kernel $\frac{\phi_L(x)}{|x|^\alpha}$ has positive Fourier transform. Then in particular
\[ \int_{\T_L^3}\nabla\varrho\cdot\nabla\left(\frac{\phi_L(\cdot)}{|\cdot|^\alpha}\ast\varrho\right) \;\dd x = \int_{\T_L^3}\nabla\varrho(x)\int_{\R^3}\nabla\varrho(y)\frac{\phi_L(x-y)}{|x-y|^\alpha}\;\dd y\;\dd x \geq 0. \]
Dealing with the quadratic part of $K_L$ in the same way as in the case $\alpha\leq 1$, we end the proof of Lemma \ref{KL_lem1}.}
\end{proof}
\begin{rem}
Note that the BD estimate holds for all $\alpha\in (0,3)$. Moreover, for $\alpha>1$ one can derive the following estimate
\begin{equation}
\begin{aligned}
\int_{\T_L^3}\nabla(K_L\ast\varrho)\cdot\nabla\varrho\;\dd x \geq & \frac{\alpha(\alpha-1)}{2}\|\varrho\|_{\dot H^{\frac{\alpha-1}{2}}(\T_L^3)}^2 -\frac{C}{L^{\alpha+2}}\|\varrho\|_{L^2(\T_L^3)}^2 -3\|\varrho\|_{L^1(\T_L^3)}^2 \\
&- C\iint_{\T_L^3\times\T_L^3}\varrho(x)\varrho(y)\mathbbm{1}_{\{(x,y): \frac{L}{2}<|x-y|<L\}} \dd x\,\dd y,
\end{aligned}\end{equation}
where
\[ \|\varrho\|_{\dot{H}^{\frac{\alpha-1}{2}}(\T_L^3)}^2 = \iint_{\T_L^3\times\T_L^3}\frac{|\varrho(x)-\varrho(y)|^2}{|x-y|^{\alpha+2}}. \]
However, in the case $\alpha\geq 2$ we are not able to close the Mellet-Vasseur estimate that appears later in the proof.
\end{rem}

Thanks to Lemma \ref{KL_lem1}, we get from \eqref{energy0} the following energy estimate:
\eq{\label{energy_ap}
    \sup_{t\in[0,T]} E(\varrho_N,u_N) &+ \nu\int_0^T\!\!\!\int_{\T_L^3}|\Delta u_N|^2\dd x\,\dd t + \int_0^T\!\!\!\int_{\T_L^3}\varrho_N|\D u_N|^2\dd x\,\dd t + \varepsilon\delta\int_0^T\!\!\!\int_{\T_L^3}|\Delta^2\varrho_N|^2\dd x\, \dd t \\
    &+ \frac{2}{3}\varepsilon\eta\int_0^T\!\!\!\int_{\T_L^3}|\nabla\varrho_N^{-3}|^2\dd x\,\dd t + r_0\int_0^T\!\!\!\int_{\T_L^3}|u_N|^2\dd x\,\dd t + r_1\int_0^T\!\!\!\int_{\T_L^3}\varrho|u_N|^4\dd x\,\dd t \\
    &+ \kappa\varepsilon\int_0^T\!\!\!\int_{\T_L^3}\varrho_N|\nabla^2\log\varrho_N|^2\dd x\,\dd t \leq E(\varrho_0,u_0) + C\varepsilon T\|\varrho_0\|_{L^1(\T_L^3)}^2.
}

As the estimates in (\ref{energy_ap}) are satisfied for any $T<\infty$, we can extend the solution to the whole interval $[0,T]$ for any $T<\infty$. From (\ref{energy_ap}) we also extract the estimates to pass to the limit with $N\to\infty$. 
To converge in the term involving negative powers of the density, we use the following version of the Sobolev inequality:
\begin{lem} We have:
    \begin{equation}\label{sobolev} \|\varrho^{-1}\|_{L^\infty(\T_L^3)} \leq C(1+\|\varrho\|_{H^3(\T_L^3)})^2(1+\|\varrho^{-1}\|_{L^6(\T_L^3)})^3. \end{equation}
\end{lem}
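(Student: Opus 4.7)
\medskip
\noindent\textbf{Proof plan.} The plan is to reduce the $L^\infty$ bound on $\varrho^{-1}$ to an $H^2$ bound via Sobolev embedding, and then expand $\nabla^2(\varrho^{-1})$ via the chain rule so that every factor can be controlled either by $\|\varrho^{-1}\|_{L^6}$ or by $\|\varrho\|_{H^3}$.

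\medskip
\noindent\emph{Step 1: reduction via Sobolev embedding.} In dimension three one has the continuous embedding $H^2(\T_L^3)\hookrightarrow L^\infty(\T_L^3)$, so it suffices to prove
\[
\|\varrho^{-1}\|_{L^2}+\|\nabla(\varrho^{-1})\|_{L^2}+\|\nabla^2(\varrho^{-1})\|_{L^2}\leq C(1+\|\varrho\|_{H^3})^2(1+\|\varrho^{-1}\|_{L^6})^3 .
\]
The $L^2$ term is handled by H\"older on the torus: $\|\varrho^{-1}\|_{L^2}\leq C_L\|\varrho^{-1}\|_{L^6}$.

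\medskip
\noindent\emph{Step 2: chain-rule expansion.} Direct computation gives
\[
\nabla(\varrho^{-1})=-\varrho^{-2}\nabla\varrho,\qquad
\nabla^2(\varrho^{-1})=2\varrho^{-3}\nabla\varrho\otimes\nabla\varrho-\varrho^{-2}\nabla^2\varrho.
\]
Hence $|\nabla(\varrho^{-1})|\lesssim \varrho^{-2}|\nabla\varrho|$ and $|\nabla^2(\varrho^{-1})|\lesssim \varrho^{-3}|\nabla\varrho|^2+\varrho^{-2}|\nabla^2\varrho|$, and the problem becomes one of bounding products in $L^2$.

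\medskip
\noindent\emph{Step 3: H\"older estimates for each term.} I plan to keep $\varrho^{-1}$ in $L^6$ throughout and pay for the derivatives of $\varrho$ via the embedding $H^3(\T_L^3)\hookrightarrow W^{1,\infty}(\T_L^3)\cap W^{2,6}(\T_L^3)$ valid in three dimensions. For the first-order term,
\[
\|\varrho^{-2}\nabla\varrho\|_{L^2}\leq \|\varrho^{-1}\|_{L^6}^{2}\|\nabla\varrho\|_{L^6}\lesssim \|\varrho^{-1}\|_{L^6}^{2}\|\varrho\|_{H^3}.
\]
For the second-order terms,
\[
\|\varrho^{-2}\nabla^2\varrho\|_{L^2}\leq \|\varrho^{-1}\|_{L^6}^{2}\|\nabla^2\varrho\|_{L^6}\lesssim \|\varrho^{-1}\|_{L^6}^{2}\|\varrho\|_{H^3},
\]
\[
\|\varrho^{-3}|\nabla\varrho|^{2}\|_{L^2}\leq \|\varrho^{-1}\|_{L^6}^{3}\|\nabla\varrho\|_{L^\infty}^{2}\lesssim \|\varrho^{-1}\|_{L^6}^{3}\|\varrho\|_{H^3}^{2}.
\]

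\medskip
\noindent\emph{Step 4: assemble.} Summing these three bounds, and adding the trivial $L^2$ estimate from Step 1, yields
\[
\|\varrho^{-1}\|_{H^2}\leq C\bigl(1+\|\varrho^{-1}\|_{L^6}^{3}\bigr)\bigl(1+\|\varrho\|_{H^3}^{2}\bigr),
\]
which, after absorbing cross-terms into the product, gives the claimed inequality through $H^2\hookrightarrow L^\infty$.

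\medskip
There is no real obstacle: the argument is bookkeeping of H\"older exponents, the only point worth noting being that the exponents $2$ (for $\|\varrho\|_{H^3}$) and $3$ (for $\|\varrho^{-1}\|_{L^6}$) in the right-hand side are naturally produced by the two summands in $\nabla^2(\varrho^{-1})$, with the worst term being $\varrho^{-3}|\nabla\varrho|^2$ because it requires the full strength $\nabla\varrho\in L^\infty$, and thus $\varrho\in H^3$, rather than mere $H^2$ regularity.
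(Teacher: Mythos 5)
Your proposal is correct and follows essentially the same route as the paper: both expand $\nabla^2(\varrho^{-1})=2\varrho^{-3}\nabla\varrho\otimes\nabla\varrho-\varrho^{-2}\nabla^2\varrho$, estimate the two summands by H\"older exactly as you do (the paper writes $\|\varrho^{-2}\|_{L^3}\|\nabla^2\varrho\|_{L^6}+2\|\nabla\varrho\|_{L^\infty}^2\|\varrho^{-3}\|_{L^2}$, which coincides with your bounds), and conclude via $H^2(\T_L^3)\hookrightarrow L^\infty(\T_L^3)$. Your Step 1 and the explicit first-order term are just the lower-order bookkeeping the paper leaves to ``Sobolev embeddings.''
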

\begin{proof}
    We have
    \[ \nabla^2\varrho^{-1} = -\frac{1}{\varrho^2}\nabla^2\varrho + \frac{2}{\varrho^3}\nabla\varrho\otimes\nabla\varrho. \]
    Therefore
    \[\begin{aligned} \|\nabla^2\varrho^{-1}\|_{L^2(\Omega)} &\leq \|\varrho^{-2}\|_{L^3(\T_L^3)}\|\nabla^2\varrho\|_{L^6(\T_L^3)} + 2\|\nabla\varrho\|_{L^\infty(\T_L^3)}^2\|\varrho^{-3}\|_{L^2(\T_L^3)} \\
    &\leq C\|\varrho^{-1}\|_{L^6(\T_L^3)}^2\|\varrho\|_{H^3(\T_L^3)} + C\|\varrho\|_{H^3(\T_L^3)}^2\|\varrho^{-1}\|_{L^6(\T_L^3)}^3 \\
    &\leq C(1+\|\varrho\|_{H^3(\T_L^3)})^2(1+\|\varrho^{-1}\|_{L^6(\T_L^3)})^3,
    \end{aligned}\]
    which ends the proof by virtue of Sobolev embeddings.
\end{proof}
From (\ref{sobolev}) and (\ref{energy_ap}), we get that
\[ \varrho_N \geq C(\delta,\eta)>0. \]
With this argument at hand, the limit passage in the terms in \eqref{approx} is a straightforward application of Aubin--Lions lemma in suitable spaces, and thus we omit the details here. We refer to \cite{vasseur-yu} for the complete proof of convergence.

By the weak lower semicontinuity of convex functions, the limit  $(\varrho,u)$ also satisfies the energy estimate
\begin{equation}\label{energy}
\begin{aligned}
    \sup_{t\in[0,T]} E(\varrho,u) +& \nu\int_0^T\!\!\!\int_{\T_L^3}|\Delta u|^2\dd x\,\dd t + \int_0^T\!\!\!\int_{\T_L^3}\varrho|\D u|^2\dd x\,\dd t + \varepsilon\delta\int_0^T\!\!\!\int_{\T_L^3}|\Delta^2\varrho|^2\dd x\,\dd t \\
    +& \frac{2}{3}\varepsilon\eta\int_0^T\!\!\!\int_{\T_L^3}|\nabla\varrho^{-3}|^2\dd x + r_0\int_0^T\!\!\!\int_{\T_L^3}|u|^2\dd x\,\dd t + r_1\int_0^T\!\!\!\int_{\T_L^3}\varrho|u|^4\dd x\,\dd t \\
    +& \frac{\kappa\varepsilon}{2}\int_0^T\!\!\!\int_{\T_L^3}\varrho|\nabla^2\log\varrho|^2\dd x\,\dd t \leq E(\varrho_0,u_0) + C\varepsilon T\|\varrho_0\|_{L^1(\T_L^3)}^2.
    \end{aligned}
\end{equation}

\section{The BD inequality and limit passages with $\varepsilon,\nu,\delta,\eta\to 0$.}

Before we pass to the limit with the approximating parameters, we derive the Bresch--Desjardins inequality. To do that, we test the momentum equation by $\nabla\log\varrho$ and combine it with the energy inequality (\ref{energy}). In consequence, for
\[ E_{BD}(\varrho,u) = \int_{\T_L^3} \left(\frac{1}{2}\varrho\left|u+\frac{1}{\varrho}\nabla\varrho\right|^2 + \varrho(K_L\ast\varrho) + \frac{\delta}{2}|\nabla\Delta\varrho|^2 + \frac{\kappa}{2}|\nabla\sqrt{\varrho}|^2 + \frac{\eta}{7}\varrho^{-6}\right)\dd x \]
we obtain

\begin{equation}\label{BD1}
\begin{aligned}
        E_{BD} & (\varrho,u) + \frac{2}{3}\eta(1+\varepsilon)\int_0^T\!\!\!\int_{\T_L^3}|\nabla\varrho^{-3}|^2\dd x\,\dd t + \delta(1+\varepsilon)\int_0^T\!\!\!\int_{\T_L^3}|\Delta^2\varrho|^2\dd x\,\dd t \\
        &+ \frac{1}{4}\int_0^T\!\!\!\int_{\T_L^3}\varrho|\nabla u-\nabla^Tu|^2\dd x\,\dd t + \nu\int_0^T\!\!\!\int_{\T_L^3}|\Delta u|^2\dd x\,\dd t \\
        &+ r_0\int_0^T\!\!\!\int_{\T_L^3}|u|^2\;\dd x\,\dd t + r_1\int_0^T\!\!\!\int_{\T_L^3}\varrho|u|^4\;\dd x\,\dd t \\
        &+ \frac{\kappa(1+\varepsilon)}{2}\int_0^T\!\!\!\int_{\T_L^3}\varrho|\nabla^2\log\varrho|^2\dd x\,\dd t + \varepsilon\int_0^T\!\!\!\int_{\T_L^3}\frac{|\Delta\varrho|^2}{\varrho}\dd x + \int_0^T\!\!\!\int_{\T_L^3}\nabla(K_L\ast\varrho)\cdot\nabla\varrho \;\dd x \\
        \leq & E_{BD}(\varrho_0,u_0) + C\varepsilon T\|\varrho_0\|_{L^1(\T_L^3)}^2 \\
        &+ \varepsilon\int_0^T\!\!\!\int_{\T_L^3}\left(\nabla\varrho\cdot\nabla u\cdot\nabla\log\varrho + \Delta\varrho\frac{|\nabla\log\varrho|^2}{2} - \ddiv(\varrho u)\frac{1}{\varrho}\Delta\varrho\right)\dd x \\
        &- \nu\int_0^T\!\!\!\int_{\T_L^3}\Delta u\cdot\nabla\Delta\log\varrho \;\dd x - r_1\int_0^T\!\!\!\int_{\T_L^3}|u|^2u\nabla\varrho\;\dd x - r_0\int_0^T\!\!\!\int_{\T_L^3}\frac{u\cdot\nabla\varrho}{\varrho}\;\dd x \\
        =& E_{BD}(\varrho_0,u_0) + C\varepsilon T\|\varrho_0\|_{L^1(\T_L^3)}^2 + R_1+R_2+R_3+R_4+R_5+R_6.
        \end{aligned} \end{equation}
    The necessary calculations to derive (\ref{BD1}) are performed in the Appendix \ref{BD_app}.

    The terms $R_1$--$R_4$ go to $0$ as $\varepsilon,\nu\to 0$. For $R_5$, we have
\[\begin{aligned} 
R_5 =& -r_1\int_0^T\!\!\!\int_{\T_L^3}|u|^2u\cdot\nabla\varrho\;\dd x =
r_1\int_0^T\!\!\!\int_{\T_L^3}\varrho\ddiv(|u|^2u) \;\dd x \\
\leq & Cr_1\int_0^T\!\!\!\int_{\T_L^3}\varrho|u|^2|\nabla u|\;\dd x \leq Cr_1^2\int_0^T\!\!\!\int_{\T_L^3}\varrho|u|^4\dd x + \frac{1}{4}\int_0^T\!\!\!\int_{\T_L^3}\varrho|\nabla u|^2\dd x. 
\end{aligned}\]
Since 
\[ \int_0^T\!\!\!\int_{\T_L^3}\varrho|\nabla u|^2\;\dd x\,\dd t \leq \int_0^T\!\!\!\int_{\T_L^3}\varrho|\D u|^2\;\dd x + \frac{1}{2}\int_0^T\!\!\!\int_{\T_L^3}\varrho|\nabla u-\nabla^T u|^2\;\dd x\,\dd t, \]
the last term is further estimated by 
\[ \frac{1}{8}\int_0^T\!\!\!\int_{\T_L^3}\varrho|\nabla u-\nabla^Tu|^2\;\dd x\,\dd t + E(\varrho_0,u_0)+C\varepsilon T\|\varrho_0\|_{L^1(\T_L^3)}^2. \]
For $R_6$, we write
\[\begin{aligned} R_6 = -r_0\int_0^T\!\!\!\int_{\T_L^3}\frac{\ddiv(\varrho u)-\varrho\ddiv u}{\varrho}\;\dd x\,\dd t = r_0\int_0^T\!\!\!\int_{\T_L^3}\partial_t\log\varrho \;\dd x -\varepsilon r_0\int_0^T\!\!\!\int_{\T_L^3}\frac{\Delta\varrho}{\varrho}. \end{aligned}\]
Since $\varrho$ is bounded in $L^\infty(0,T;L^1)$, we have
$ r_0\int_0^T\!\!\!\int_{\T_L^3}\log_+\varrho \;\dd x \leq C. $
For the second term of $R_6$, we get
\[ \left|\varepsilon r_0\int_0^T\!\!\!\int_{\T_L^3}\frac{\Delta\varrho}{\varrho}\dd x\right| \leq CT\varepsilon r_0\|\varrho\|_{L^\infty(0,T;H^2)}\|\varrho^{-1}\|_{L^\infty([0,T]\times\T_L^3)}, \]
which also tends to $0$ as $\varepsilon\to 0$. In consequence, using additionally Lemma \ref{KL_lem1}, we get
\begin{equation}\label{BD}
    \begin{aligned}
       E_{BD}(\varrho,u) - & r_0\int_{\T_L^3}\log\varrho \;\dd x \\
        &+ \frac{2}{3}\eta(1+\varepsilon)\int_0^T\!\!\!\int_{\T_L^3}|\nabla\varrho^{-3}|^2\dd x\,\dd t + \delta(1+\varepsilon)\int_0^T\!\!\!\int_{\T_L^3}|\Delta^2\varrho|^2\dd x\,\dd t \\
        &+ \frac{1}{8}\int_0^T\!\!\!\int_{\T_L^3}\varrho|\nabla u-\nabla^Tu|^2\dd x\,\dd t + \nu\int_0^T\!\!\!\int_{\T_L^3}|\Delta u|^2\dd x\,\dd t \\
        &+ r_0\int_0^T\!\!\!\int_{\T_L^3}|u|^2\;\dd x\,\dd t + r_1\int_0^T\!\!\!\int_{\T_L^3}\varrho|u|^4\;\dd x\,\dd t \\
        &+ \frac{\kappa(1+\varepsilon)}{2}\int_0^T\!\!\!\int_{\T_L^3}\varrho|\nabla^2\log\varrho|^2\dd x\,\dd t + \varepsilon\int_{\T_L^3}\frac{|\Delta\varrho|^2}{\varrho}\dd x \\
        \leq & \sum_{i=1}^4 R_i + Cr_1^2\int_0^T\!\!\!\int_{\T_L^3}\varrho |u|^4\dd x\,\dd t +CT\varepsilon r_0\|\varrho\|_{L^\infty(0,T;H^2)}\|\varrho^{-1}\|_{L^\infty([0,T]\times\T_L^3)} \\
        &+ E_{BD}(\varrho_0,u_0) -r_0\int_{\T_L^3}\log\varrho_0\;\dd x + E(\varrho_0,u_0) + C T\|\varrho_0\|_{L^1(\T_L^3)}^2.
    \end{aligned}
\end{equation}

\subsection{Convergence lemmas}\label{conv_sect}
In this section, we present the tools which enable us to perform the limit passages in the following parts of the paper.

\begin{lem}\label{convergence1}
    Assume the sequence of functions $(\varrho_n,u_n)$ defined on $[0,T]\times\T_L^3$ satisfies
    \[ \|\partial_t\varrho_n\|_{L^2(0,T;L^{6/5})} + \|\partial_t(\varrho_n u_n)\|_{L^2(0,T;H^{-m})} \leq C \]
    for some $m\geq 1$,
    
    \begin{equation}\label{energy_pom}
        \sup_{t\in[0,T]}\int_{\T_L^3}\varrho_n|u_n|^2\;\dd x + \int_0^T\!\!\!\int_{\T_L^3} \varrho_n|\nabla u_n|^2\;\dd x\,\dd t + \int_0^T\!\!\!\int_{\T_L^3}|u_n|^2\;\dd x\,\dd t + \int_0^T\!\!\!\int_{\T_L^3}\varrho_n|u_n|^4\;\dd x\,\dd t \leq C,
    \end{equation}
    and
    \begin{equation}\label{BD_pom}
        \|\sqrt{\varrho_n}\|_{L^\infty(0,T;H^1)} \leq C
    \end{equation}
    uniformly in $n$. Then up to a subsequence
    \[\begin{aligned}
        \sqrt{\varrho_n} \rightharpoonup \sqrt{\varrho} &\quad \text{in} \quad L^2(0,T;H^1), \qquad
        u_n \rightharpoonup u &\quad \text{in} \quad L^2(0,T;L^2),
    \end{aligned}\]
    and
    \[\begin{aligned}
        \varrho_n\to\varrho &\quad \text{in} \quad C(0,T;L^{3/2}), \qquad
        \varrho_nu_n\to \varrho u &\quad \text{in} \quad L^2(0,T;L^{3/2}).
    \end{aligned}\]
    
    Moreover, if additionally $\partial_t\sqrt{\varrho_n}$ is bounded in $L^2(0,T;L^2)$ and 
    \[ \int_0^T\!\!\!\int_{\T_L^3}\varrho_n|\nabla^2\log\varrho_n|^2\;\dd x\,\dd t\leq C, \]
    then 
    \[ \sqrt{\varrho_n} \rightharpoonup \sqrt{\varrho} \quad \text{in} \quad L^2(0,T;H^2) \quad and \quad 
    \sqrt{\varrho_n} \to \sqrt{\varrho} \quad \text{in} \quad L^2(0,T;H^1). \]
\end{lem}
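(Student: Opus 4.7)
The weak convergences of $\sqrt{\varrho_n}$ in $L^2(0,T;H^1)$ and of $u_n$ in $L^2(0,T;L^2)$ are immediate from (\ref{BD_pom}), (\ref{energy_pom}) and the Banach--Alaoglu theorem, so the substance of the lemma lies in the strong convergences. For $\varrho_n \to \varrho$ in $C(0,T;L^{3/2})$ the plan is to upgrade the BD bound into a Sobolev bound on $\varrho_n$: the embedding $H^1(\T_L^3) \hookrightarrow L^6$ applied to $\sqrt{\varrho_n}$ yields $\varrho_n \in L^\infty(0,T;L^3)$, and the product rule $\nabla \varrho_n = 2\sqrt{\varrho_n}\nabla\sqrt{\varrho_n}$ combined with H\"older gives $\nabla \varrho_n \in L^\infty(0,T;L^{3/2})$. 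With the chain $W^{1,3/2} \hookrightarrow\hookrightarrow L^{3/2} \hookrightarrow L^{6/5}$ and the assumed control of $\partial_t \varrho_n$ in $L^2(0,T;L^{6/5})$, a version of the Aubin--Lions--Simon lemma producing continuity in time extracts the desired subsequential strong limit in $C(0,T;L^{3/2})$; uniqueness of weak limits identifies the limit as $\varrho$.

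For the more delicate convergence $\varrho_n u_n \to \varrho u$ in $L^2(0,T;L^{3/2})$, I would exploit the quartic bound $\int_0^T\!\!\int \varrho_n |u_n|^4 \leq C$ to upgrade $\sqrt{\varrho_n} u_n$ beyond its energy bound in $L^\infty(0,T;L^2)$: writing it as $\sqrt[4]{\varrho_n}\cdot(\sqrt[4]{\varrho_n} u_n)$ and using $\sqrt[4]{\varrho_n} \in L^\infty(0,T;L^{12})$ (a consequence of $\sqrt{\varrho_n}\in L^\infty(0,T;L^6)$), H\"older yields $\sqrt{\varrho_n}u_n \in L^4(0,T;L^3)$. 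Then the splitting
\[ \nabla(\varrho_n u_n) = 2(\sqrt{\varrho_n}u_n) \otimes \nabla\sqrt{\varrho_n} + \sqrt{\varrho_n}\cdot(\sqrt{\varrho_n}\nabla u_n) \]
places the first term in $L^4(0,T;L^{6/5})$ via $L^4(0,T;L^3)\cdot L^\infty(0,T;L^2)$, and the second in $L^2(0,T;L^{3/2})$ via $L^\infty(0,T;L^6)\cdot L^2(0,T;L^2)$, so $\varrho_n u_n$ is bounded in $L^2(0,T;W^{1,6/5})$. Since $6/5>1$, the embedding $W^{1,6/5} \hookrightarrow\hookrightarrow L^{3/2}$ is compact, so combining with the assumed bound on $\partial_t(\varrho_n u_n)$ in $L^2(0,T;H^{-m})$, Aubin--Lions delivers strong convergence in $L^2(0,T;L^{3/2})$, with the limit identified as $\varrho u$ by the already established convergences.

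For the second, stronger statement, the J\"ungel--Matthes inequality (relying on the algebraic identity $\sqrt{\varrho_n}\nabla^2\log\varrho_n = 2\nabla^2\sqrt{\varrho_n} - 2\nabla\sqrt{\varrho_n}\otimes\nabla\sqrt{\varrho_n}/\sqrt{\varrho_n}$ together with integration by parts) estimates $\int_0^T\!\!\int|\nabla^2\sqrt{\varrho_n}|^2$ from above by the assumed bound on $\int_0^T\!\!\int\varrho_n|\nabla^2\log\varrho_n|^2$ plus the $L^\infty(0,T;H^1)$ bound on $\sqrt{\varrho_n}$. This places $\sqrt{\varrho_n}$ in $L^2(0,T;H^2)$, so Banach--Alaoglu gives weak convergence in that space. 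Finally, the additional assumption $\partial_t\sqrt{\varrho_n}\in L^2(0,T;L^2)$ together with the compact chain $H^2 \hookrightarrow\hookrightarrow H^1 \hookrightarrow L^2$ permits one last application of Aubin--Lions to deduce strong convergence in $L^2(0,T;H^1)$.

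The main obstacle throughout is the $\varrho_n u_n$ step: without the quartic dissipation in (\ref{energy_pom}), the identity above would only yield $\nabla(\varrho_n u_n)$ in $L^2(0,T;L^1)$, and $W^{1,1} \hookrightarrow L^{3/2}$ is precisely the critical non-compact endpoint in three dimensions. This is why the $r_1$-friction term must be kept present at this level of the approximation.
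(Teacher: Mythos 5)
Your proposal is correct and follows essentially the same route as the paper: Banach--Alaoglu for the weak limits, the identity $\nabla\varrho_n=2\sqrt{\varrho_n}\nabla\sqrt{\varrho_n}$ plus Aubin--Lions--Simon for $\varrho_n$, the splitting of $\nabla(\varrho_nu_n)$ using the quartic bound on $\varrho_n^{1/4}u_n$ to land in $L^2(0,T;W^{1,6/5})$ for the momentum, and the J\"ungel inequality plus Aubin--Lions for the second part. Your factoring $2(\sqrt{\varrho_n}u_n)\otimes\nabla\sqrt{\varrho_n}$ versus the paper's $2\varrho_n^{1/4}\nabla\sqrt{\varrho_n}\otimes\varrho_n^{1/4}u_n$ is only a cosmetic regrouping yielding the same H\"older bookkeeping, and your closing remark about $W^{1,1}\hookrightarrow L^{3/2}$ being the critical non-compact endpoint correctly identifies why the $r_1$-term is kept at this stage.
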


\begin{proof}
The weak convergence of $\sqrt{\varrho_n}$ and $u_n$ follows straight from the Banach-Alaoglu Theorem. To prove strong convergence, we use the Aubin-Lions lemma. Note that $\nabla\varrho_n = 2\sqrt{\varrho_n}\,\nabla\!\sqrt{\varrho_n} $ 
and therefore
\[ \|\nabla\varrho_n\|_{L^\infty(0,T;L^{3/2})} \leq \|\sqrt{\varrho_n}\|_{L^\infty(0,T;L^6)}\|\nabla\sqrt{\varrho_n}\|_{L^\infty(0,T;L^2)} \leq C \]
by (\ref{BD_pom}) and the Sobolev embedding. Therefore from the Aubin-Lions-Simon lemma (see e.g. \cite{Simon})
\[ \varrho_n \to \varrho \quad \text{in} \quad C(0,T;L^{3/2}). \]
Since
\[
    \nabla(\varrho_nu_n) = \nabla\varrho_n\otimes u_n + \varrho_n\nabla u_n = 2\varrho_n^{1/4}\nabla\sqrt{\varrho_n}\otimes\varrho_n^{1/4}u_n + \sqrt{\varrho_n}\sqrt{\varrho_n}\nabla u_n,
\]
we have
\[\begin{aligned} \|\nabla(\varrho_nu_n)\|_{L^2(0,T;L^{6/5})} \leq & C\|\varrho_n\|_{L^\infty(0,T;L^3)}^{1/4}\|\nabla\sqrt{\varrho_n}\|_{L^\infty(0,T;L^2)}\|\varrho_n^{1/4}u_n\|_{L^4(0,T;L^4)} \\
&+ C\|\sqrt{\varrho_n}\|_{L^\infty(0,T;L^6)}\|\sqrt{\varrho_n}\nabla u_n\|_{L^2(0,T;L^2)} \leq C. \end{aligned}\]
Moreover,
\[ \|\varrho_nu_n\|_{L^\infty(0,T;L^{3/2})} \leq \|\sqrt{\varrho_n}\|_{L^\infty(0,T;L^6)}\|\sqrt{\varrho_n}u_n\|_{L^\infty(0,T;L^2)} \leq C \]
and thus again from the Aubin-Lions lemma
\[ \varrho_nu_n\to \varrho u \quad \text{in} \quad L^2(0,T;L^{3/2}). \]

    For the second part of the lemma, the estimate on $\varrho_n|\nabla^2\log\varrho_n|^2$ in particular yields
    \[ \|\sqrt{\varrho_n}\|_{L^2(0,T;H^2)} + \|\nabla\varrho_n^{1/4}\|_{L^4(0,T;L^4)} \leq C. \]
    This is the consequence of the following Proposition, proved in \cite{jungel}:
    \begin{prop}\label{prop_jungel}
For smooth $\varrho$, we have
\[ \int_\Omega \varrho|\nabla^2\log\varrho|^2\dd x \geq \frac{1}{7}\int_\Omega |\nabla^2\sqrt{\varrho}|^2\dd x  
\mbox{ \ \ and  \ \ }
 \int_\Omega \varrho|\nabla^2\log\varrho|^2\dd x \geq \frac{1}{8}\int_\Omega |\nabla\varrho^{1/4}|^4\dd x. \]
\end{prop}
    
    Then from the estimates on $\sqrt{\varrho_n}$, again by Aubin-Lions lemma we get
    \[ \sqrt{\varrho_n} \to \sqrt{\varrho} \quad \text{in} \quad L^2(0,T;H^1). \]
\end{proof}

\begin{lem}[Limit in the nonlinear damping]\label{convergence2}
    If $\sqrt{\varrho_n}\to \sqrt{\varrho}$ in $L^2(0,T;H^1_{loc})$, $\varrho_n u_n\to \varrho u$ in $L^2(0,T;L^{3/2}_{loc})$ and $u_n\rightharpoonup u$ in $L^2(0,T;L^2)$, and additionally
    \[ \int_0^T\!\!\!\int_\Omega \varrho_n|u_n|^4\dd x\,\dd t \leq C \]
    uniformly in $n$, then
    \[ \varrho_n|u_n|^2u_n \to \varrho|u|^2u \quad \text{in} \quad L^1(0,T;L^1). \]
\end{lem}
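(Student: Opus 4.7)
The plan is to combine almost everywhere convergence with equi-integrability via Vitali's theorem, in the spirit of the Mellet--Vasseur argument. First I would extract a subsequence so that the assumed strong convergences yield $\sqrt{\varrho_n}\to\sqrt{\varrho}$ and $\varrho_nu_n\to\varrho u$ almost everywhere on $[0,T]\times\Omega$. Setting $u:=\varrho u/\varrho$ on $\{\varrho>0\}$ and $u:=0$ otherwise, this gives $u_n\to u$ a.e.~on $\{\varrho>0\}$, and hence $\varrho_n|u_n|^2u_n\to\varrho|u|^2u$ a.e.~on that set.

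To control the integrals, I would truncate the velocity at height $M>0$ and split
\[
\varrho_n|u_n|^2u_n = \varrho_n|u_n|^2u_n\,\mathbbm{1}_{|u_n|\leq M} + \varrho_n|u_n|^2u_n\,\mathbbm{1}_{|u_n|>M}.
\]
The high-velocity tail obeys the pointwise bound $|\varrho_n|u_n|^2u_n|\,\mathbbm{1}_{|u_n|>M}\leq M^{-1}\varrho_n|u_n|^4$, contributing at most $C/M$ to the $L^1(0,T;L^1)$ norm uniformly in $n$, and likewise for its limit. The truncated piece is pointwise dominated by $M^3\varrho_n$, while the strong convergence $\sqrt{\varrho_n}\to\sqrt{\varrho}$ in $L^2(0,T;H^1_{loc})$, combined with the Sobolev embedding, yields $\varrho_n\to\varrho$ in $L^1_{loc}([0,T]\times\Omega)$, so the family $\{M^3\varrho_n\}$ is equi-integrable. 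Combining equi-integrability with the pointwise convergence on $\{\varrho>0\}$, Vitali's theorem gives convergence of $\varrho_n|u_n|^2u_n\,\mathbbm{1}_{|u_n|\leq M}$ to $\varrho|u|^2u\,\mathbbm{1}_{|u|\leq M}$ in $L^1(0,T;L^1)$. Letting first $n\to\infty$ and then $M\to\infty$ closes the argument.

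The main obstacle is the set $\{\varrho=0\}$, where $u_n$ need not converge and may oscillate wildly, so there is no pointwise limit to exploit. The truncation rescues the argument: on $\{\varrho=0\}$ the truncated integrand is dominated by $M^3\varrho_n$, which tends to $M^3\varrho=0$ strongly in $L^1_{loc}$, so for each fixed $M$ its contribution vanishes as $n\to\infty$; the uniform bound on $\varrho_n|u_n|^4$ then makes the remainder $O(1/M)$ uniformly in $n$.
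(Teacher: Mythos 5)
Your argument is correct and follows essentially the same route as the paper: almost-everywhere convergence of $u_n=\varrho_nu_n/\varrho_n$ on $\{\varrho>0\}$, truncation of the velocity at level $M$, domination of the truncated part by $M^3\varrho_n$ (which kills the contribution of $\{\varrho=0\}$), and an $O(1/M)$ tail from the uniform bound on $\varrho_n|u_n|^4$. The only cosmetic differences are that you invoke Vitali where the paper uses dominated convergence, and that you cut with the sharp indicator $\mathbbm{1}_{|u_n|\leq M}$ instead of the continuous truncation $T_M$ of \eqref{truncation} — the latter avoids the (harmless, but worth noting) issue of non-convergence of the indicators on the set $\{|u|=M\}$, which one otherwise handles by picking $M$ outside the at most countable set of levels where that set has positive measure.
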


\begin{proof}
    First, note that $\varrho_n^{1/4}u_n\rightharpoonup \varrho^{1/4}u$ in $L^4(0,T;L^4)$. Therefore, from lower semicontinuity of the norm and Fatou's lemma,
    \[ \int_0^T\!\!\!\int_\Omega \varrho|u|^4 \dd x\,\dd t \leq \int_0^T\!\!\!\int_\Omega \liminf_{n\to\infty}\varrho_n|u_n|^4\dd x\,\dd t \leq \liminf_{n\to\infty}\int_0^T\!\!\!\int_\Omega \varrho_n|u_n|^4\dd x\,\dd t \leq C.  \]
    From the strong convergence of $\sqrt{\varrho_n}$ and $\varrho_nu_n$, we know that
    \[ \varrho_n(t,x), (\varrho_nu_n)(t,x) \to \varrho(t,x), (\varrho u)(t,x) \quad \text{a.e.}, \]
    up to a subsequence. Therefore, for almost every $(t,x)$ such that $\varrho_n(t,x)\not\to 0$, we have
    \[ u_n = \frac{\varrho_nu_n}{\varrho_n} \to u. \]
 For the points where $\varrho_n \to 0$, we write
    \[ \varrho_n|T_M(u_n)|^3 \leq M^3\varrho_n \to 0 = \varrho|T_M(u)|^3, \]
   where $T_M$ is the truncation operator defined as
    \begin{equation}\label{truncation} T_M(u) = \left\{\begin{aligned}
        u, &\quad |u|\leq M, \\
        M\frac{u}{|u|}, &\quad |u|>M,
    \end{aligned}\right. \end{equation}

    Therefore from the dominated convergence theorem,
    \[ \varrho_n|T_M(u_n)|^2T_M(u_n) \to \varrho|T_M(u)|^2T_M(u) \quad \text{in} \quad L^1(0,T;L^1) \]
    for any fixed $M>0$. Moreover, we have
    \[\begin{aligned} \int_0^T\!\!\!\int_{\T_L^3} \Big|\varrho_n|u_n|^2u_n-\varrho|u|^2u\Big|\dd x\,\dd t \leq & \int_0^T\!\!\!\int_{\T_L^3} \Big|\varrho_n|T_M(u_n)|^2T_M(u_n)-\varrho|T_M(u)|^2T_M(u)\Big|\dd x\,\dd t \\
    &+ 2\int_0^T\!\!\!\int_{\T_L^3} \varrho_n|u_n|^3\mathbbm{1}_{|u_n|>M}\dd x\,\dd t + 2\int_0^T\!\!\!\int_{\T_L^3} \varrho|u|^3\mathbbm{1}_{|u|>M}\dd x\,\dd t \\
    \leq & \int_0^T\!\!\!\int_{\T_L^3} \Big|\varrho_n|u_n|^2u_n\mathbbm{1}_{|u_n|\leq M}-\varrho|u|^2u\mathbbm{1}_{|u|\leq M}\Big|\dd x\,\dd t \\
    &+ \frac{2}{M}\int_0^T\!\!\!\int_{\T_L^3} \varrho_n|u_n|^4\dd x\,\dd t + \frac{2}{M}\int_0^T\!\!\!\int_{\T_L^3} \varrho|u|^4\dd x\,\dd t.
    \end{aligned}\]
    Therefore 
    \[ \limsup_{n\to\infty} \int_0^T\!\!\!\int_{\T_L^3} \Big|\varrho_n|u_n|^2u_n-\varrho|u|^2u\Big|\dd x\,\dd t \leq \frac{C}{M}. \]
    Letting $M\to\infty$, we obtain the desired convergence.
\end{proof}

\subsection{Limit passage with $\nu,\varepsilon\to 0$}\label{ssect_mu_eps}

Now we pass to the limit with $\nu,\varepsilon \to 0$. Note that the inequality (\ref{BD}), together with the energy estimate (\ref{energy}) again provides us the estimates required in Lemmas \ref{convergence1}-\ref{convergence2} uniformly in $\varepsilon$ and $\nu$, where the estimate on $\varrho|\nabla u|^2$ comes from the estimate on the symmetric gradient $\varrho|\D u|^2$ in (\ref{energy}) and the antisymmetric part $\varrho|\nabla u-\nabla^Tu|^2$ in (\ref{BD}).

 For the terms depending on $\varepsilon$ and $\nu$, in the weak formulation we have
\[ \varepsilon\int_0^T\!\!\!\int_{\T_L^3}\nabla\varrho_{\nu,\varepsilon}\cdot\nabla\varphi \;\dd x\,\dd t \leq \sqrt{\varepsilon}\sqrt{\varepsilon}\|\nabla\varrho_{\nu,\varepsilon}\|_{L^2(0,T;L^2)}\|\nabla\varphi\|_{L^2(0,T;L^2)} \to 0, \]

\[ \varepsilon\int_0^T\!\!\!\int_{\T_L^3} \nabla\varrho_{\nu,\varepsilon} \nabla u_{\nu,\varepsilon}\varphi \;\dd x\,\dd t \leq \varepsilon\|\nabla\sqrt{\varrho_{\nu,\varepsilon}}\|_{L^\infty(0,T;L^2)}\|\sqrt{\varrho_{\nu,\varepsilon}}\nabla u_{\nu,\varepsilon}\|_{L^2(0,T;L^2)}\|\varphi\|_{L^2(0,T;L^\infty)} \to 0
 \]
 and
 \[ \nu\int_0^T\!\!\!\int_{\T_L^3} \Delta u_{\nu,\varepsilon}\Delta\varphi \dd x\,\dd t \leq \sqrt{\nu}\sqrt{\nu}\|\Delta u_{\nu,\varepsilon}\|_{L^2(0,T;L^2)}\|\Delta\varphi\|_{L^2(0,T;L^2)} \to 0. \]
 In consequence, performing limit passages  as before, we obtain the solutions to the system
\eq{\label{approx_nonlocal}
        &\partial_t\varrho + \ddiv(\varrho u) = 0, \\
        &\partial_t(\varrho u) + \ddiv(\varrho u\otimes u) - \ddiv(\varrho\D u) + \varrho\nabla(K_L\ast\varrho) \\ 
        &= -r_0u - r_1\varrho|u|^2u + \kappa\varrho\nabla\left(\frac{\Delta\sqrt{\varrho}}{\sqrt{\varrho}}\right)  + \eta\nabla\varrho^{-6} + \delta\varrho\nabla\Delta^3\varrho.
}
\noindent
Passing to the limit in (\ref{energy}) and (\ref{BD}), again by the lower semicontinuity of convex functions, we get
\begin{multline}\label{energy2}
    \sup_{t\in[0,T]} E(\varrho,u) + \int_0^T\!\!\!\int_{\T_L^3}\varrho|\D u|^2\dd x\,\dd t + r_0\int_0^T\!\!\!\int_{\T_L^3}|u|^2\dd x\,\dd t + r_1\int_0^T\!\!\!\int_{\T_L^3}\varrho|u|^4\dd x\,\dd t \\
    \leq E(\varrho_0,u_0)
\end{multline}
%
and for the Bresch--Desjardins inequality
\begin{equation}\label{BD2}
    \begin{aligned}
        E_{BD}(\varrho,u)- & r_0\int_{\T_L^3}\log\varrho\;\dd x+ \eta\int_0^T\!\!\!\int_{\T_L^3}|\nabla\varrho^{-3}|^2\;\dd x\,\dd t + 2\delta\int_0^T\!\!\!\int_{\T_L^3}|\Delta^2\varrho|^2\dd x\,\dd t \\
        &+ \frac{1}{8}\int_0^T\!\!\!\int_{\T_L^3}\varrho|\nabla u-\nabla^Tu|^2\dd x\,\dd t + \kappa\int_0^T\!\!\!\int_{\T_L^3}\varrho|\nabla^2\log\varrho|^2\dd x\,\dd t \\
        \leq & E_{BD}(\varrho_0,u_0) -r_0\int_{\T_L^3}\log\varrho_0\;\dd x + E(\varrho_0,u_0) +CT\|\varrho_0\|_{L^1(\T_L^3)}^2.
    \end{aligned}
\end{equation}

\subsection{Limit passage with $\eta,\delta\to 0$.}\label{ssect_eta_delta}
We will first pass to the limit with $\eta$ and then with $\delta$. Note that the inequalities (\ref{energy2}) and (\ref{BD2}) provide us again the estimates required in Lemmas \ref{convergence1} and \ref{convergence2}, this time uniformly in $\eta$ and $\delta$ (provided that $\delta\|\nabla\Delta\tilde\varrho_0\|_{L^2(\T_L^3)}^2\to 0$). We need to pass to the limit only with the terms $\eta\nabla\varrho^{-6}$ and $\delta\varrho\nabla\Delta^3\varrho$, since the remaining terms are treated in the same way as before. Note that since we lose the information on $\D u$ on the set where $\varrho=0$, we pass to the limit in the stress tensor using the relation
\[ \varrho\D u = \D(\varrho u) - \nabla\varrho\otimes u. \]

From (\ref{energy2}) and (\ref{BD2}) we also have the estimates
\begin{equation}
\eta\|\varrho_{\eta,\delta}^{-6}\|_{L^\infty(0,T;L^1)}, \sqrt{\eta}\|\varrho_{\eta,\delta}^{-3}\|_{L^2(0,T;H^1)} \leq C(T).
\end{equation}
In consequence, using the interpolation between $L^\infty(0,T;L^1)$ and $L^1(0,T;L^3)$, we have
\[ \|\eta\varrho_{\eta,\delta}^{-6}\|_{L^{5/3}(0,T;L^{5/3})} \leq C(T) \]
as well.

The above estimates allow us to show
\begin{lem}
    If $\varrho_{\eta,\delta}$ satisfies the estimates following from (\ref{energy2}) and (\ref{BD2}), then
    \[ \eta\int_0^T\!\!\!\int_{\T_L^3}\varrho_{\eta,\delta}^{-6}\dd x\,\dd t \to 0 \quad \text{as} \quad \eta\to 0. \]
\end{lem}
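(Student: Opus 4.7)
The plan is to show that $\eta\varrho_{\eta,\delta}^{-6}\to 0$ almost everywhere on $(0,T)\times\T_L^3$ and to combine this with the already-noted uniform bound $\|\eta\varrho_{\eta,\delta}^{-6}\|_{L^{5/3}((0,T)\times\T_L^3)}\leq C(T)$ via Vitali's convergence theorem. The only non-routine step is verifying that the limiting density $\varrho_\delta$ does not vanish on a set of positive measure; everything else is classical.

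First I would invoke Lemma \ref{convergence1}, whose hypotheses are met uniformly in $\eta$ thanks to (\ref{energy2})--(\ref{BD2}), obtaining a subsequence with $\varrho_{\eta,\delta}\to\varrho_\delta$ strongly in $C(0,T;L^{3/2})$ and hence almost everywhere on the cylinder. Next I exploit the damping contribution $-r_0\int_{\T_L^3}\log\varrho$ on the left-hand side of (\ref{BD2}). Since $r_0>0$, all remaining left-hand side terms are nonnegative and the right-hand side is finite uniformly in $\eta$, which forces $\int_{\T_L^3}\log\varrho_{\eta,\delta}(t,\cdot)\,\dd x\geq -C$ for every $t\in[0,T]$ uniformly in $\eta$. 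Combined with $\int\log_+\varrho_{\eta,\delta}\leq\int\varrho_{\eta,\delta}\leq C$, this yields the uniform bound
\[
\sup_{\eta>0}\;\sup_{t\in[0,T]}\int_{\T_L^3}\bigl(-\log\varrho_{\eta,\delta}(t,\cdot)\bigr)_+\,\dd x\leq C.
\]
Applying Fatou's lemma on $(0,T)\times\T_L^3$ to the nonnegative sequence $(-\log\varrho_{\eta,\delta})_+$ and noting that $\liminf_{\eta\to 0}(-\log\varrho_{\eta,\delta}(t,x))_+=+\infty$ on $\{\varrho_\delta(t,x)=0\}$, the finiteness of the liminf of the integrals forces $|\{(t,x):\varrho_\delta(t,x)=0\}|=0$.

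On the full-measure set $\{\varrho_\delta>0\}$, the a.e. convergence $\varrho_{\eta,\delta}\to\varrho_\delta>0$ implies $\eta\varrho_{\eta,\delta}^{-6}(t,x)\to 0$ pointwise. Since $\eta\varrho_{\eta,\delta}^{-6}$ is bounded in $L^{5/3}$ on the finite-measure cylinder, the family is equi-integrable, so Vitali's theorem upgrades this a.e. convergence to convergence in $L^1((0,T)\times\T_L^3)$, which is exactly the claim.

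The main obstacle is the positivity step: the standard $E_{BD}$-bounds on $\sqrt{\varrho}$ and $\nabla\sqrt{\varrho}$ do not by themselves exclude a limit vacuum of positive measure, and a direct interpolation of the $L^\infty_tL^1_x$ and $L^1_tL^3_x$ bounds on $\eta\varrho_{\eta,\delta}^{-6}$ only produces a uniform bound on $\eta\int_0^T\!\!\int\varrho_{\eta,\delta}^{-6}$, never smallness. It is precisely the friction contribution $-r_0\int\log\varrho$ in (\ref{BD2}) that unlocks the Fatou argument, which is why this lemma must be settled before the $r_0\to 0$ passage.
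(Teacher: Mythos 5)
Your proposal is correct and follows essentially the same route as the paper: both extract the uniform bound on $\int_{\T_L^3}\log_+\bigl(1/\varrho_{\eta,\delta}\bigr)\,\dd x$ from the $-r_0\int\log\varrho$ term in (\ref{BD2}), use Fatou together with the a.e. convergence $\varrho_{\eta,\delta}\to\varrho_\delta$ to rule out a vacuum set of positive measure, and then upgrade the resulting a.e. convergence $\eta\varrho_{\eta,\delta}^{-6}\to 0$ to $L^1$ convergence via the uniform $L^{5/3}$ bound. The only cosmetic difference is that you apply Fatou on the full space-time cylinder while the paper argues for a.e. fixed $t$; both are fine.
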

\begin{proof}
 From (\ref{BD2}) we know that
    \[ r_0\int_{\T_L^3}\log_+\left(\frac{1}{\varrho_{\eta,\delta}}\right)\;\dd x \leq C(T). \]
    As $\sqrt{\varrho_{\eta,\delta}}\to \sqrt{\varrho_\delta}$ in $L^2(0,T;H^1)$, then $\varrho_{\eta,\delta}\to \varrho_\delta$ a. e. and using the convexity of a function $y\mapsto \log_+\left(\frac{1}{y}\right)$ from Fatou's Lemma
    \[\begin{aligned} \int_{\T_L^3}\log_+\left(\frac{1}{\varrho_\delta}\right)\dd x \leq  \int_{\T_L^3}\liminf_{\eta\to 0}\log_+\left(\frac{1}{\varrho_{\eta,\delta}}\right)\dd x 
\leq  \liminf_{\eta\to 0}\int_{\T_L^3}\log_+\left(\frac{1}{\varrho_{\eta,\delta}}\right)\dd x \leq C. \end{aligned}\]
    Therefore $ |\{x: \varrho_\delta(t,x)=0\}|=0$ for almost every $t$.
    Then, as $\varrho_{\eta.\delta}\to \varrho_\delta$ a. e., we get 
    $\eta\varrho_{\eta,\delta}^{-6}\to 0$  a. e..
    As $\eta\varrho_{\eta,\delta}^{-6}$ is uniformly bounded in $L^{5/3}(0,T;L^{5/3})$, it follows that
    \[ \eta\varrho_{\eta,\delta}^{-6}\to 0 \quad \text{in} \quad L^1(0,T;L^1). \]
\end{proof}

Now we pass to the limit with $\delta$:
\begin{lem}
    For any $\varphi\in C_0^\infty([0,T]\times\T_L^3)$ we have
    \[ \delta\int_0^T\!\!\!\int_{\T_L^3}\varrho_\delta\nabla\Delta^3\varrho_\delta \varphi\;\dd x\,\dd t \to 0,\quad as\quad \delta\to 0. \]
\end{lem}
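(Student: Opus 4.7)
The plan is to integrate by parts three times in space, transferring the derivatives from $\Delta^3\varrho_\delta$ onto the test function side, and then apply H\"older's inequality splitting $\delta = \sqrt{\delta}\cdot\sqrt{\delta}$ so as to exploit the two uniform bounds
\[
\sqrt{\delta}\,\|\nabla\Delta\varrho_\delta\|_{L^\infty(0,T;L^2(\T_L^3))}\leq C, \qquad \sqrt{\delta}\,\|\Delta^2\varrho_\delta\|_{L^2(0,T;L^2(\T_L^3))}\leq C,
\]
coming respectively from the energy estimate \eqref{energy2} and the Bresch--Desjardins estimate \eqref{BD2}, together with the $\delta$-independent regularity $\sqrt{\varrho_\delta}\in L^\infty(0,T;H^1)\cap L^2(0,T;H^2)$ furnished by the $\kappa$-term of \eqref{BD2}.

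First I would perform the integration by parts to obtain
\[
\delta\int_0^T\!\!\int_{\T_L^3}\varrho_\delta\nabla\Delta^3\varrho_\delta\cdot\varphi\,\dd x\,\dd t
= -\delta\int_0^T\!\!\int_{\T_L^3}\Delta\ddiv(\varrho_\delta\varphi)\,\Delta^2\varrho_\delta\,\dd x\,\dd t,
\]
bound by Cauchy--Schwarz
\[
\Big|\text{LHS}\Big|\leq \sqrt{\delta}\,\|\sqrt{\delta}\Delta^2\varrho_\delta\|_{L^2(L^2)}\,\|\Delta\ddiv(\varrho_\delta\varphi)\|_{L^2(L^2)},
\]
and expand $\Delta\ddiv(\varrho_\delta\varphi)$ by the Leibniz rule into six contributions: the top-order term $\nabla\Delta\varrho_\delta\cdot\varphi$, and five lower-order terms involving only $\varrho_\delta,\nabla\varrho_\delta,\nabla^2\varrho_\delta,\Delta\varrho_\delta$.

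For the five lower-order terms, H\"older's inequality with the $\delta$-independent bounds on $\sqrt{\varrho_\delta}$ (noting in particular $\nabla\varrho_\delta = 2\sqrt{\varrho_\delta}\,\nabla\!\sqrt{\varrho_\delta}$ and $\Delta\varrho_\delta = 2\sqrt{\varrho_\delta}\Delta\!\sqrt{\varrho_\delta} + 2|\nabla\!\sqrt{\varrho_\delta}|^2$) yields a uniform $L^2(L^2)$-bound, so the leading prefactor $\sqrt{\delta}$ drives these contributions to $0$.

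The main obstacle is the top-order contribution $\delta\int\nabla\Delta\varrho_\delta\cdot\varphi\,\Delta^2\varrho_\delta\,\dd x\,\dd t$, where both factors are controlled only after multiplication by $\sqrt{\delta}$. Here I would perform an additional integration by parts on the $\nabla$-derivative and use that $\Delta^2\varrho = \ddiv(\nabla\Delta\varrho)$ together with the symmetry $\partial_i\partial_k\Delta\varrho = \partial_k\partial_i\Delta\varrho$ to derive the identity
\[
\int\nabla\Delta\varrho_\delta\cdot\varphi\,\Delta^2\varrho_\delta\,\dd x = \tfrac12\int|\nabla\Delta\varrho_\delta|^2\,\ddiv\varphi\,\dd x - \int \nabla\Delta\varrho_\delta\otimes\nabla\Delta\varrho_\delta:\nabla\varphi\,\dd x,
\]
which recasts the contribution as a quadratic expression in $\sqrt{\delta}\nabla\Delta\varrho_\delta$ tested against smooth derivatives of $\varphi$. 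Combined with the weak-$\ast$ convergence $\sqrt{\delta}\nabla\Delta\varrho_\delta\overset{*}{\rightharpoonup} 0$ in $L^\infty(0,T;L^2(\T_L^3))$ (which follows from pairing with any smooth $\phi$ via $\sqrt{\delta}\int\phi\cdot\nabla\Delta\varrho_\delta = -\sqrt{\delta}\int\Delta\ddiv\phi\,\varrho_\delta\to 0$ using the $L^1$-boundedness of $\varrho_\delta$), this delivers the required vanishing of the top-order term, completing the proof.
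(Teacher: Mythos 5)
Your integration by parts, the identity
\[
\int_{\T_L^3}(\nabla\Delta\varrho_\delta\cdot\varphi)\,\Delta^2\varrho_\delta\,\dd x=\tfrac12\int_{\T_L^3}|\nabla\Delta\varrho_\delta|^2\,\ddiv\varphi\,\dd x-\int_{\T_L^3}\nabla\Delta\varrho_\delta\otimes\nabla\Delta\varrho_\delta:\nabla\varphi\,\dd x,
\]
and the weak-$\ast$ convergence $\sqrt{\delta}\,\nabla\Delta\varrho_\delta\overset{*}{\rightharpoonup}0$ are all correct, but the final step is a genuine gap: weak(-$\ast$) convergence to zero of $w_\delta:=\sqrt{\delta}\,\nabla\Delta\varrho_\delta$ does \emph{not} imply that the quadratic expressions $\int_0^T\!\!\int|w_\delta|^2\ddiv\varphi$ and $\int_0^T\!\!\int w_\delta\otimes w_\delta:\nabla\varphi$ vanish in the limit — a bounded sequence can converge weakly to zero while its $L^2$ norm stays bounded away from zero (oscillation/concentration). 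Nothing in the available estimates forces $\sqrt{\delta}\,\|\nabla\Delta\varrho_\delta(t)\|_{L^2}\to 0$ for $t>0$: the energy inequality only bounds $\sup_t\frac{\delta}{2}\|\nabla\Delta\varrho_\delta(t)\|_{L^2}^2$ by the \emph{total} initial energy, which contains the $O(1)$ kinetic and interaction parts, and energy can migrate into the $\delta$-component at positive times. So your reformulation turns the top-order term into something that is merely bounded, not infinitesimal. The paper instead gains a genuinely small power of $\delta$ by interpolation: Gagliardo--Nirenberg gives $\|\nabla^3\varrho_\delta\|_{L^3}\leq C\|\nabla^4\varrho_\delta\|_{L^2}^{6/7}\|\varrho_\delta\|_{L^3}^{1/7}$, whence $\delta^{3/7}\|\nabla^3\varrho_\delta\|_{L^{7/3}(0,T;L^3)}\leq C$ using $\sqrt{\delta}\|\varrho_\delta\|_{L^2(0,T;H^4)}\leq C$ and the $\delta$-independent bound $\|\varrho_\delta\|_{L^\infty(0,T;L^3)}\leq C$; pairing this with $\sqrt{\delta}\,\Delta^2\varrho_\delta\in L^2(L^2)$ leaves a spare factor $\delta^{1/14}$ that drives the term to zero.

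A secondary, fixable imprecision: for the lower-order terms you assert a uniform $L^2(0,T;L^2)$ bound on quantities like $\nabla^2\varrho_\delta$ from the $\sqrt{\varrho_\delta}$ regularity alone. Writing $\nabla^2\varrho_\delta=2\sqrt{\varrho_\delta}\,\nabla^2\sqrt{\varrho_\delta}+2\nabla\sqrt{\varrho_\delta}\otimes\nabla\sqrt{\varrho_\delta}$ and using $\sqrt{\varrho_\delta}\in L^\infty(L^6)\cap L^2(H^2)$ only yields $L^2(0,T;L^{3/2})$, which cannot be H\"older-paired with $\sqrt{\delta}\,\Delta^2\varrho_\delta\in L^2(L^2)$. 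These terms do vanish, but again via interpolation against the $\delta$-weighted higher norms (e.g. $\|\nabla^2\varrho_\delta\|_{L^2}\leq C\|\nabla^4\varrho_\delta\|_{L^2}^{1/2}\|\varrho_\delta\|_{L^2}^{1/2}$), which is exactly the mechanism you would also need for the top-order term.
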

\begin{proof}
    We have
    \[ \delta\int_0^T\!\!\!\int_{\T_L^3}\varrho_\delta\nabla\Delta^3\varrho_\delta\varphi \;\dd x\,\dd t = -\delta\int_0^T\!\!\!\int_{\T_L^3}\Delta\ddiv(\varrho_\delta\varphi)\Delta^2\varrho_\delta \;\dd x\,\dd t. \]
    The inequalities (\ref{energy2}) and (\ref{BD2}) give
    \[ \sqrt{\delta}\|\varrho_\delta\|_{L^\infty(0,T;H^3)}, \sqrt{\delta}\|\varrho_\delta\|_{L^2(0,T;H^{4})} \leq C(T). \]
    Moreover, from the uniform estimate on $\nabla\sqrt{\varrho_\delta}$ in $L^\infty(0,T;L^2)$, we also have
    \[ \|\varrho_\delta\|_{L^\infty(0,T;L^3)} \leq C(T). \]
    Using the Gagliardo-Nirenberg inequality
    $ \|\nabla^3\varrho_\delta\|_{L^3}\leq C\|\nabla^{4}\varrho_\delta\|_{L^2}^{\frac{6}{7}}\|\varrho_\delta\|_{L^3}^{\frac{1}{7}},$
    we get
    \[ \delta\int_0^T\|\nabla^3\varrho_\delta\|_{L^3}^{\frac{7}{3}}\dd t \leq C\sup_{t\in[0,T]}\|\varrho_\delta\|_{L^3}^{\frac{1}{3}}\int_0^T\delta\|\nabla^{4}\varrho_\delta\|_{L^2}^2\dd t \]
    and in consequence
    $\delta^{\frac{3}{7}}\|\nabla^3\varrho_\delta\|_{L^{\frac{7}{3}}(0,T;L^3)}\leq C(T). $
    Then
    \[ \left|\delta\int_0^T\!\!\!\int_{\T_L^3}\Delta\nabla\varrho_\delta \cdot \Delta^2\varrho_\delta\varphi \;\dd x\,\dd t\right| \leq C(\varphi)\delta^{\frac{1}{14}}\|\sqrt{\delta}\nabla^{4}\varrho_\delta\|_{L^2(0,T;L^2)}\|\delta^{\frac{3}{7}}\nabla^3\varrho_\delta\|_{L^{\frac{7}{3}}(0,T;L^3)} \to 0 \]
    as $\delta\to 0$. Applying the same arguments to the rest of the terms from
    \[ \delta\int_0^T\!\!\!\int_{\T_L^3}\Delta\ddiv(\varrho_\delta\varphi)\Delta^2\varrho_\delta\dd x\,\dd t, \]
    we finish the proof of the Lemma.
\end{proof}

\begin{rem}\label{Du_rem2}
Note that in the limit passage with $\eta$ we lost any information on $\nabla u$. However, from the uniform estimates we know that up to a subsequence
\[ \sqrt{\varrho_{\eta,\delta}}\nabla u_{\eta,\delta} \rightharpoonup \overline{\sqrt{\varrho_\delta}\nabla u_\delta} \quad \text{in} \quad L^2((0,T)\times\T_L^3). \]
Using the relation
\[ \sqrt{\varrho}\nabla u = \nabla(\sqrt{\varrho} u) - \nabla\sqrt{\varrho}\otimes u, \]
from the strong convergence of $\varrho_{\eta,\delta}$ and $\nabla\sqrt{\varrho_{\eta,\delta}}$ and weak convergence of $u_{\eta,\delta}$ we get
\[ \overline{\sqrt{\varrho_\delta}\nabla u_\delta} = \nabla(\sqrt{\varrho_\delta} u_\delta) - \nabla\sqrt{\varrho_\delta}\otimes u_\delta. \]
Proceeding analogously, after passing to the limit with $\delta\to 0$ we get as well
\begin{equation}\label{Du} \overline{\sqrt{\varrho}\nabla u} = \nabla(\sqrt{\varrho} u) - \nabla\sqrt{\varrho}\otimes u. \end{equation}
In the analogous way we can also define $\overline{\sqrt{\varrho}\D u}$, $\overline{\sqrt{\varrho}\ddiv u}$ etc. In the next sections we will again omit the bars, keeping in mind the relation (\ref{Du}).
\end{rem}

\section{Mellet -- Vasseur estimates}\label{MV_sect}

Before we pass to the limit with the remaining parameters, we need to extract another estimate from the system. In the previous section, we showed the existence of a weak solution to the system
\begin{equation}\label{main2}
    \begin{aligned}
    \partial_t\varrho + \ddiv(\varrho u) &= 0,
    \\
    \partial_t(\varrho u) + \ddiv(\varrho u\otimes u) -\ddiv(\varrho\D u) + \varrho\nabla(K_L\ast\varrho) &= -r_0u - r_1\varrho|u|^2u + \kappa\varrho\nabla\left(\frac{\Delta\sqrt{\varrho}}{\sqrt{\varrho}}\right)
    \end{aligned}
\end{equation}
on $[0,T]\times\T_L^3$ with the initial conditions 
\[ \varrho_{|_{t=0}}:=\tilde\varrho_{0,L}=\varrho_{0,L}+\frac{1}{m_1}, \quad u_{|_{t=0}}=u_{0,L}, \]
where $m_1>0$ and $\varrho_{0,L},u_{0,L}$ are like in Section \ref{trunc_sect}. Similarly as in the Definition \ref{Def:main}, this means that for each $\varphi\in C_0^\infty([0,T)\times\T_L^3;\R)$ and $\psi\in C_0^\infty([0,T)\times\T_L^3;\R^3)$ it holds
\[ -\int_0^T\!\!\!\int_{\T_L^3}\varrho\partial_t\varphi \;\dd x\,\dd t - \int_0^T\!\!\!\int_{\T_L^3}\sqrt\varrho \sqrt\varrho u\cdot\nabla\varphi \;\dd x\,\dd t = \int_{\T_L^3}\tilde\varrho_{0,L}\varphi(0,\cdot)\;\dd x \]
and
\[ \begin{aligned} -\int_{\T_L^3} \tilde\varrho_{0,L}u_{0,L}\psi(0,\cdot)\;\dd x & -\int_0^T\!\!\!\int_{\T_L^3} \sqrt\varrho \sqrt\varrho u\partial_t\psi \;\dd x\,\dd t - \int_0^T\!\!\!\int_{\T_L^3}(\sqrt{\varrho}u\otimes\sqrt{\varrho}u):\nabla\psi\;\dd x\,\dd t \\
+ & \langle \varrho\D u,\nabla\psi \rangle  +\int_0^T\!\!\!\int_{\T_L^3}\varrho\nabla(K_L\ast\varrho)\cdot\psi \;\dd x \\
&= -r_0\int_0^T\!\!\!\int_{\T_L^3} u\cdot\psi \;\dd x\,\dd t -r_1\int_0^T\!\!\!\int_{\T_L^3} \varrho|u|^2u\cdot\psi\;\dd x\,\dd t \\
&\;\;\;\; -\kappa\int_0^T\!\!\!\int_{\T_L^3}\Delta\sqrt{\varrho}\sqrt{\varrho}\ddiv\psi\;\dd x\,\dd t - 2\kappa\int_0^T\!\!\!\int_{\T_L^3}\Delta\sqrt{\varrho}\nabla\sqrt{\varrho}\cdot\psi\;\dd x\,\dd t.
\end{aligned} \]
The solution satisfies the following estimates:

\begin{multline}\label{energy3}
    \sup_{t\in[0,T]} \frac{1}{2}\int_{\T_L^3}\left(\varrho|u|^2 + \varrho(K_L\ast\varrho) + \kappa|\nabla\sqrt{\varrho}|^2\right)\dd x + \int_0^T\!\!\!\int_{\T_L^3}\varrho|\D u|^2\dd x\,\dd t \\
    + r_0\int_0^T\!\!\!\int_{\T_L^3}|u|^2\dd x\,\dd t + r_1\int_0^T\!\!\!\int_{\T_L^3}\varrho|u|^4\dd x\,\dd t
    \leq E(\tilde\varrho_{0,L},u_{0,L}),
\end{multline}
where
\[ E(\tilde\varrho_{0,L},u_{0,L}) = \frac{1}{2}\int_{\T_L^3}\left(\tilde\varrho_{0,L}|u_{0,L}|^2+\tilde\varrho_{0,L}(K_L\ast\tilde\varrho_{0,L}) + \kappa|\nabla\sqrt{\tilde\varrho_{0,L}}|^2\right) \;\dd x, \]
and
\begin{multline}\label{BD3}
        \int_{\T_L^3} \left(|\nabla\sqrt{\varrho}|^2 -r_0\log\varrho \right)\;\dd x + \frac{1}{8}\int_0^T\!\!\!\int_{\T_L^3}\varrho|\nabla u-\nabla^Tu|^2\dd x + \kappa\int_0^T\!\!\!\int_{\T_L^3}\varrho|\nabla^2\log\varrho|^2\dd x \\
        \leq 2E(\tilde\varrho_{0,L},u_{0,L}) + \int_{\T_L^3} \left(|\nabla\sqrt{\tilde\varrho_{0,L}}|^2 -r_0\log\tilde\varrho_{0,L} \right)\dd x + CT\|\tilde\varrho_{0,L}\|_{L^1(\T_L^3)}^2.
\end{multline}

\noindent
From (\ref{BD3}) and Proposition \ref{prop_jungel}, it also follows that 
\begin{equation}\label{main_kappa} \kappa^{1/2}\|\sqrt{\varrho}\|_{L^2(0,T;H^2)} + \kappa^{1/4}\|\nabla\varrho^{1/4}\|_{L^4(0,T;L^4)} \leq C. \end{equation}
For the time regularity we have
\[ \|\partial_t\sqrt{\varrho}\|_{L^2(0,T;L^2)} \leq \frac{1}{2}\|\sqrt{\varrho}\ddiv u\|_{L^2(0,T;L^2)} + \frac{1}{2}\|\nabla\varrho^{1/4}\|_{L^4(0,T;L^4)}\|\varrho^{1/4}u\|_{L^4(0,T;L^4)}, \]
and since 
\[ \partial_t\varrho = -2\nabla\sqrt{\varrho}\cdot\varrho^{1/4}u\cdot\varrho^{1/4} - \sqrt{\varrho}\sqrt{\varrho}\ddiv u, \]
from (\ref{energy3}) and (\ref{BD3}) we get
\[ \|\partial_t\varrho\|_{L^2(0,T;L^{6/5})} \leq C. \]
 
In this section we perform the limit with $\kappa\to 0$ and simultaneously derive another estimate. The main result states 
\begin{lem}\label{MV_lem}
    There exists a solution to system (\ref{main2}) with $\kappa=0$, which satisfies the estimate
    \begin{multline}\label{MV} 
    \sup_{t\in[0,T]}\Bigg(\int_{\T_L^3}\varrho F(|u|) \;\dd x +  \iint_{\T_L^3\times\T_L^3} F(|x-y|)\varrho(x)\varrho(y)\;\dd x\,\dd y\Bigg) \\
    \leq C\left(\int_{\T_L^3}\varrho_0 F(|u_0|) + \iint_{\T_L^3\times\T_L^3} F(|x-y|)\varrho_0(x)\varrho_0(y)\;\dd x\,\dd y\right) + C + \frac{C}{L^2}
    \end{multline}
    for $F(z)=\frac{1+z^2}{2}\ln(1+z^2)$, where $C$ does not depend on $r_0$ and $r_1$. 
    Moreover, estimates (\ref{energy3}) and (\ref{BD3})  are valid with $\kappa =0$.
\end{lem}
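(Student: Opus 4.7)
The strategy is the Mellet--Vasseur renormalization: formally test the momentum equation in (\ref{main2}) by $F'(|u|)u/|u|=(1+\ln(1+|u|^2))u$. Since this is not an admissible test function, I would first approximate $F$ by a smooth $F_n$ with bounded second derivative and $F_n\nearrow F$, and replace $u$ by $v=\phi_m^0(\varrho)\phi_k^\infty(\varrho)u$, where $\phi_m^0,\phi_k^\infty$ are smooth cut-offs vanishing on $\{\varrho<1/(2m)\}$ and $\{\varrho>2k\}$ respectively. The $\kappa$-bounds (\ref{main_kappa}) on $\sqrt{\varrho}\in L^2(0,T;H^2)$ and $\nabla\varrho^{1/4}\in L^4((0,T)\times\T_L^3)$, combined with $\varrho^{1/4}u\in L^4((0,T)\times\T_L^3)$ coming from the $r_1$-friction in (\ref{energy3}), guarantee that $(1+\ln(1+|v|^2))v$, after a standard time mollification, is an admissible test function at the current approximation level.

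Running the computation as in \cite{vasseur-yu2}, the left-hand side of the resulting identity gathers $\frac{\dd}{\dd t}\int_{\T_L^3}\varrho F_n(|v|)\,\dd x$ together with nonnegative dissipative and friction contributions; the convective remainder from $\ddiv(\varrho u\otimes u)$ is absorbed thanks to the boundedness of $F_n''$ and the control $r_1\int_{\T_L^3}\varrho|u|^4\,\dd x$ from (\ref{energy3}). The quantum correction $\kappa\varrho\nabla(\Delta\sqrt{\varrho}/\sqrt{\varrho})$ produces errors of order $\kappa\, k^q$ against the estimates (\ref{main_kappa}) for some finite $q$, forcing the coupling $k=k(\kappa)\to\infty$ slowly enough that these errors vanish; the limits in $\kappa$ and $k$ are therefore performed simultaneously.

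The crucial novelty, and the main obstacle, is the nonlocal contribution $\iint_{\T_L^3\times\T_L^3} \varrho(x)\nabla K_L(x-y)\varrho(y)\cdot v(x)(1+\ln(1+|v(x)|^2))\,\dd x\,\dd y$, in which the attractive $|x|^2/2$ part of $K$ is not integrable with any power uniformly in $L$, so the H\"older-based approach of \cite{vasseur-yu2} is unavailable. The plan is to supplement the momentum-renormalization identity by the separately computed evolution of $\iint F(|x-y|)\varrho(x)\varrho(y)\,\dd x\,\dd y$, which from the continuity equation equals $2\iint(x-y)(1+\ln(1+|x-y|^2))\cdot u(x)\varrho(x)\varrho(y)\,\dd x\,\dd y$, and to combine the two inequalities. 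The resulting cross terms, together with the boundary contributions localized on $\{|x-y|\ge L/2\}$ arising from $\nabla\phi_L,\Delta\phi_L$ in $K_L$ (bounded by $C/L^2$ via the second-moment control from (\ref{energy3})), are then dominated --- via a generalized Young inequality for the convex pair $(F,F^*)$ --- by a constant multiple of $\int_{\T_L^3} \varrho F(|v|)\,\dd x + \iint_{\T_L^3\times\T_L^3} F(|x-y|)\varrho(x)\varrho(y)\,\dd x\,\dd y$.

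Setting $\Phi(t):=\int_{\T_L^3}\varrho F(|v|)\,\dd x + \iint_{\T_L^3\times\T_L^3} F(|x-y|)\varrho(x)\varrho(y)\,\dd x\,\dd y$, the preceding step yields a weak (integral) Gronwall inequality of the form $\Phi(t)\le \Phi(0)+C\int_0^t\Phi(s)\,\dd s + C(1+1/L^2)$, from which (\ref{MV}) follows uniformly in $m,k,\kappa$. Passing with $m,k\to\infty$ and $\kappa\to 0$, the bound (\ref{MV}) survives by lower semicontinuity of $\Phi$. For the nonlinear terms of the momentum equation, the newly obtained $\varrho F(|u|)\in L^\infty(0,T;L^1)$ provides the equi-integrability of $\varrho|u|^2$ (Dunford--Pettis), which combined with the compactness of Lemmas \ref{convergence1}--\ref{convergence2} yields the strong convergence of $\sqrt{\varrho}u$ in $L^2$ as in \cite{mellet-vasseur}; this is sufficient to pass to the limit in $\ddiv(\varrho u\otimes u)$ and $r_1\varrho|u|^2u$. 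The energy and Bresch--Desjardins estimates (\ref{energy3})--(\ref{BD3}) with $\kappa=0$ then follow from the approximate ones by weak lower semicontinuity of the convex terms.
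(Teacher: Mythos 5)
Your proposal is correct and follows essentially the same route as the paper: renormalization of the momentum equation with the truncation $v=\phi_m^0(\varrho)\phi_k^\infty(\varrho)u$ and the approximants $F_n$, coupling of the resulting identity with the transport of the double moment $\iint F_n(|x-y|)\varrho(x)\varrho(y)\,\dd x\,\dd y$, absorption of the quadratic tail of $K_L$ via the generalized Young inequality for the pair $(F_n,F_n^*)$, a weak Gronwall argument, and the simultaneous limit $k\to\infty$, $\kappa\to0$. The only point you leave implicit is the repulsive part $|x|^{-\alpha}$ of the kernel, which the paper controls by Riesz-potential bounds on $\nabla\varrho\in L^\infty(0,T;L^{3/2})$; this is consistent with your remark that only the unbounded attractive part escapes the integrability-based treatment.
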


\begin{proof} The strategy of the proof is based on the approach from \cite{vasseur-yu2}, however the need to incorporate the nonlocal term imposes some key differences in the method. Let
\[ F(z)=\frac{1+z^2}{2}\ln(1+z^2), \quad \psi(z)=\frac{1}{z}F'(z)=1+\ln(1+z^2) \]
for $z>0$. To get (\ref{MV}), we would like to test the momentum equation by the function $F'(|u|^2)\frac{u}{|u|}$. However, the regularity of the solution coming from the estimates (\ref{main_energy}) and (\ref{main_BD}) does not allow that (in particular, they do not provide any Sobolev regularity on $u$ itself). Instead, we will introduce a suitable approximation of the function $F'(|u|^2)\frac{u}{|u|}$, which will allow us to perform the needed renormalization of the momentum equation. Then (\ref{MV}) is obtained by passing to the limit. 

\paragraph{Preparation of initial data.} Note that our approximation of the initial data satisfies in particular
\[ \tilde\varrho_{0,L}\geq \frac{1}{m_1}. \]
In order to get the suitable continuity of $\varrho$ and $u$, we need to further truncate the initial data. Hence we will first derive the desired inequality assuming that
\begin{equation}\label{initial_modified} \tilde\varrho_{0,L}|u_{0,L}|^2\in L^\infty(\T_L^3). \end{equation}
Under this additional assumption we show that
\begin{prop} For $(\varrho,u)$ solving (\ref{main2}), we have
    \[ \varrho \in C(0,T;L^2) 
  \mbox{ \ \   and \ \ }
 \sqrt{\varrho}u \in C(0,T;L^2). \]
\end{prop}
\begin{proof}
    Since $\varrho\in L^\infty(0,T;H^1)$,
    \[ \partial_t\sqrt{\varrho}\in L^2(0,T;L^2) \quad \text{and} \quad \sqrt{\varrho}\in L^2(0,T;H^2), \]
    we have
    \[ \sqrt{\varrho}\in C(0,T;L^2) \quad \text{and} \quad \nabla\sqrt{\varrho}\in C(0,T;L^2). \]
   (the last convergence is shown by computing $\frac{\dd}{\dd t}\|\nabla\sqrt{\varrho}-\nabla\sqrt{\varrho_0}\|_{L^2}^2$).  As $\sqrt{\varrho}\in L^\infty(0,T;L^6)$, we therefore get
    \[ \sqrt{\varrho} \in C(0,T;L^p) \quad \text{for} \quad 2\leq p <6. \]
In particular, it also follows that $\varrho\in C(0,T;L^2).$
    Now we show that $\sqrt{\varrho}u\in C(0,T;L^2)$.
    By the estimates on $\partial_t(\varrho u)$ and $\varrho u$, we know that  $\varrho u\in C_{\mathrm{weak}}(0,T;L^{3/2}).$
We  estimate $|\sqrt\varrho u-\sqrt{\tilde\varrho_{0,L}}u_{0,L}|^2$ using the continuity properties of energy. Since the function $t\mapsto \int_{\T_L^3}\frac{|m(t,x)|^2}{\varrho(t,x)}\mathbbm{1}_{\{\varrho>0\}}\;\dd x$ is lower-semicontinuous (see also Lemma 7.19 in \cite{novotny-straskraba}), we have
\begin{multline*} \int_{\T_L^3} \tilde\varrho_{0,L}|u_{0,L}|^2 + \tilde\varrho_{0,L}(K_L\ast\tilde\varrho_{0,L})+\kappa|\nabla\sqrt{\tilde\varrho_{0,L}}|^2\;\dd x
\leq \liminf_{t\to 0}\int_{\T_L^3}\varrho|u|^2+\varrho(K_L\ast\varrho)+\kappa|\nabla\sqrt{\varrho}|^2\;\dd x. \end{multline*}
Combining that with the energy inequality, we get
    \begin{multline*} \lim_{t\to 0}\int_{\T_L^3} \left(\varrho|u|^2 + \varrho(K_L\ast\varrho) + \kappa|\nabla\sqrt{\varrho}|^2\right)\dd x 
    = \int_{\T_L^3} \left(\tilde\varrho_{0,L}|u_{0,L}|^2+\tilde\varrho_{,L}(K_L\ast\tilde\varrho_{0,L}) + \kappa|\nabla\sqrt{\tilde\varrho_{0,L}}|^2\right)\dd x. \end{multline*} 
    With this information at hand, we write
    \[\begin{aligned}
    \int_{\T_L^3}|\sqrt{\varrho}u-\sqrt{\tilde\varrho_{0,L}}u_{0,L}|^2\dd x =& \int_{\T_L^3} \left(\varrho|u|^2 + \varrho(K_L\ast\varrho) + \kappa|\nabla\sqrt{\varrho}|^2\right)\dd x \\
        &- \int_{\T_L^3}\left(\tilde\varrho_{0,L}|u_{0,L}|^2+\tilde\varrho_{0,L}(K_L\ast\tilde\varrho_{0,L}) + \kappa|\nabla\sqrt{\tilde\varrho_{0,L}}|^2\right)\dd x \\
        &+2 \int_{\T_L^3}\sqrt{\tilde\varrho_{0,L}}u_{0,L}(\sqrt{\tilde\varrho_{0,L}}u_{0,L}-\sqrt{\varrho}u)\;\dd x \\
        &+ \int_{\T_L^3} (\tilde\varrho_{0,L}(K_L\ast\tilde\varrho_{0,L})-\varrho(K_L\ast\varrho)) \;\dd x \\
        &- \kappa\int_{\T_L^3} |\nabla\sqrt{\tilde\varrho_{0,L}}-\nabla\sqrt{\varrho}|^2\dd x \\
        &+ 2\kappa\int_{\T_L^3} \nabla\sqrt{\tilde\varrho_{0,L}}\big(\nabla\sqrt{\tilde\varrho_{0,L}}-\nabla\sqrt{\varrho}\big)\dd x.
    \end{aligned}\]
    From the continuity of $\nabla\sqrt{\varrho}$, we have
    \[ \lim_{t\to 0}\int_{\T_L^3} |\nabla\sqrt{\tilde\varrho_{0,L}}-\nabla\sqrt{\varrho}|^2\dd x = 0 \]
    and
    \[ \lim_{t\to 0}\int_{\T_L^3} \nabla\sqrt{\tilde\varrho_{0,L}}\big(\nabla\sqrt{\tilde\varrho_{0,L}}-\sqrt{\varrho}\big)\dd x = 0. \]
    Since $K_L\in L^p(\T_L^3)$ for $p<3/\alpha$, from the continuity of $\varrho$ it follows that $K_L\ast\varrho\in C(0,T;L^q)$ for $q<\frac{3}{\alpha-2}$ (or $\infty$ if $\alpha\leq 2$) and therefore
    \[ \lim_{t\to 0}\int_{\T_L^3} (\tilde\varrho_{0,L}(K_L\ast\tilde\varrho_{0,L})-\varrho(K_L\ast\varrho)) \dd x \to 0. \]
    In consequence,
     \begin{multline}\label{calc_rho_cont} \mathrm{ess}\limsup_{t\to 0}\int_{\T_L^3} |\sqrt{\varrho}u-\sqrt{\tilde\varrho_{0,L}}u_{0,L}|^2\dd x 
     = 2\mathrm{ess}\limsup_{t\to 0}\int_{\T_L^3} \sqrt{\tilde\varrho_{0,L}}u_{0,L}(\sqrt{\tilde\varrho_{0,L}}u_{0,L}-\sqrt{\varrho}u)\;\dd x.
    \end{multline}
    Now, let $\phi_{m_1}$ be a smooth cutoff function such that $\phi_{m_1}(\varrho)=1$ for $\varrho\geq\frac{1}{m_1}$ and $\phi_{m_1}(\varrho)=0$ for $\varrho\leq\frac{1}{2m_1}$. Then we write
    \[\begin{aligned} \int_{\T_L^3} \sqrt{\tilde\varrho_{0,L}}u_{0,L}(\sqrt{\tilde\varrho_{0,L}}u_{0,L}-\sqrt{\varrho}u)\dd x =& \int_{\T_L^3}\sqrt{\tilde\varrho_{0,L}}u_{0,L}(\sqrt{\tilde\varrho_{0,L}}u_{0,L}-\phi_{m_1}(\varrho)\sqrt{\varrho}u) \dd x \\
    &- \int_{\T_L^3}\sqrt{\tilde\varrho_{0,L}}u_{0,L}(1-\phi_{m_1}(\varrho))\sqrt{\varrho}u\;\dd x \\
    =& B_1(t)+B_2(t).
    \end{aligned}\]
    For the term $B_1$, we use the relation
    \[\begin{aligned} \sqrt{\tilde\varrho_{0,L}}u_{0,L}(\sqrt{\tilde\varrho_{0,L}}u_{0,L}-\phi_{m_1}(\varrho)\sqrt{\varrho}u) =& \sqrt{\tilde\varrho_{0,L}}u_{0,L}\frac{\phi_{m_1}(\varrho)}{\sqrt{\varrho}}(\tilde\varrho_{0,L}u_{0,L}-\varrho u) \\
    &+ \tilde\varrho_{0,L}|u_{0,L}|^2\left(1-\frac{\sqrt{\tilde\varrho_{0,L}}}{\sqrt{\varrho}}\phi_{m_1}(\varrho)\right).
    \end{aligned}\]
    Since $\tilde\varrho_{0,L}\geq \frac{1}{m_1}$, we know that in particular $\phi_{m_1}(\tilde\varrho_{0,L})=1$ and then
    \[\begin{aligned} \lim_{t\to 0}B_1(t) =& \lim_{t\to 0}\int_{\T_L^3}\sqrt{\tilde\varrho_{0,L}}u_{0,L}\frac{\phi_{m_1}(\varrho)}{\sqrt{\varrho}}(\tilde\varrho_{0,L} u_{0,L}-\varrho u)\dd x \\
    &+ \lim_{t\to 0}\int_{\T_L^3}\sqrt{\tilde\varrho_{0,L}}|u_{0,L}|^2\sqrt{\tilde\varrho_{0,L}}\left(\frac{\phi_{m_1}(\tilde\varrho_{0,L}}{\sqrt{\tilde\varrho_{0,L}}}-\frac{\phi_{m_1}(\varrho)}{\sqrt{\varrho}}\right)\;\dd x = 0,
    \end{aligned}\]
    by the weak continuity of $\varrho u$ and strong continuity of $\varrho$. For $B_2$, by (\ref{initial_modified}) we have
    \[ |B_2(t)|\leq \|\sqrt{\tilde\varrho_{0,L}}u_{0,L}\|_{L^\infty(\T_L^3)}\|\sqrt{\varrho}u\|_{L^\infty(0,T;L^2)}\|1-\phi_{m_1}(\varrho)\|_{L^2(\T_L^3)}, \]
    which goes to zero as $t\to 0$ again from the strong continuity of $\varrho$.

    In consequence, 
    $\sqrt{\varrho}u \in C(0,T;L^2)$
    as we needed to prove.
\end{proof}

\paragraph{Preparation of the test function.} Let $\phi_m^0$ and $\phi_k^\infty$ be smooth cutoff functions at zero at infinity respectively, such that 
\[ \phi_m^0(\varrho) = 1 \quad \text{for} \quad \varrho>\frac{1}{m}, \quad \phi_m^0(\varrho)=0 \quad \text{for} \quad \varrho<\frac{1}{2m}, \quad |(\phi_m^0)'|\leq 2m, \]
and
\[ \phi_k^\infty(\varrho) = 1 \quad \text{for} \quad \varrho<k, \quad \phi_k^\infty(\varrho)=0, \quad \text{for} \quad \varrho>2k, \quad |(\phi_k^\infty)'|\leq \frac{2}{k}. \]
Then we define
\[ v_{m,k} = \phi_{m,k}(\varrho)u \quad \text{for} \quad \phi_{m,k}(\varrho)=\phi_m^0(\varrho)\phi_k^\infty(\varrho). \]
To simplify the notation, we will just write $v=v_{m,k}$ and $\phi=\phi_{m,k}$ when it does not raise confusion.
It turns out that $v$ has the $W^{1,2}$ regularity missing for $u$:

\begin{prop} We have
    \[ \nabla v \in L^2((0,T)\times\T_L^3). \]
\end{prop}
\begin{proof}
   By straightforward calculations,
   \[\begin{aligned} \nabla v = \phi'(\varrho)\nabla\varrho\otimes u + \phi(\varrho)\nabla u 
   = 4\phi'(\varrho)\sqrt{\varrho}\nabla\varrho^{1/4}\otimes\varrho^{1/4}u + \frac{\phi(\varrho)}{\sqrt{\varrho}}\sqrt{\varrho}\nabla u.
   \end{aligned}\]
   From the definition of $\phi$, 
   \[ \frac{\phi(\varrho)}{\sqrt{\varrho}}\leq \sqrt{2m} \quad \text{and} \quad |\phi'(\varrho)\sqrt{\varrho}|\leq \max\left(2m\cdot\frac{1}{\sqrt{m}},\frac{2}{k}\cdot\sqrt{2k}\right) \]
   and therefore
   \begin{multline*} \|\nabla v\|_{L^2((0,T)\times\T_L^3)}
   \leq C(m,k)\left(\|\nabla\varrho^{1/4}\|_{L^4((0,T)\times\T_L^3)}\|\varrho^{1/4}u\|_{L^4((0,T)\times\T_L^3)} + \|\sqrt{\varrho}\nabla u\|_{L^2((0,T)\times\T_L^3)}\right). \end{multline*}
\end{proof}

In order to construct a suitable test function, we need to approximate the functions $F$ and $\psi$ as well. Let
 \[ F_n(z) = \left\{\begin{aligned}
     \frac{1+z^2}{2}\ln(1+z^2), &\quad z\leq n, \\
     \left(nz+\frac{1-n^2}{2}\right)\ln(1+z^2), &\quad z>n
 \end{aligned}\right. \]
 and 
 \[ \psi_n(z) = \frac{1}{z}F_n'(z) = \left\{\begin{aligned}
     1+\ln(1+z^2), &\quad z\leq n, \\
     \frac{n}{z}\ln(1+z^2)+\frac{2nz+1-n^2}{1+z^2}, &\quad z>n
 \end{aligned}\right.\]
 That way 
 \[ F_n(z) \leq C_n|z|^{1+\delta} \quad 
 \text{and}
 \quad \psi_n(z)z = F_n'(z) \leq C_n|z|^{\delta}, \]
 for any $\delta\in (0,1)$ and some $C_n>0$. Since $F_n\leq \frac{1+z^2}{2}\ln(1+z^2)$ and $\psi_n\leq 1+\ln(1+z^2)$, we also have the estimates
 \[ F_n(z) \leq C+C|z|^{2+\delta}, \quad \psi_n(z)z \leq C+C|z|^{1+\delta}, \]
 where this time $C>0$ does not depend on $n$.
 
 Finally, we have the estimates for the second derivative: 
 \[ F_n''(z) = \psi_n'(z)z+\psi_n(z) = \left\{\begin{aligned}
     1+\ln(1+z^2)+\frac{2z^2}{1+z^2}, &\quad z\leq n, \\
     \frac{2nz}{1+z^2} + \frac{(n^2-1)z^2+4nz-(n^2-1)}{(1+z^2)^2}, &\quad z>n
 \end{aligned}\right.\]
 and thus it is positive and bounded. 
 
 Having the suitable approximations, we can state the first step towards the proof of Lemma \ref{MV_lem}.

\begin{lem}
    For any nonnegative $\xi(t)\in C_0^\infty(0,+\infty)$ we have
    \begin{multline}\label{renorm1}
    -\int_0^T\!\!\!\int_{\T_L^3} \xi'(t)\varrho F_n(|v|) \;\dd x\,\dd t + \int_0^T\!\!\!\int_{\T_L^3}\xi(t)\psi_n(|v|)v\cdot G \;\dd x\,\dd t \\
    + \int_0^T\!\!\!\int_{\T_L^3}\xi(t)S:\nabla(\psi_n(|v|)v) \;\dd x\,\dd t = 0, \end{multline}
    where 
    \[ S = \varrho\phi(\varrho)\left(\D u+\kappa\frac{\Delta\sqrt{\varrho}}{\sqrt{\varrho}}\mathbb{I}\right) \]
    and
    \[\begin{aligned}
        G =& \varrho^2u\phi'(\varrho)\ddiv u + \varrho\nabla\phi(\varrho)\D u + \phi(\varrho)\varrho\nabla(K_L\ast\varrho) + r_0u\phi(\varrho) \\
        &+ r_1\varrho|u|^2u\phi(\varrho) + \kappa\sqrt{\varrho}\nabla\phi(\varrho)\Delta\sqrt{\varrho} + 2\kappa\phi(\varrho)\nabla\sqrt{\varrho}\Delta\sqrt{\varrho}.
    \end{aligned}\]
\end{lem}
\begin{proof}
    Testing the momentum equation by $\phi(\varrho)\varphi$ for $\varphi\in C_0^\infty((0,T)\times\T_L^3)$, we obtain
    \begin{equation}\label{local_moment}
\begin{aligned} \partial_t(\varrho v) - \varrho u\phi'(\varrho)\partial_t\varrho + & \ddiv(\varrho u\otimes v)-\varrho u\otimes u\nabla\phi(\varrho) \\
    &-\ddiv(\phi(\varrho)\varrho\D u) +\varrho\D u\nabla\phi(\varrho) + \phi(\varrho)\varrho\nabla(K\ast\varrho) \\
    =& -r_0 v -r_1\phi(\varrho)\varrho|u|^2u \\
    &-\kappa\Big(\sqrt{\varrho}\Delta\sqrt{\varrho}\nabla\phi(\varrho)+ 2\phi(\varrho)\Delta\sqrt{\varrho}\nabla\sqrt{\varrho} - \nabla\big(\phi(\varrho)\sqrt{\varrho}\Delta\sqrt{\varrho}\big)\Big) \end{aligned}\end{equation}
    in the sense of distributions. Since
    \[ -\varrho u\phi'(\varrho)\partial_t\varrho -\varrho u\otimes u\nabla\phi(\varrho) = \varrho^2u\phi'(\varrho)\ddiv u, \]
    we can rewrite (\ref{local_moment}) as
    \begin{equation}\label{v} \partial_t(\varrho v) + \ddiv(\varrho u\otimes v) -\ddiv S + G = 0. \end{equation}

    Now, let us take $\xi\in C_0^\infty(0,+\infty)$. We test equation (\ref{v}) by $\Phi=(\xi(t)\psi_n(|v_\varepsilon|) v_\varepsilon)_\varepsilon$, where $f_\varepsilon=f\ast\eta_\varepsilon$ denotes the mollification over time and space. Note that since $\xi$ has compact support, for sufficiently small $\varepsilon$ the function $\Phi(t,\cdot)$ is well defined on $(0,\infty)$.
    Then we get
    \[ \int_0^T\!\!\!\int_{\T_L^3} \xi(t)\psi_n(|v_\varepsilon|)v_\varepsilon\cdot\big(\partial_t(\varrho v) + \ddiv(\varrho u\otimes v) -\ddiv S + G\big)_\varepsilon \;\dd x\,\dd t = 0. \]
    Let us rewrite the first two terms of the above. We have
    \[\begin{aligned} \int_0^T\!\!\!\int_{\T_L^3}\xi(t)\psi_n(|v_\varepsilon|)v_\varepsilon\cdot (\partial_t(\varrho v))_\varepsilon\;\dd x\,\dd t 
&= \int_0^T\!\!\!\int_{\T_L^3} \xi(t)\psi_n(|v_\varepsilon|)v_\varepsilon\partial_t(\varrho v_\varepsilon) \;\dd x\,\dd t + R_1 \\
    &= \int_0^T\!\!\!\int_{\T_L^3}\xi(t)\Big( \partial_t\varrho\psi_n(|v_\varepsilon|)|v_\varepsilon|^2 + \varrho\partial_t F_n(|v_\varepsilon|)\Big) \;\dd x\,\dd t + R_1,
    \end{aligned}\]
    where 
    \[ R_1 = \int_0^T\!\!\!\int_{\R^3} \xi(t)\psi_n(|v_\varepsilon|)v_\varepsilon\Big(\big(\partial_t(\varrho v)\big)_\varepsilon - \partial_t(\varrho v_\varepsilon)\Big) \;\dd x\,\dd t. \]
    Furthermore,
    \[\begin{aligned}
        \int_0^T\!\!\!\int_{\T_L^3} \xi(t)\psi_n(|v_\varepsilon|)v_\varepsilon\cdot & (\ddiv(\varrho u \otimes v))_\varepsilon \;\dd x\,\dd t \\
 &= \int_0^T\!\!\!\int_{\T_L^3} \xi(t)\psi_n(|v_\varepsilon|)v_\varepsilon\ddiv(\varrho u\otimes v_\varepsilon) \;\dd x\,\dd t + R_2 \\
        &= \int_0^T\!\!\!\int_{\T_L^3}\xi(t)\Big(-\partial_t\varrho\psi_n(|v_\varepsilon|)|v_\varepsilon|^2 + \partial_t\varrho F_n(|v_\varepsilon|)\Big) \;\dd x\,\dd t + R_2,
    \end{aligned}\]
    where
    \[ R_2 = \int_0^T\!\!\!\int_{\T_L^3}\xi(t)\psi_n(|v_\varepsilon|)v_\varepsilon\Big(\big(\ddiv(\varrho u\otimes v)\big)_\varepsilon-\ddiv(\varrho u\otimes v_\varepsilon)\Big) \;\dd x\,\dd t. \]
    In conclusion, we get
    \begin{multline*} \int_0^T\!\!\!\int_{\T_L^3} \xi(t)\partial_t(\varrho F_n(|v_\varepsilon|)) \;\dd x\,\dd t + R_1+R_2 \\
    -\int_0^T\!\!\!\int_{\T_L^3}\xi(t)\psi_n(|v_\varepsilon|)v_\varepsilon\big(\ddiv S\big)_\varepsilon \;\dd x\,\dd t + \int_0^T\!\!\!\int_{\T_L^3}\xi(t)\psi_n(|v_\varepsilon|)v_\varepsilon G_\varepsilon \;\dd x\,\dd t = 0. \end{multline*}
    Since $v\in L^2(0,T;H^1)$, $v_\varepsilon\to v$ in $L^2(0,T;H^1)$. In particular, up to a subsequence $v_\varepsilon\to v$ almost everywhere. For $1<p$ and $\delta>0$ such that $\frac{p}{3}+\frac{p(1+\delta)}{2}\leq 1$ and $p(1+\delta)<2$, by the definition of $F_n$ we have
    \[\begin{aligned}\|\xi'(t)\varrho F_n(|v_\varepsilon|)\|_{L^p((0,T)\times\T_L^3)}^p &\leq C_n\|\xi'\|_{L^\infty(0,T)}\int_0^T\!\!\!\int_{\T_L^3} \varrho^p|v_\varepsilon|^{p(1+\delta)}\;\dd x\,\dd t \\
&\leq C_n\|\xi'\|_{L^\infty(0,T)}\int_0^T\|\varrho\|_{L^3}^p\|v_\varepsilon\|_{L^2}^{p(1+\delta)} \;\dd t \\
    &\leq C(n,T)\|\varrho\|_{L^\infty(0,T;L^3)}^p\|v\|_{L^2((0,T)\times\T_L^3)}^{p(1+\delta)}. \end{aligned}\]
    Therefore $\xi'(t)\varrho F_n(|v_\varepsilon|)$ converges in $L^1((0,T)\times\T_L^3)$ and we have
    \[ \lim_{\varepsilon\to 0}\int_0^T\!\!\!\int_{\T_L^3} \xi'(t)\varrho F_n(|v_\varepsilon|) \;\dd x\,\dd t = \int_0^T\!\!\!\int_{\T_L^3} \xi'(t)\varrho F_n(|v|) \;\dd x\,\dd t. \]
    Since $G\in L^{4/3}((0,T)\times\T_L^3)$ and $\psi_n(|v_\varepsilon|)|v_\varepsilon|\leq C_n|v_\varepsilon|^\delta$, we similarly have the convergence
    \[ \lim_{\varepsilon\to 0}\int_0^T\!\!\!\int_{\T_L^3}\xi(t)\psi_n(|v_\varepsilon|)v_\varepsilon G_\varepsilon \;\dd x\,\dd t = \int_0^T\!\!\!\int_{\T_L^3}\xi(t)\psi_n(|v|)v\cdot G \;\dd x\,\dd t. \]
    Moreover, we have
    \[\begin{aligned} \int_0^T\!\!\!\int_{\T_L^3}\xi(t)\psi_n(|v_\varepsilon|)v_\varepsilon (\ddiv S)_\varepsilon \;\dd x\,\dd t &= -\int_0^T\!\!\!\int_{\T_L^3}\xi(t)S_\varepsilon:\nabla(\psi_n(|v_\varepsilon|)v_\varepsilon) \;\dd x\,\dd t \\
    &= -\int_0^T\!\!\!\int_{\T_L^3}\xi(t)(\psi_n'(|v_\varepsilon|)|v_\varepsilon|+\psi_n(|v_\varepsilon|))S_\varepsilon:\nabla v_\varepsilon \;\dd x\,\dd t.
    \end{aligned}\]
By virtue of the estimates on $\sqrt\varrho\D u$ and $\Delta\sqrt\varrho$, the function $S$ belongs to $L^2((0,T)\times\T_L^3$ and thus $S_\varepsilon\to S$ in $L^2((0,T)\times\T_L^3)$. Moreover $\nabla v_\varepsilon$ converges strongly in $L^2((0,T)\times\T_L^3)$ to $\nabla v$, and $\psi_n'(|v_\varepsilon|)|v_\varepsilon|+\psi_n(|v_\varepsilon|)$ is uniformly bounded in $L^\infty((0,T)\times\T_L^3)$. Therefore we have the convergence 
    \[ \lim_{\varepsilon\to 0}\int_0^T\!\!\!\int_{\T_L^3} \xi(t)\psi_n(|v_\varepsilon|)v_\varepsilon(\ddiv S)_\varepsilon \;\dd x\,\dd t = -\int_0^T\!\!\!\int_{\T_L^3}\xi(t)S:\nabla(\psi_n(|v|)v) \;\dd x\,\dd t. \]
    What is left is to show that $R_1,R_2\to 0$ as $\varepsilon\to 0$.
  
    To do that, we use the following commutator lemmas (see e. g. Lemma 2.3 in \cite{lions1}):
    \begin{lem}\label{comm1}
        Let $f\in (W^{1,p}(\R^d))^d$ and $g\in L^q(\R^d)$ with $\frac{1}{p}+\frac{1}{q}=\frac{1}{r}<1$. Then
        \[ \|(\ddiv(fg))_\varepsilon-\ddiv(fg_\varepsilon)\|_{L^r} \leq C\|f\|_{W^{1,p}}\|g\|_{L^q} \]
        for some $C>0$ independent of $\varepsilon$, and 
        \[ (\ddiv(fg))_\varepsilon - \ddiv(fg_\varepsilon) \to 0 \quad \text{in} \quad L^r(\R^d). \]
    \end{lem}
    Analogously with respect to time, we also have
    \begin{lem}\label{comm2}
        Let $f_t\in L^p((0,T))$ and $g\in L^q(0,T)$ with $\frac{1}{p}+\frac{1}{q}=\frac{1}{r}<1$. Then
        \[ \|(\partial_t(fg))_\varepsilon - \partial_t(fg_\varepsilon)\|_{L^r} \leq C\|f_t\|_{L^p}\|g\|_{L^q} \]
        and
        \[ (\partial_t(fg))_\varepsilon - \partial_t(fg_\varepsilon) \to 0 \quad \text{in} \quad L^r((0,T)).\]
    \end{lem}

    Now we apply the above lemmas to $R_1$ and $R_2$. By Sobolev embedding, $v\in L^2(0,T;L^6)$. For $R_2$, we have
    \[ \nabla(\varrho u)=\varrho^{1/2}\nabla\varrho^{1/4}\otimes\varrho^{1/4}u + \sqrt{\varrho}\sqrt{\varrho}\nabla u \in L^2(0,T;L^{3/2}). \]
    Therefore since $\psi_n(|v_\varepsilon|)|v_\varepsilon|\leq C_n|v_\varepsilon|^{1/3}$, from Lemma \ref{comm1} we get
    \[\begin{aligned} \Big|\int_{\T_L^3}\psi_n(|v_\varepsilon|)v_\varepsilon & \big((\ddiv(\varrho u\otimes v))_\varepsilon-\ddiv(\varrho u\otimes v_\varepsilon)\big)\dd x\Big| \\
    &\leq C_n\|v_\varepsilon\|_{L^2(\T_L^3)}^{1/3}\|(\ddiv(\varrho u\otimes v))_\varepsilon-\ddiv(\varrho u\otimes v_\varepsilon)\|_{L^{6/5}(\T_L^3)} \\
    &\leq C_n\|v\|_{L^\infty(0,T;L^2)}^{1/3}\|\nabla(\varrho u)\|_{W^{1,3/2}(\T_L^3)}\|v\|_{L^6(\T_L^3)}
    \end{aligned}\]
    and the right hand side is integrable in time (note that the $L^\infty(0,T;L^2)$ estimate on $v$ follows from the same regularity of $\sqrt{\varrho}u$). Thus from the Dominated Convergence Theorem
    \[ |R_2|\leq \|\xi\|_{L^\infty(0,T)}\int_0^T\!\!\!\int_{\T_L^3}\left|\psi_n(|v_\varepsilon|)v_\varepsilon\big((\ddiv(\varrho u\otimes v))_\varepsilon-\ddiv(\varrho u\otimes v_\varepsilon)\big)\right|\dd x\,\dd t \to 0 \]
as $\varepsilon\to 0$.
    For $R_1$, first note that
    \[ v = \varrho^{-1/4}\phi(\varrho)\varrho^{1/4}u \in L^4((0,T)\times\T_L^3). \]
    Moreover,
    \[ \partial_t\varrho = 4\sqrt{\varrho}\nabla\varrho^{1/4}\varrho^{1/4}u + \sqrt{\varrho}\sqrt{\varrho}\ddiv u \in L^{3/2}((0,T)\times\T_L^3). \]
    Then similarly as before we have
    \[\begin{aligned} \left|\int_0^T\psi_n(|v_\varepsilon|)v_\varepsilon\big((\partial_t(\varrho v))_\varepsilon-\partial_t(\varrho v_\varepsilon)\big)\dd t\right| &\leq C_n\|v_\varepsilon\|_{L^4(0,T)}^{1/3}\|(\partial_t(\varrho v))_\varepsilon-\partial_t(\varrho v_\varepsilon)\|_{L^{12/11}(0,T)} \\
    &\leq C_n\|v\|_{L^4(0,T)}^{4/3}\|\partial_t\varrho\|_{L^{3/2}(0,T)} \end{aligned}\]
    and from the Dominated Convergence Theorem
    \[ |R_1| \leq \|\xi\|_{L^\infty(0,T)}\int_0^T\!\!\!\int_{\T_L^3}\left|\psi_n(|v_\varepsilon|)v_\varepsilon\big((\partial_t(\varrho v))_\varepsilon - \partial_t(\varrho v_\varepsilon)\big)\right|\dd x\,\dd t \to 0. \]
Note that in this case we use the estimates in Lebesgue spaces on $(0,T)\times\T_L^3$ instead of Bochner spaces with different exponents over time and space, which allows us to change the order of integration.

    In consequence, when $\varepsilon\to 0$ we derive (\ref{renorm1})
 for any $\xi\in C_0^\infty(0,+\infty)$.
\end{proof}

The next step of the proof of Lemma \ref{MV_lem} is based on application of the Weak Gronwall's Lemma (Lemma \ref{weak_gronwall}). Let us rewrite (\ref{renorm1}) as
\begin{multline*} -\int_0^T\!\!\!\int_{\T_L^3} \xi'(t)\varrho F_n(|v|) \;\dd x\,\dd t 
= -\int_0^T\!\!\!\int_{\T_L^3}\xi(t)\psi_n(|v|)v\cdot\phi(\varrho)\varrho\nabla(K_L\ast\varrho) \;\dd x\,\dd t - \int_0^T\xi(t)b(t) \;\dd t, \end{multline*}
where $b(t)$ contains the rest of the terms from $G$ and $S$, i. e.
\[\begin{aligned} b(t) =& \int_{\T_L^3}\psi_n(|v|)v\cdot\Big[\varrho^2u\phi'(\varrho)\ddiv u + \varrho\nabla\phi(\varrho)\D u + \phi(\varrho)\varrho\nabla(K_L\ast\varrho) \\
&+ r_0u\phi(\varrho) + r_1\varrho|u|^2u\phi(\varrho) + \kappa\sqrt{\varrho}\nabla\phi(\varrho)\Delta\sqrt{\varrho} + 2\kappa\phi(\varrho)\nabla\sqrt{\varrho}\Delta\sqrt{\varrho}\Big]\;\dd x \\
&+ \int_{\T_L^3} \varrho\phi(\varrho)\left(\D u+\kappa\frac{\Delta\sqrt{\varrho}}{\sqrt{\varrho}}\mathbb{I}\right):\nabla(\psi_n(|v|)v)\;\dd x. \end{aligned} \]
 We first focus on the nonlocal term. From the definition of $K_L$, we have
\[\begin{aligned} |\nabla(K_L\ast\varrho)(x)| =& \left|\frac{\phi_L(\cdot)}{|\cdot|^\alpha}\ast\nabla\varrho + \frac{1}{2}\int_{\R^3}\nabla\big(|x-y|^2\phi_L(x-y)\big)\varrho(y)\;\dd y\right| \\
\leq & \left|\frac{\phi_L(\cdot)}{|\cdot|^\alpha}\ast\nabla\varrho\right| + C\int_{|x-y|<L}|x-y|\varrho(y)\;\dd y,
\end{aligned}\]
where in the last term we used (\ref{prop:phi}). Therefore we get
\[\begin{aligned} &\Bigg|\int_0^T\!\!\!\int_{\T_L^3}\xi\psi_n(|v|)v  \phi(\varrho)\varrho\nabla(K_L\ast\varrho)\;\dd x\,\dd t\Bigg| \\
\leq & \int_0^T\xi\int_{\T_L^3}\psi_n(|v|)|v|\varrho \left|\frac{\phi_L(\cdot)}{|\cdot|^\alpha}\ast\nabla\varrho\right| \;\dd x\,\dd t + C\int_0^T\xi\iint_{\T_L^{3\times 3}}\psi_n(|v(x)|)|v(x)|\varrho(x)\varrho(y)|x-y| \;\dd x\,\dd y\,\dd t \\
=& A_1 + A_2.
\end{aligned}\]
To estimate $A_1$, we use the estimates on Riesz potentials. For $f\in L^1(\T_L^3)$, let
\[ I_{3-\alpha}(f)=\int_{\T_L^3}\frac{f(y)}{|x-y|^\alpha}\;\dd y. \]
Then, in particular
\[ \|I_{3-\alpha}(f)\|_{L^{p^*}(\T_L^3)}\leq C\|f\|_{L^p(\T_L^3)} \quad \text{for} \quad p^*=\frac{3p}{3-(3-\alpha)p}. \]
From (\ref{BD3}), we have
\[ \|\nabla\varrho\|_{L^\infty(0,T;L^{3/2})} \leq 2\|\sqrt{\varrho}\|_{L^\infty(0,T;L^6)}\|\nabla\sqrt{\varrho}\|_{L^\infty(0,T;L^2)}\leq C \]
and thus
\[ \|I_{3-\alpha}(\nabla\varrho)\|_{L^\infty(0,T;L^q)} \leq C\|\nabla\varrho\|_{L^\infty(0,T;L^{3/2})} \quad \text{for} \quad q=\frac{3\cdot 3/2}{3-(3-\alpha)\cdot 3/2} = \frac{3}{\alpha-1}, \]
if $\alpha\leq 1$, then $q<\infty$.
Since $\psi_n(|v|)|v|\leq C+C|v|^{1+\delta}$, the integral in $A_1$ is estimated by
\[\begin{aligned} \int_{\T_L^3}|v|^{1+\delta}\varrho\left|\frac{\phi_L(\cdot)}{|\cdot|^\alpha}\ast\nabla\varrho\right|\;\dd x &= \int_{\T_L^3}|\sqrt{\varrho}v|^{1+\delta}\varrho^{\frac{1-\delta}{2}}|I_{3-\alpha}(\nabla\varrho)|\;\dd x \\
&\leq \|\sqrt{\varrho}v\|_{L^\infty(0,T;L^2)}^{1+\delta}\|\varrho\|_{L^\infty(0,T;L^3)}^{\frac{1-\delta}{2}}\|I_{3-\alpha}(\nabla\varrho)\|_{L^\infty(0,T;L^{\frac{3}{1-\delta}})} \\
&\leq C\|\sqrt{\varrho}v\|_{L^\infty(0,T;L^2)}^{1+\delta}\|\varrho\|_{L^\infty(0,T;L^3)}^{\frac{1-\delta}{2}}\|\nabla\varrho\|_{L^\infty(0,T;L^{3/2})},
\end{aligned}\]
provided that $\frac{3}{1-\delta}\leq \frac{3}{\alpha-1}$ (which is vaild for $\alpha<2$ and sufficiently small $\delta$). In the end, we get $|A_1|\leq C$,
where $C$ depends on the right hand sides of (\ref{energy3}) and (\ref{BD3}) (in particular it does not depend on $n,m,k,r_0,r_1$ and $\kappa$). 

For the term $A_2$, we use the following generalized Young inequality for convex functions:
\begin{equation}\label{young} ab \leq F(a)+F^*(b), \quad a,b\in\R, \end{equation}
where $F^*$ is a convex conjugate of $F$, given by
\[ F^*(s) = \sup\{sz-F(z): z\in\mathbb{R}\}. \]
The proof of (\ref{young}) is elementary, since straight from the definition of $F^*$
\[ ab-F(a) \leq \sup\{bz-F(z): \; z\in\R\}=F^*(b). \]
Applying this inequality to $A_2$, we get 

\[\begin{aligned}
    A_2 =& C\int_0^T\xi(t)\iint_{\T_L^3\times\T_L^3} F_n'(|v(x)|)|x-y|\varrho(x)\varrho(y)\;\dd x\,\dd y\dd t \\
    \leq & C\int_0^T\xi(t)\iint_{\T_L^3\times\T_L^3} F_n^*(F_n'(|v(x)|))\varrho(x)\varrho(y)\;\dd x\,\dd y\dd t \\
    &+ C\int_0^T\xi(t)\iint_{\T_L^3\times\T_L^3} F_n(|x-y|)\varrho(x)\varrho(y)\;\dd x\,\dd y\dd t.
\end{aligned}\]

To further simplify the estimate, we use the following Proposition:
\begin{prop}\label{convex_prop}
If $F\in C^1(\R)$ is strictly convex and such that 
\[ zF'(z) \leq aF(z) \]
for some $a>1$, then
\[ F^*(F'(z)) \leq (a-1)F(z). \]
\end{prop}
\begin{proof}
    Fix $s\in F'(\R)$ and let $g(z)=sz-F(z)$. Then
    \[ g'(z) = s-F'(z) \]
    and as $F'$ is increasing, $g$ attains a maximum at $z^*=(F')^{-1}(s)$. In consequence, $F^*$ is explicitly given by
    \[ F^*(s) = g(z^*) = s(F')^{-1}(s) - F((F')^{-1}(s)). \]
    Therefore
    \[ F^*(F'(z)) = F'(z)(F')^{-1}(F'(z)) - F((F')^{-1}(F'(z))) = zF'(z)-F(z) \leq (a-1)F(z), \]
    which finishes the proof.
\end{proof}
One can check that $zF_n'(z)\leq 4F_n(z)$ for sufficiently large $n$ and thus it satisfies the assumptions of Proposition \ref{convex_prop}. Therefore finally we derive
\[ A_2 \leq C\int_0^T\xi(t)\iint_{\T_L^3\times\T_L^3}\varrho F_n(|v|)\;\dd x\,\dd t + \int_0^T\xi(t)\iint_{\T_L^3\times\T_L^3} F_n(|x-y|)\varrho(x)\varrho(y)\;\dd x\,\dd y\dd t. \]

To close the estimate, we need to control the second term. To do this, we compute its derivative using the continuity equation and applying again the Young inequality. From the continuity equation, we have
\[\begin{aligned}
\frac{\dd}{\dd t}\iint_{\T_L^3\times\T_L^3} F_n(|x-y|) & \varrho(x)\varrho(y)\;\dd x\,\dd y \\
=& \iint_{\T_L^3\times\T_L^3} F_n(|x-y|)(\partial_t\varrho(x)\varrho(y)+\varrho(x)\partial_t\varrho(y))\;\dd x\,\dd y \\
=& -\iint_{\T_L^3\times\T_L^3}F_n(|x-y|)\ddiv_x(\varrho u)(x)\varrho(y)\;\dd x\,\dd y \\
=& 2\iint_{\T_L^3\times\T_L^3} F_n'(|x-y|)\frac{x-y}{|x-y|}\cdot u(x)\varrho(x)\varrho(y)\;\dd x\,\dd y \\
=& 2\iint_{\T_L^3\times\T_L^3} F_n'(|x-y|)\frac{x-y}{|x-y|}v(x)\varrho(x)\varrho(y)\;\dd x\,\dd y \\
&+ 2\iint_{\T_L^3\times\T_L^3} F_n'(|x-y|)\frac{x-y}{|x-y|}((1-\phi(\varrho))u)(x)\varrho(x)\varrho(y) \;\dd x\,\dd y. \end{aligned}\]
Therefore applying Young inequality and Proposition \ref{convex_prop}, we obtain
\[\begin{aligned}
\frac{\dd}{\dd t}\iint_{\T_L^3\times\T_L^3} F_n(|x-y|) & \varrho(x)\varrho(y)\;\dd x\,\dd y \\
\leq & 2\iint_{\T_L^3\times\T_L^3} F_n^*(F_n'(|x-y|))\varrho(x)\varrho(y)\;\dd x\,\dd y \\
&+ 2\|\varrho\|_{L^1(\T_L^3)}\int_{\R^3}\varrho F_n(|v|)\;\dd x \\
&+ 2\iint_{\T_L^3\times\T_L^3} F_n'(|x-y|)(1-\phi(\varrho(x)))|u(x)|\varrho(x)\varrho(y)\;\dd x\,\dd y \\
\leq & C\iint_{\T_L^3\times\T_L^3} F_n(|x-y|)\varrho(x)\varrho(y)\;\dd x\,\dd y + C\int_{\T_L^3}\varrho F_n(|v|) \;\dd x \\
&+ C_n\iint_{\T_L^3\times\T_L^3}|x-y|^\delta(1-\phi(\varrho(x)))|u(x)|\varrho(x)\varrho(y)\;\dd x\,\dd y.
\end{aligned} \]

In consequence we obtain
\begin{multline*} -\int_0^T\xi'(t)f(t)\;\dd t \leq C\int_0^T\xi(t)f(t)\;\dd t \\
+ \int_0^T\xi(t)\left(-b(t)+ C+C_n\iint_{\T_L^3\times\T_L^3}|u(x)||x-y|^\delta(1-\phi(\varrho(x)))\varrho(x)\varrho(y)\;\dd x\,\dd y\right), \end{multline*}
where
\[ f(t) = \int_{\T_L^3} \varrho F_n(|v|)\;\dd x + \iint_{\T_L^3\times\T_L^3}F_n(|x-y|)\varrho(x)\varrho(y)\;\dd x\,\dd y \]
and
\begin{equation}\label{b}
\begin{aligned}
    b(t) &= \int_{\T_L^3} \psi_n(|v|)v\cdot\Big(\varrho^2u\phi'(\varrho)\ddiv u + \varrho\nabla\phi(\varrho)\D u + r_0u\phi(\varrho) + r_1\varrho|u|^2u\phi(\varrho) \\
    &+ \kappa\sqrt{\varrho}\nabla\phi(\varrho)\Delta\sqrt{\varrho} + 2\kappa\phi(\varrho)\nabla\sqrt{\varrho}\Delta\sqrt{\varrho}\Big) \;\dd x + \int_{\T_L^3} \varrho\phi(\varrho)\left(\D u+\kappa\frac{\Delta\sqrt{\varrho}}{\sqrt{\varrho}}\mathbb{I}\right):\nabla(\psi_n(|v|)v)\;\dd x \\
&= J_1(t) +J_2(t).
\end{aligned}\end{equation}

Now applying weak Gronwall's lemma (Lemma \ref{weak_gronwall}) and using the continuity in time of $\sqrt{\varrho}$ and $\sqrt{\varrho}u$, we get for a. e. $t\in [0,T]$

\begin{equation}\label{MV_appr}
\begin{aligned} \int_{\T_L^3} \varrho F_n(|v|)\;\dd x +& \iint_{\T_L^3\times\T_L^3}F_n(|x-y|)\varrho(x)\varrho(y)\;\dd x\,\dd y \\
\leq & e^{CT}\left(\int_{\T_L^3}\varrho_0 F_n(|v_0|)\;\dd x + \iint_{\T_L^3\times\T_L^3}F_n(|x-y|)\varrho_0(x)\varrho_0(y)\;\dd x\,\dd y\right) \\
&- e^{CT}\int_0^T b(t)\dd t + CTe^{CT} \\ 
&+ C_n e^{CT}\int_0^T\iint_{\T_L^3\times\T_L^3}|u(x)||x-y|^\delta(1-\phi(\varrho(x)))\varrho(x)\varrho(y)\;\dd x\,\dd y\dd t, \end{aligned}\end{equation}
where the constant $C$ depends on $L$, but does not depend on $n,m,k,\kappa, r_0$ and $r_1$.

\subsection{Limit passage with $m\to\infty$}
We now pass to the limit with $m$ in (\ref{MV_appr}), i. e. remove the truncation of $\varrho$ at zero. 
Obviously
\[ v_m=\phi_m^0(\varrho)\phi_k^\infty(\varrho)u \to \phi_k^\infty(\varrho)u \quad \text{a. e.} \]
and $|v_m|\leq |u|$. 
Since $\varrho F_n(|u|)\leq C_n\varrho|u|^{1+\delta}$ is integrable, from the dominated convergence theorem we have
\[ \int_{\T_L^3}\varrho F_n(|v_m|)\;\dd x\to \int_{\T_L^3}\varrho F_n(\phi_k^\infty(\varrho)|u|) \;\dd x \]
as $m\to \infty$ and similarly
\[ \int_{\T_L^3}\varrho_0 F_n(|v_{0,m}|)\;\dd x \to \int_{\T_L^3}\varrho_0 F_n(\phi_k^\infty(\varrho_0)|u_0|) \;\dd x. \]
Since for $\delta<1$ the term $|u(x)||x-y|^\delta\varrho(x)\varrho(y)$ is integrable on $[0,T]\times\T_L^3\times\T_L^3$ by virtue of energy estimate (\ref{energy3}), the last term in (\ref{MV_appr}) converges to 
\[ \int_0^T\iint_{\T_L^3\times\T_L^3}|u(x)||x-y|^\delta(1-\phi_k^\infty(\varrho))\varrho(x)\varrho(y)\;\dd x\,\dd y\dd t. \]
Now we deal with the terms $J_1$ and $J_2$ of $b(t)$ (as in (\ref{b})). The convergence in all the terms will be a consequence of the following Proposition:
\begin{prop}\label{m_conv}
    If $\|a_m\|_{L^\infty((0,T)\times\Omega)}\leq C$, $a_m\to a$ a. e. and $f\in L^1((0,T)\times\Omega)$, then
    \[ \int_0^T\!\!\!\int_{\Omega}\phi_m^0(\varrho)a_m f \;\dd x\,\dd t \to \int_0^T\!\!\!\int_{\Omega} af\;\dd x\,\dd t \mbox{ \ \ 
    and \ \ }
     \int_0^T\!\!\!\int_{\Omega}|\varrho(\phi_m^0)'(\varrho)a_m f|\;\dd x\,\dd t\to 0 \]
    as $m\to \infty$.
\end{prop}
\begin{proof}
   Note that $\phi_m^0(\varrho)\to 1$ a. e. as $m\to\infty$. Since $|\phi_m^0(\varrho)f-f|\leq 2|f|$ and $|a_mf-af|\leq |f|(\|a_m\|_{L^\infty}+\|a\|_{L^\infty})$, by Dominated Convergence Theorem, 
    \[\int_0^T\!\!\!\int_\Omega |\phi_m^0(\varrho)f-f|\;\dd x\,\dd t \to 0 \quad \text{and} \quad \int_0^T\!\!\!\int_\Omega |a_mf-af|\;\dd x\,\dd t\to 0. \]
    Therefore
    \begin{multline*}\left|\int_0^T\!\!\!\int_\Omega \phi_m^0(\varrho)a_mf\;\dd x\,\dd t - \int_0^T\!\!\!\int_\Omega af\;\dd x\,\dd t\right| \\
    \leq \|a_m\|_{L^\infty}\int_0^T\!\!\!\int_\Omega |\phi_m^0(\varrho)f-f|\;\dd x\,\dd t + \int_0^T\!\!\!\int_\Omega|a_mf-af|\;\dd x\,\dd t \to 0. \end{multline*}
    
    For the second part of the Proposition, it is enough to notice that $|(\varrho\phi_m^0)'(\varrho)|\leq C$ and $(\phi_m^0)'(\varrho)\to 0$ a. e. Then again from the dominated convergence theorem,
    
    \[ \int_0^T\!\!\!\int_\Omega |\varrho(\phi_m^0)'(\varrho)a_mf|\;\dd x\,\dd t \to 0. \]
\end{proof}

We apply the above Proposition to each of the terms in $b(t)$. First, note that 
\[\begin{aligned} \nabla(\psi_n(|v_m|)v_m) &= \frac{\psi_n'(|v_m|)}{|v_m|}v_m\otimes v_m\nabla v_m + \psi_n(|v_m|)\nabla v_m \\
&= F_n''(|v_m|)\Big(\nabla\phi_m^0(\varrho)\otimes(\phi_k^\infty(\varrho)u_m) + \phi_m^0(\varrho)\big(\nabla\phi_k^\infty(\varrho)\otimes u_m + \phi_k^\infty(\varrho)\nabla u_m\big)\Big) \end{aligned}\]
and therefore
\begin{multline*} \int_0^T J_2(t)\;\dd t = \int_0^T\!\!\!\int_{\R^3} \varrho\phi(\varrho)\left(\D u+\kappa\frac{\Delta\sqrt{\varrho}}{\sqrt{\varrho}}\mathbb{I}\right):\nabla(\psi_n(|v_m|)v_m)\;\dd x\,\dd t \\
= \int_0^T\!\!\!\int_{\T_L^3} \phi_m^0(\varrho)a_mf_1\;\dd x\,\dd t + \int_0^T\!\!\!\int_{\T_L^3} \varrho(\phi_m^0)'(\varrho)a_mf_2 \;\dd x\,\dd t, \end{multline*}
where 
$a_m = \phi_m^0(\varrho)F_n''(|v_m|)$
and
\[\begin{aligned}
    f_1 &= \varrho\phi_k^\infty(\varrho)\left(\D u+\kappa\frac{\Delta\sqrt{\varrho}}{\sqrt{\varrho}}\mathbb{I}\right): \big(\nabla\phi_k^\infty(\varrho)\otimes u + \phi_k^\infty(\varrho)\nabla u\big), \\
    f_2 &= \phi_k^\infty(\varrho)\left(\D u+\kappa\frac{\Delta\sqrt{\varrho}}{\sqrt{\varrho}}\mathbb{I}\right):(\nabla\varrho\otimes(\phi_k^\infty(\varrho)u)).
\end{aligned} \]
By virtue of Proposition \ref{m_conv}, 
\begin{multline*} \lim_{m\to\infty}\int_0^T\!\!\!\int_{\T_L^3} \varrho\phi_{m,k}(\varrho)\left(\D u+\kappa\frac{\Delta\sqrt{\varrho}}{\sqrt{\varrho}}\mathbb{I}\right):\nabla(\psi_n(|v_m|)v_m)\;\dd x\,\dd t \\
= \int_0^T\!\!\!\int_{\T_L^3} \varrho\phi_k^\infty(\varrho)\left(\D u+\kappa\frac{\Delta\sqrt{\varrho}}{\sqrt{\varrho}}\mathbb{I}\right):\nabla(\psi_n(|\phi_k^\infty(\varrho)u|)\phi_k^\infty(\varrho)u)\;\dd x\,\dd t. \end{multline*}
Now we deal with $J_1$. From (\ref{b}), we see that
\[\begin{aligned}
J_1 =& \int_{\T_L^3} \psi_n(|v_m|)v_m\cdot\Big(\varrho^2u\phi'(\varrho)\ddiv u + \varrho\nabla\phi(\varrho)\D u \\
&+ r_0u\phi(\varrho) + r_1\varrho|u|^2u\phi(\varrho) + \kappa\sqrt{\varrho}\nabla\phi(\varrho)\Delta\sqrt{\varrho} + 2\kappa\phi(\varrho)\nabla\sqrt{\varrho}\Delta\sqrt{\varrho}\Big) \;\dd x \\
=&  \int_{\T_L^3} \psi_n(|v_m|)\phi_m^0(\varrho)\cdot\phi_k^\infty(\varrho)u\cdot\Big(\varrho^2u\phi'(\varrho)\ddiv u + \varrho\nabla\phi(\varrho)\D u \\
&+ r_0u\phi(\varrho) + r_1\varrho|u|^2u\phi(\varrho) + \kappa\sqrt{\varrho}\nabla\phi(\varrho)\Delta\sqrt{\varrho} + 2\kappa\phi(\varrho)\nabla\sqrt{\varrho}\Delta\sqrt{\varrho}\Big) \;\dd x.
\end{aligned} \]
We  group all the terms in $J_1$ with respect to $\phi_m^0(\varrho)$ and $\varrho(\phi_m^0)'(\varrho)$. Let
\[ b_m = \phi_m^0(\varrho)\psi_n(|v_m|), \quad
    g_1 = \phi_k^\infty(\varrho)^2\left(\varrho|u|^2\ddiv u + u\cdot\D u\cdot\nabla\varrho + \kappa u \cdot\frac{1}{\sqrt{\varrho}}\Delta\sqrt{\varrho}\right) \]
and
\[\begin{aligned}
    g_2 =& \phi_k^\infty(\varrho)(\phi_k^\infty)'(\varrho)\varrho^2|u|^2\ddiv u + \phi_k^\infty(\varrho)(\phi_k^\infty)'(\varrho)\varrho u\cdot\D u\cdot\nabla\varrho + r_0\phi_k^\infty(\varrho)^2|u|^2 \\
&+ r_1\phi_k^\infty(\varrho)^2\varrho|u|^4 + \kappa\phi_k^\infty(\varrho)u\cdot(\phi_k^\infty)'(\varrho)\sqrt{\varrho}\nabla\varrho\Delta\sqrt{\varrho} + 2\kappa\phi_k^\infty(\varrho)^2u\cdot\nabla\sqrt{\varrho}\Delta\sqrt{\varrho}.
\end{aligned}\]
Then
\[\begin{aligned}
  \int_0^T J_1(t)\;\dd t
    =& \int_0^T\!\!\!\int_{\T_L^3}\varrho(\phi_m^0)'(\varrho)b_mg_1 \;\dd x\,\dd t + \int_0^T\!\!\!\int_{\T_L^3}\phi_m^0(\varrho)b_mg_2 \;\dd x\,\dd t
\end{aligned}\]
and therefore from Proposition \ref{m_conv}
\[ \begin{aligned}
\lim_{m\to \infty}\int_0^T J_1(t)\;\dd t =& \int_0^T\!\!\!\int_{\T_L^3} \psi_n(|\phi_k^\infty(\varrho)u|)\phi_k^\infty(\varrho)u\cdot\Big(\varrho^2u(\phi_k^\infty)'(\varrho)\ddiv u + \varrho\nabla\phi_k^\infty(\varrho)\D u \\
    &+ r_0u\phi_k^\infty(\varrho) + r_1\varrho|u|^2u\phi_k^\infty(\varrho) \\
&+ \kappa\sqrt{\varrho}\nabla\phi_k^\infty(\varrho)\Delta\sqrt{\varrho} + 2\kappa\phi_k^\infty(\varrho)\nabla\sqrt{\varrho}\Delta\sqrt{\varrho}\Big) \;\dd x\,\dd t.
\end{aligned} \]
Combining $J_1$ and $J_2$, in the end we get
\[\begin{aligned}
    \lim_{m\to\infty}\int_0^T b(t)\dd t =& \int_0^T\!\!\!\int_{\T_L^3} \psi_n(|\phi_k^\infty(\varrho)u|)\phi_k^\infty(\varrho)u\cdot\Big(\varrho^2u(\phi_k^\infty)'(\varrho)\ddiv u + \varrho\nabla\phi_k^\infty(\varrho)\D u \\
    &+ r_0\phi_k^\infty(\varrho)u + r_1\phi_k^\infty(\varrho)\varrho|u|^2u \\
&+ \kappa\sqrt{\varrho}\nabla\phi_k^\infty(\varrho)\Delta\sqrt{\varrho} + 2\kappa\phi_k^\infty(\varrho)\nabla\sqrt{\varrho}\Delta\sqrt{\varrho}\Big) \;\dd x\,\dd t \\
    &+ \int_0^T\!\!\!\int_{\T_L^3} \varrho\phi_k^\infty(\varrho)\left(\D u+\kappa\frac{\Delta\sqrt{\varrho}}{\sqrt{\varrho}}\mathbb{I}\right):\nabla(\psi_n(|\phi_k^\infty(\varrho)u|)\phi_k^\infty(\varrho)u)\;\dd x\, \dd t.
\end{aligned}\]
Since 
\[ \int_0^T\!\!\!\int_{\T_L^3} \psi_n(|\phi_k^\infty(\varrho)u|)(\phi_k^\infty)^2(\varrho)\left(r_0|u|^2+r_1\varrho|u|^4\right) \;\dd x\,\dd t \geq 0, \]
after taking $m\to\infty$ we finally obtain the following estimate:
\begin{equation}\label{after_m}
\begin{aligned} \int_{\T_L^3} \varrho F_n(|v_k|) & \;\dd x + \iint_{\T_L^3\times\T_L^3}F_n(|x-y|)\varrho(x)\varrho(y)\;\dd x\,\dd y \\
\leq & e^{CT}\left(\int_{\T_L^3}\varrho_0 F_n(|\phi_k^\infty(\varrho_0)u_0|)\;\dd x + \iint_{\T_L^{3\times 3}}F_n(|x-y|)\varrho_0(x)\varrho_0(y)\;\dd x\,\dd y\right) \\
&- e^{CT}\int_0^T b(t)\dd t + CTe^{CT} \\ 
&+ C_n e^{CT}\int_0^T\iint_{\T_L^{3\times 3}}|u(x)||x-y|^\delta(1-\phi_k^\infty(\varrho(x)))\varrho(x)\varrho(y)\;\dd x\,\dd y\dd t \end{aligned}\end{equation}
for $v_k=\phi_k^\infty(\varrho)u$, where
\begin{equation}\label{b2}
\begin{aligned}
    b(t) =& \int_{\T_L^3} \psi_n(|v_k|)v_k\cdot\Big(\varrho^2u(\phi_k^\infty)'(\varrho)\ddiv u + \varrho\nabla\phi_k^\infty(\varrho)\D u \\
&+ \kappa\sqrt{\varrho}\nabla\phi_k^\infty(\varrho)\Delta\sqrt{\varrho} + 2\kappa\phi_k^\infty(\varrho)\nabla\sqrt{\varrho}\Delta\sqrt{\varrho}\Big) \;\dd x \\
    &+ \int_{\T_L^3} \varrho\phi_k^\infty(\varrho)\left(\D u+\kappa\frac{\Delta\sqrt{\varrho}}{\sqrt{\varrho}}\mathbb{I}\right):\nabla(\psi_n(|v_k|)v_k)\;\dd x \\
&= \tilde J_1+\tilde J_2.
\end{aligned}\end{equation}

\subsection{Limit passage with $\kappa\to 0$ and $k\to \infty$}

We choose the parameters $\kappa$ and $k$, so that we can pass to the limit in (\ref{after_m}) and (\ref{main2}) with both of them at the same time. Fix $\delta\leq 2/3$ and let $k=\kappa^{-2/\delta}$. We have the following lemma:
\begin{lem}
    If $(\varrho_\kappa,u_\kappa)$ is the solution to (\ref{main2}) and $v_\kappa=\phi_k^\infty(\varrho_\kappa)u_\kappa$, then $(\varrho_\kappa,u_\kappa)$ converges to a solution $(\varrho,u)$ to (\ref{main2}) with $\kappa=0$, and the limit satisfies for a. e. $\in[0,T]$
   \begin{multline}\label{MV_n}
        \int_{\T_L^3}\varrho F_n(|u|)\;\dd x + \iint_{\T_L^3\times\T_L^3} F_n(|x-y|)\varrho(x)\varrho(y)\;\dd x\,\dd y \\
        \leq C\left(\int_{\T_L^3}\varrho_0 F_n(|u_0|)\;\dd x + \iint_{\T_L^3\times\T_L^3} F_n(|x-y|)\varrho_0(x)\varrho_0(y)\;\dd x\,\dd y\right) + C,
    \end{multline}
    where $C$ depends on $T$ and on the right hand sides of (\ref{energy3}) and (\ref{BD3}).
\end{lem}
\begin{proof}

Since $\|\partial_t(\varrho_\kappa,u_\kappa)\|_{L^2(0,T;W^{-1,4})}\leq C$, by Lemma \ref{convergence1}, we have
\[ \varrho_\kappa\to \varrho \quad \text{in} \quad \text{in} \quad C(0,T;L^{3/2}) 
\text{ \ and \ }
 \varrho_\kappa u_\kappa\to \varrho u \quad \text{in} \quad L^2(0,T;L^{3/2}). \]
In consequence
    \[ \varrho_\kappa u_\kappa \to \varrho u \quad \text{a. e.} \]
    and for a. e. $(t,x)$ such that $\varrho(t,x)\neq 0$, we get
    \[ u_\kappa = \frac{\varrho_\kappa u_\kappa}{\varrho_\kappa} \to u. \]
    Therefore $v_\kappa(t,x) \to u(t,x)$
    as well. On the other hand, for a. e. $(t,x)$ where $\varrho(t,x)=0$ we have
    \[ \varrho_\kappa F_n(|v_\kappa|) \leq C_n \varrho_\kappa^{1-\delta}|\varrho_\kappa u_\kappa|^{\delta} \to 0 \]
    as $\kappa\to 0$. In consequence $\varrho_\kappa F_n(|v_\kappa|)\to \varrho F_n(|u|)$ a. e. Then Fatou's Lemma yields
    \[ \int_{\T_L^3}\varrho F_n(|u|)\;\dd x \leq \liminf_{\kappa\to 0}\int_{\T_L^3}\varrho_\kappa F_n(|v_\kappa|)\;\dd x. \]

    Let us pass to the limit with the terms on the right hand side of (\ref{after_m}) one by one. Similarly as before, 
    \[ \int_{\T_L^3} \varrho_0 F_n(|\phi_k^\infty(\varrho_0)u_0|) \;\dd x \to \int_{\T_L^3} \varrho_0 F_n(|u_0|) \;\dd x. \]
    Since, similarly as in the previous limit passage,
    \[ |u_\kappa(x)||x-y|^\delta(1-\phi_k^\infty(\varrho_\kappa(x)))\varrho_\kappa(x)\varrho_\kappa(y) \]
    is uniformly bounded in $L^p((0,T)\times\T_L^3\times\T_L^3)$ for some $p>1$, and convergent to $0$ a. e., we have
    \[ \lim_{\kappa\to 0}\int_0^T\iint_{\T_L^3\times\T_L^3}|u_\kappa(x)||x-y|^\delta(1-\phi_k^\infty(\varrho_\kappa(x)))\varrho_\kappa(x)\varrho_\kappa(y)\;\dd x\,\dd y\dd t = 0. \]
What is left is to estimate the terms $\tilde J_1,\tilde J_2$ defined in (\ref{b2}).
    For $\tilde J_1$, we respectively have the following bounds:

    \[\begin{aligned}
        \Big|\int_0^T\!\!\!\int_{\T_L^3} \psi_n(|\phi_k^\infty & (\varrho_\kappa) u_\kappa|)\phi_k^\infty(\varrho_\kappa)(\phi_k^\infty)'(\varrho_\kappa)\varrho_\kappa^2|u_\kappa|^2\ddiv u_\kappa \;\dd x\,\dd t\Big| \\
        &\leq C_n k^{-\delta/4}\int_0^T\!\!\!\int_{\T_L^3}\varrho_\kappa^{1/4}|\varrho^{1/4}u_\kappa|^{1+\delta}\sqrt{\varrho_\kappa}\ddiv u_\kappa \;\dd x\,\dd t \\
        &\leq C(n,T)k^{-\delta/4}\|\varrho_\kappa\|_{L^\infty(0,T;L^{\frac{1}{1-\delta}})}^{1/4}\|\varrho_\kappa^{1/4}u_\kappa\|_{L^4(0,T;L^4)}^{1+\delta}\|\sqrt{\varrho_\kappa}\ddiv u_\kappa\|_{L^2(0,T;L^2)},
    \end{aligned}\]

    \[\begin{aligned}
        &\Big|\int_0^T \int_{\T_L^3}\psi_n(|\phi_k^\infty(\varrho_\kappa)u_\kappa|)\phi_k^\infty(\varrho_\kappa)u_\kappa\cdot\varrho_\kappa\nabla\phi_k^\infty(\varrho_\kappa)\D u_\kappa \;\dd x\,\dd t\Big| \\
        &\leq C_n k^{-\delta/4}\kappa^{-1/4}\int_0^T\!\!\!\int_{\T_L^3}\varrho^{1/4}|\varrho_\kappa^{1/4}u_\kappa|^\delta|\kappa^{1/4}\nabla\varrho_\kappa^{1/4}||\sqrt{\varrho_\kappa}\D u_\kappa|\;\dd x\,\dd t \\
        &\leq C(n,T) \kappa^{1/4}\|\varrho_\kappa\|_{L^\infty(0,T;L^{\frac{1}{1-\delta}})}^{1/4}\|\varrho_\kappa^{1/4}u_\kappa\|_{L^4(0,T;L^4)}^\delta\|\kappa^{1/4}\nabla\varrho_\kappa^{1/4}\|_{L^4(0,T;L^4)}\|\sqrt{\varrho_\kappa}\D u_\kappa\|_{L^2(0,T;L^2)},
    \end{aligned}\]
    
    \[\begin{aligned}
        \kappa\Big|\int_0^T \int_{\T_L^3} \psi_n(|\phi_k^\infty(\varrho_\kappa)u_\kappa|)\phi_k^\infty(\varrho_\kappa) & u_\kappa\cdot \sqrt{\varrho_\kappa}\nabla\phi_k^\infty(\varrho_\kappa)\Delta\sqrt{\varrho_\kappa}\;\dd x\,\dd t \Big| \\
        \leq & C_n\frac{\kappa^{1/4}}{k^{\delta/4}}\int_0^T\!\!\!\int_{\T_L^3}\varrho_\kappa^{1/4}|\varrho_\kappa^{1/4}u_\kappa|^\delta|\kappa^{1/4}\nabla\varrho_\kappa^{1/4}||\kappa^{1/2}\Delta\sqrt{\varrho_\kappa}| \;\dd x\,\dd t \\
        \leq & C(n,T)\frac{\kappa^{1/4}}{k^{\delta/4}}\|\varrho_\kappa\|_{L^\infty(0,T;L^{\frac{1}{1-\delta}})}^{1/4}\|\varrho_\kappa^{1/4}u_\kappa\|_{L^4(0,T;L^4)}^\delta \times\\
        &\times \|\kappa^{1/4}\nabla\varrho_\kappa^{1/4}\|_{L^4(0,T;L^4)}\|\kappa^{1/2}\Delta\sqrt{\varrho_\kappa}\|_{L^2(0,T;L^2)}
    \end{aligned}\]
    and

    \[\begin{aligned}
        \kappa\Big|\int_0^T\!\!\!\int_{\T_L^3} & \psi_n(|v_\kappa|)v_\kappa\cdot\phi_k^\infty(\varrho_\kappa)\nabla\sqrt{\varrho_\kappa}\Delta\sqrt{\varrho_\kappa}\;\dd x\,\dd t\Big| \\
        &\leq C_n\kappa^{1/4}\int_0^T\!\!\!\int_{\T_L^3}|\varrho_\kappa^{1/4}u_\kappa|^\delta\varrho_\kappa^{1/4}|\kappa^{1/4}\nabla\varrho_\kappa^{1/4}||\kappa^{1/2}\Delta\sqrt{\varrho_\kappa}|\;\dd x\,\dd t \\
        &\leq C(n,T)\kappa^{1/4}\|\varrho_\kappa^{1/4}u_\kappa\|_{L^4(0,T;L^4)}^\delta\|\kappa^{1/4}\nabla\varrho_\kappa^{1/4}\|_{L^4(0,T;L^4)}\|\kappa^{1/2}\Delta\sqrt{\varrho_\kappa}\|_{L^2(0,T;L^2)}.
    \end{aligned}\]
    By the estimates in (\ref{energy3}) and (\ref{BD3}), all above terms converge to $0$ with $k\to\infty,\kappa\to 0$ if $\delta\leq 2/3$. To estimate $\tilde J_2$, we further write
\[\begin{aligned} \tilde J_2 &= \int_{\T_L^3}\varrho_\kappa\phi_k^\infty(\varrho_\kappa)\D u_\kappa:\nabla(\psi_n(|v_k|)v_k)\;\dd x + \kappa\int_{\T_L^3}\varrho_\kappa\phi_k^\infty(\varrho_\kappa)\frac{\Delta\sqrt{\varrho_\kappa}}{\sqrt{\varrho_\kappa}}\ddiv(\psi_n(|v_k|)v_k)\;\dd x \\
&= S_1 + S_2. \end{aligned} \]
For $S_1$, we have
    \[\begin{aligned}
    \int_0^T S_1(t)\;\dd t =& \int_0^T\!\!\!\int_{\T_L^3}\varrho_\kappa\phi_k^\infty(\varrho_\kappa)\D u_\kappa:\nabla(\psi_n(|v_\kappa|)v_\kappa) \;\dd x\,\dd t \\
=& \int_0^T\!\!\!\int_{\T_L^3}\frac{\psi_n'(|v_\kappa|)}{|v_\kappa|}\varrho_\kappa(\phi_k^\infty)^2(\varrho_\kappa)\D u_\kappa:(v_\kappa\otimes v_\kappa\nabla v_\kappa)\;\dd x\,\dd t \\
    &+ \int_0^T\!\!\!\int_{\T_L^3} \psi_n(|v_\kappa|)\varrho_\kappa\phi_k^\infty(\varrho_\kappa)\D u_\kappa:\nabla v_\kappa\;\dd x\,\dd t \\
    =& A_1 + A_2.
    \end{aligned}\]

    The term $A_1$ is estimated as follows:
    \[\begin{aligned}
        |A_1| \leq & \int_0^T\!\!\!\int_{\T_L^3}|\psi_n'(|v_\kappa|)||v_\kappa|\varrho_\kappa(\phi_k^\infty)^2(\varrho_\kappa)|\nabla u_\kappa|^2\;\dd x\,\dd t \\
        &+ \int_0^T\!\!\!\int_{\T_L^3}|\psi_n'(|v_\kappa|)||v_\kappa|^2\varrho_\kappa\phi_k^\infty(\varrho_\kappa)|\D u_\kappa||\nabla\phi_k^\infty(\varrho_\kappa)|\;\dd x\,\dd t \\
        =& A_{1,1}+A_{1,2}.
    \end{aligned}\]
    Since $|\psi_n'(|v_\kappa|)||v_\kappa|\leq 2$ independently of $n$, we have 
    \[ A_{1,1} \leq 2\int_0^T\!\!\!\int_{\T_L^3}\varrho_\kappa|\nabla u_\kappa|^2\;\dd x\,\dd t, \]
     which is bounded uniformly by virtue of (\ref{energy3}) and (\ref{BD3}).

For $A_{1,2}$, from the definition of $\psi$ we have
\[ \psi_n'(z)z^2 = \left\{\begin{aligned} 
\frac{2z^3}{1+z^2}, \quad z\leq n, \\
-n\ln(1+z^2) + \frac{4nz^2-2z^3(1-n^2)}{(1+z^2)^2}, \quad z>n \end{aligned} \right. \]
Therefore 
\[ |\psi_n'(v_\kappa|)||v_\kappa|^2\leq C_n+C_n|v_\kappa|^\delta \]
 for some $C_n>0$ and thus
\[ A_{1,2} \leq C_n\int_0^T\!\!\!\int_{\T_L^3} \phi_k^\infty(\varrho_\kappa)(\phi_k^\infty)'(\varrho_\kappa)|\sqrt{\varrho_\kappa}\D u_\kappa||\nabla\varrho_\kappa^{1/4}|\left(\varrho_\kappa^{5/4}+\varrho_\kappa^{5/4-\delta/4}|\varrho_\kappa^{1/4}u_\kappa|^\delta\right) \;\dd x\,\dd t. \]
In consequence
\begin{multline*} A_{1,2} \leq C_n\|\sqrt{\varrho_\kappa}\D u_\kappa\|_{L^2((0,T)\times\T_L^3)}\|\kappa^{1/4}\nabla\varrho_\kappa^{1/4}\|_{L^4((0,T)\times\T_L^3)}\Big(k^{-1/2}\kappa^{-1/4}\|\varrho_\kappa^{3/4}\|_{L^4((0,T)\times\T_L^3)} \\
+k^{-\delta/4}\kappa^{-1/4}\|\varrho_\kappa\|_{L^{\frac{1}{1-\delta}}((0,T)\times\T_L^3)}^{1/4}\|\varrho_\kappa^{1/4}u_\kappa\|_{L^4((0,T)\times\T_L^3)}\Big), \end{multline*}
    which converges to $0$ with $k\to\infty,\kappa\to 0$.

    For $A_2$, we write
    \[\begin{aligned}
        A_2 =& \int_0^T\!\!\!\int_{\T_L^3}\psi_n(|v_\kappa|)\varrho_\kappa\phi_k^\infty(\varrho_\kappa)|\D u_\kappa|^2\;\dd x\,\dd t \\
        &+ \int_0^T\!\!\!\int_{\T_L^3}\psi_n(|v_\kappa|)\varrho_\kappa\phi_k^\infty(\varrho_\kappa)\D u_\kappa:(\nabla\phi_k^\infty(\varrho_\kappa)\otimes u_\kappa)\;\dd x\,\dd t \\
        =& A_{2,1} + A_{2,2}.
    \end{aligned}\]
    The term $A_{2,1}$ is positive and thus we can estimate it in (\ref{after_m}) by $0$. The term $A_{2,2}$ however, is estimated by 
    \[\begin{aligned}
        |A_{2,2}| \leq & \int_0^T\!\!\!\int_{\T_L^3}\psi_n(|v_\kappa|)|v_\kappa|\varrho_\kappa|\D u_\kappa||\nabla\phi_k^\infty(\varrho_\kappa)|\;\dd x\,\dd t \\
        \leq & C(n,T)\frac{\kappa^{-1/4}}{k^{\delta/4}}\|\varrho_\kappa^{1/4}u_\kappa\|_{L^4(0,T;L^4)}^\delta\|\kappa^{1/4}\nabla\varrho_\kappa^{1/4}\|_{L^4(0,T;L^4)} \times \\
        &\times \|\sqrt{\varrho_\kappa}\D u_\kappa\|_{L^2(0,T;L^2)}\|\varrho_\kappa\|_{L^\infty(0,T;L^{\frac{1}{1-\delta}})}^{1/4}
    \end{aligned}\]
    which converges to $0$ as well.
    
    What is left is the term
    \[\begin{aligned} \int_0^T S_2(t)\;\dd t =& \kappa\int_0^T\!\!\!\int_{\T_L^3}\phi_k^\infty(\varrho_\kappa)\sqrt{\varrho_\kappa} \Delta\sqrt{\varrho_\kappa}\mathbb{I}:\nabla(\psi_n(|v_\kappa|)v_\kappa)\;\dd x\,\dd t \\
    =& \kappa\int_0^T\!\!\!\int_{\R^3}\phi_k^\infty(\varrho_\kappa)\sqrt{\varrho_\kappa}\Delta\sqrt{\varrho_\kappa}\frac{\psi_n'(|v_\kappa|)}{|v_\kappa|}\mathbb{I}:(v_\kappa\otimes v_\kappa\nabla v_\kappa)\;\dd x\,\dd t \\
    &+ \kappa\int_0^T\!\!\!\int_{\R^3}\psi_n(|v_\kappa|)\phi_k^\infty(\varrho_\kappa)\sqrt{\varrho_\kappa}\Delta\sqrt{\varrho_\kappa}\ddiv v_\kappa \;\dd x\,\dd t \\
    =& B_1+B_2.
    \end{aligned}\]
    Similarly as before, 
   \[\begin{aligned}
       |B_1| \leq & \kappa^{1/2}\|\kappa^{1/2}\Delta\sqrt{\varrho_\kappa}\|_{L^2(0,T;L^2)}\|\sqrt{\varrho_\kappa}\D u_\kappa\|_{L^2(0,T;L^2)} \\
       &+\kappa^{1/4}\|\kappa^{1/2}\Delta\sqrt{\varrho_\kappa}\|_{L^2(0,T;L^2)}\|\varrho_\kappa^{1/4}u_\kappa\|_{L^4(0,T;L^4)}\|\kappa^{1/4}\nabla\varrho_\kappa^{1/4}\|_{L^4(0,T;L^4)}
   \end{aligned}\]
   and 
   \[\begin{aligned}
       |B_2| \leq & C_n\kappa^{1/2}\|\kappa^{1/2}\Delta\sqrt{\varrho_\kappa}\|_{L^2(0,T;L^2)}\|\sqrt{\varrho_\kappa}\ddiv u_\kappa\|_{L^2(0,T;L^2)} \\
       &+ C_n\kappa^{1/4}\|\kappa^{1/2}\Delta\sqrt{\varrho_\kappa}\|_{L^2(0,T;L^2)}\|\kappa^{1/4}\nabla\varrho_\kappa^{1/4}\|_{L^4(0,T;L^4)}\|\varrho_\kappa^{1/4}u_\kappa\|_{L^4(0,T;L^4)},
   \end{aligned}\]
   which means that both $B_1,B_2\to 0$.

    In conclusion, after performing the limit passage $\kappa\to 0$ in (\ref{after_m}), we end up with the estimate (\ref{MV_n}).
\end{proof}

With the estimate (\ref{MV_n}) at hand, we are ready to finish the proof of Lemma \ref{MV_lem}. Since $F_n\nearrow F$, simply taking the limit $n\to\infty$ in $(\ref{MV_n})$, by monotone convergence theorem we derive (\ref{MV}). In order to show that the limit $(\varrho,u)$ satisfies (\ref{main2}) with $\kappa=0$, we focus only on $\varrho_\kappa\D u_\kappa$, since the limit passage in the remaining terms is performed in the same way as before. We will show that 
\[ \sqrt{\varrho_\kappa} u_\kappa \to \sqrt{\varrho} u \quad \text{in} \quad L^2((0,T)\times\T_L^3). \]
The proof is similar to the proof of Lemma \ref{convergence2}. From the weak convergence of $\varrho_\kappa^{1/4}u_\kappa$ in $L^4((0,T)\times\T_L^3)$ it follows that
\[ \int_0^T\!\!\!\int_{\T_L^3}\varrho|u|^4\;\dd x\,\dd t \leq \liminf_{\kappa\to 0}\int_0^T\!\!\!\int_{\T_L^3} \varrho_\kappa u_\kappa \;\dd x\,\dd t \leq C. \]
From the pointwise convergence of $\varrho_\kappa u_\kappa$ it follows that $\sqrt{\varrho_\kappa}T_M(u_\kappa)\to \sqrt{\varrho}T_M(u)$ a.e. for the truncation $T_M$ defined as in (\ref{truncation}). Since $\sqrt{\varrho_\kappa}T_M(u_\kappa)$ is also uniformly bounded with respect to $\kappa$ in $L^\infty(0,T;L^6)$, in particular it also converges in $L^2((0,T)\times\T_L^3)$ and therefore
\[ \begin{aligned} \|\sqrt{\varrho_\kappa}u_\kappa-\sqrt{\varrho}u\|_{L^2((0,T)\times\T_L^3)}\leq & \|\sqrt{\varrho_\kappa}T_M(u_\kappa)-\sqrt{\varrho}T_M(u)\|_{L^2((0,T)\times\T_L^3)} \\
&+ 2\|\sqrt{\varrho_\kappa}|u_\kappa|\mathbbm{1}_{|u_\kappa|>M}\|_{L^2((0,T)\times\T_L^3)} + 2\|\sqrt{\varrho}|u|\mathbbm{1}_{|u|>M}\|_{L^2((0,T)\times\T_L^3)}. \end{aligned} \]
The last two terms are estimated as follows:
\[ \int_0^T\!\!\!\int_{\T_L^3}\varrho_\kappa|u_\kappa|^2\mathbbm{1}_{|u_\kappa|>M}\;\dd x\,\dd t \leq \frac{1}{M^3}\int_0^T\!\!\!\int_{\T_L^3}\varrho_\kappa|u_\kappa|^4\;\dd x\,\dd t \leq \frac{C}{M^3} \]
and for the limit analogously. In conclusion,
\[ \limsup_{\kappa\to 0}\|\sqrt{\varrho_\kappa}u_\kappa-\sqrt{\varrho}u\|_{L^2((0,T)\times\T_L^3)} \leq \frac{C}{M^3} \]
and the convergence follows by taking $M\to\infty$.

With the above convergence, we pass to the limit using the relation 
\[ \varrho_\kappa\D u_\kappa = \D(\sqrt{\varrho_\kappa}\sqrt{\varrho_\kappa}u_\kappa) - \nabla\sqrt{\varrho_\kappa}\otimes\sqrt{\varrho_\kappa}u_\kappa \]
and weak convergence of $\nabla\sqrt{\varrho_\kappa}$, coming from (\ref{BD3}).
\end{proof}

\begin{rem}\label{Du_rem3}
Note that at this point we cannot use (\ref{Du}) to define $\sqrt{\varrho}\nabla u$ as in the previous section. However, from the uniform estimates we still have the convergence
\[ \sqrt{\varrho_\kappa}\nabla u_\kappa \rightharpoonup \overline{\sqrt{\varrho}\nabla u} \quad \text{in} \quad L^2((0,T)\times\T_L^3). \]
Using the strong convergence of $\sqrt{\varrho_\kappa}$, we can then pass to the limit in the relation
\[ \sqrt{\varrho_\kappa}\sqrt{\varrho_\kappa}\nabla u_\kappa = \nabla(\varrho_\kappa u_\kappa) - \nabla\sqrt{\varrho_\kappa}\otimes\sqrt{\varrho_\kappa}u_\kappa \]
and obtain
\begin{equation}\label{Du2} \sqrt{\varrho}\overline{\sqrt{\varrho}\nabla u} = \nabla(\varrho u)-\nabla\sqrt{\varrho}\otimes\sqrt{\varrho} u. \end{equation}
In particular, from (\ref{Du2}) it follows that $\nabla(\varrho u)\in L^2(0,T;L^1)$. Similarly as before, we will drop the bars and define other differential operators of $u$ analogously.
\end{rem}

\section{Limit passage with $r_0,r_1\to 0$}\label{r_sect}
In the previous section, we constructed the solutions to 
\begin{equation}\label{main3}
    \begin{aligned}
    \partial_t\varrho + \ddiv(\varrho u) &= 0 \\
    \partial_t(\varrho u) + \ddiv(\varrho u\otimes u) -\ddiv(\varrho\D u) + \varrho\nabla(K_L\ast\varrho) &= -r_0u - r_1\varrho|u|^2u,
    \end{aligned}
\end{equation}
defined on the torus $\T_L^3$, satisfying the estimates (\ref{energy3}) and (\ref{BD3}) with $\kappa=0$, together with (\ref{MV}).

Our next goal is to perform the last limit passage $r_0,r_1\to 0$ and in consequence obtain the solutions to (\ref{main}) on the torus.
The main tool to do so is the following lemma:
\begin{lem}\label{r_lem}
    Let $\Omega=\T^3$ or $\R^3$. Assume the sequence $(\varrho_n,u_n)$ satisfies uniformly the following estimates:
    \begin{equation}\label{main_energy2}
        \sup_{t\in[0,T]}\int_\Omega \varrho_n|u_n|^2\;\dd x\,\dd t + \int_0^T\!\!\!\int_\Omega \varrho_n|\D u_n|^2\;\dd x\,\dd t \leq C,
    \end{equation}

    \begin{equation}\label{main_BD2}
        \sup_{t\in[0,T]}\int_\Omega |\nabla\sqrt{\varrho_n}|^2\;\dd x + \int_0^T\!\!\!\int_\Omega \varrho_n|\nabla u_n-\nabla^Tu_n|^2\;\dd x\,\dd t \leq C,
    \end{equation}

    \begin{equation}\label{MV2}
        \sup_{t\in [0,T]}\int_\Omega \varrho_nF(|u_n|)\;\dd x \leq C,
    \end{equation}
    and
    \[ \|\partial_t\varrho_n\|_{L^\infty(0,T;W^{-1,3/2})}, \|\partial_t(\varrho_nu_n)\|_{L^2(0,T;W^{-2,4/3})} \leq C.
    \]
    Then up to a subsequence we have
    \[\begin{aligned}
        \varrho_n\to\varrho &\quad \text{in} \quad C(0,T;L^{3/2}_{\loc})
    \end{aligned} \]
    (in consequence also in particular $\sqrt{\varrho_n}\to \sqrt{\varrho}$ in $C(0,T;L_{\loc}^2)$). Moreover, there exists a function $m$ such that 
    \[ \varrho_nu_n\to m \quad \text{in} \quad L^2(0,T;L^p_{\loc}), \quad p<{3/2}, \]
    and $m(t,x)=0$ a. e. on $\{(t,x): \varrho(t,x)=0\}$. In conclusion, there exists also a function $u$ (defined uniquely on the set $\{(t,x):\varrho(t,x)\neq 0\}$) such that $m=\varrho u$, and moreover
    \[ \sqrt{\varrho_n}u_n\to \sqrt{\varrho}u \quad \text{in} \quad L^2_{\loc}([0,T]\times\Omega).\]
\end{lem}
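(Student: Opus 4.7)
The plan is to mirror the structure of Lemmas \ref{convergence1}--\ref{convergence2}, adapting it to the absence of the $\kappa$-dependent $L^2(0,T;H^2)$ control on $\sqrt{\varrho_n}$ which is no longer available. First, from the BD bound (\ref{main_BD2}) the sequence $\sqrt{\varrho_n}$ is bounded in $L^\infty(0,T;H^1_{\loc})$, hence in $L^\infty(0,T;L^6_{\loc})$ by Sobolev embedding. Writing $\nabla\varrho_n=2\sqrt{\varrho_n}\nabla\sqrt{\varrho_n}$ gives a uniform $L^\infty(0,T;L^{3/2}_{\loc})$ bound on $\nabla\varrho_n$, which together with the assumption $\partial_t\varrho_n\in L^\infty(0,T;W^{-1,3/2})$ and the Aubin--Lions--Simon lemma yields $\varrho_n\to\varrho$ in $C(0,T;L^{3/2}_{\loc})$ along a subsequence. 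In particular, $\sqrt{\varrho_n}\to\sqrt{\varrho}$ in $C(0,T;L^2_{\loc})$ and pointwise a.e.

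For the momentum variable, I would decompose
\begin{equation*}
\nabla(\varrho_n u_n)=2\,\nabla\sqrt{\varrho_n}\otimes\sqrt{\varrho_n}u_n+\sqrt{\varrho_n}\cdot\sqrt{\varrho_n}\nabla u_n,
\end{equation*}
where the factor $\sqrt{\varrho_n}\nabla u_n$ is controlled in $L^2((0,T)\times\Omega)$ by combining the symmetric part from (\ref{main_energy2}) with the antisymmetric part from (\ref{main_BD2}). This gives $\nabla(\varrho_n u_n)$ uniformly bounded in $L^2(0,T;L^1_{\loc})$, while $\varrho_n u_n$ itself is bounded in $L^\infty(0,T;L^{3/2})$ via Hölder from $\sqrt{\varrho_n}\in L^\infty(L^6)$ and $\sqrt{\varrho_n}u_n\in L^\infty(L^2)$. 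Together with the time-regularity hypothesis, Aubin--Lions yields $\varrho_n u_n\to m$ strongly in $L^2(0,T;L^p_{\loc})$ for every $p<3/2$. Passing to a further subsequence and letting $g$ denote the weak-$*$ limit of $\sqrt{\varrho_n}u_n$ in $L^\infty(0,T;L^2)$, the identity $\varrho_n u_n=\sqrt{\varrho_n}\cdot\sqrt{\varrho_n}u_n$ combined with the strong convergence of $\sqrt{\varrho_n}$ identifies $m=\sqrt{\varrho}\,g$ in the sense of distributions, hence a.e. In particular $m$ vanishes a.e.\ on $\{\varrho=0\}$, and the velocity $u$ is well-defined on $\{\varrho>0\}$ via $u:=g/\sqrt{\varrho}$, with $m=\varrho u$ and $g=\sqrt{\varrho}u$.

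The main step is the strong convergence $\sqrt{\varrho_n}u_n\to\sqrt{\varrho}u$ in $L^2_{\loc}$, and this is where the Mellet--Vasseur estimate (\ref{MV2}) is essential. Since $F(z)\geq\frac{z^2}{2}\ln(1+M^2)$ for $|z|\geq M$, (\ref{MV2}) yields
\begin{equation*}
\int_0^T\!\!\int_\Omega\varrho_n|u_n|^2\mathbbm{1}_{\{|u_n|>M\}}\,\dd x\,\dd t\leq\frac{2}{\ln(1+M^2)}\int_0^T\!\!\int_\Omega\varrho_n F(|u_n|)\,\dd x\,\dd t\leq\frac{C(T)}{\ln(1+M^2)},
\end{equation*}
uniformly in $n$. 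The analogous bound for the limit $(\varrho,u)$ follows by Fatou: on $\{\varrho>0\}$ we have $u_n=\varrho_n u_n/\varrho_n\to u$ a.e., so $\varrho F(|u|)\leq\liminf_n\varrho_n F(|u_n|)$. For fixed $M$, the truncated sequence $\sqrt{\varrho_n}T_M(u_n)$ (with $T_M$ as in (\ref{truncation})) is dominated by $M\sqrt{\varrho_n}$ and converges a.e.\ to $\sqrt{\varrho}T_M(u)$ (trivially on $\{\varrho=0\}$, via $u_n\to u$ on $\{\varrho>0\}$), so Vitali's theorem together with the strong convergence of $\sqrt{\varrho_n}$ gives $\sqrt{\varrho_n}T_M(u_n)\to\sqrt{\varrho}T_M(u)$ in $L^2_{\loc}$. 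Splitting $u_n=T_M(u_n)+(u_n-T_M(u_n))$, the triangle inequality combined with the two estimates above yields
\begin{equation*}
\limsup_{n\to\infty}\|\sqrt{\varrho_n}u_n-\sqrt{\varrho}u\|_{L^2([0,T]\times B)}\leq\frac{C}{\sqrt{\ln(1+M^2)}}
\end{equation*}
for every bounded $B\subset\Omega$; sending $M\to\infty$ concludes. The delicate point is that no compactness argument gives control on $u_n$ itself on $\{\varrho=0\}$, so every identification must be phrased through the weighted quantities $\sqrt{\varrho_n}u_n$ and $\varrho_n u_n$, with Mellet--Vasseur providing the equi-integrability needed to upgrade the strong convergence from the truncated sequence to the full one.
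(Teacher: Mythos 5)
Your proof is correct and follows exactly the Mellet--Vasseur stability argument that the paper itself invokes: the paper skips the proof of this lemma and refers to \cite{mellet-vasseur}, and your writeup is a faithful reconstruction of that argument (Aubin--Lions for $\varrho_n$ and $\varrho_nu_n$, identification of $m=\sqrt{\varrho}\,g$ via the product of a strong and a weak limit, then the truncation $T_M$ combined with the equi-integrability of $\varrho_n|u_n|^2$ supplied by the bound on $\varrho_nF(|u_n|)$). This is also the same mechanism the paper uses in Lemma \ref{convergence2} and in the $\kappa\to 0$ passage, with $\varrho F(|u|)$ replacing $\varrho|u|^4$ as the source of uniform integrability once $r_1\to 0$.
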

Lemma \ref{r_lem} combines together the consecutive limit passages performed in \cite{mellet-vasseur} in order to show stability of solutions to (\ref{main}) without the nonlocal term. Because of that, we skip the proof and instead refer to \cite{mellet-vasseur}.

In our case, let $r=r_0=r_1$ and $(\varrho_r,u_r)$ be the corresponding solution to (\ref{main3}). From (\ref{energy3}), (\ref{BD3}) and (\ref{MV}), we have the required uniform estimates. For the time regularity, from the continuity equation, we have
\[ \|\partial_t\varrho_r\|_{W^{-1,3/2}(\T_L^3)} \leq \|\sqrt{\varrho_r}\|_{L^6(\T_L^3)}\|\sqrt{\varrho_r}u_r\|_{L^2(\T_L^3)}. \]
For $\partial_t(\varrho_r u_r)$, the term of the highest order is $\ddiv(\varrho_ru_r\otimes u_r)$. Since for $\varphi\in W^{2,4}(\Omega)$
\[ \int_\Omega \ddiv(\varrho_ru_r\otimes u_r)\cdot\varphi\;\dd x = -\int_\Omega (\sqrt{\varrho_r}u_r\otimes\sqrt{\varrho_r}u_r):\nabla\varphi\;\dd x \leq \|\varrho_r|u_r|^2\|_{L^1(\Omega)}\|\nabla\varphi\|_{L^\infty(\Omega)}, \]
we get the bound
\[ \|\partial_t(\varrho_ru_r)\|_{L^2(0,T;W^{-2,4/3})} \leq C. \]
By Lemma \ref{r_lem}, we can pass to the limit in all terms in the weak formulation of (\ref{main3}). In the convective term we use strong convergence of $\sqrt{\varrho_r}u_r$ and to pass to the limit in the viscous stress tensor we use the relation
\[ \varrho\D u = \D(\varrho u) - 2\nabla\sqrt{\varrho}\otimes\sqrt{\varrho}u. \]
For the nonlocal term, similarly as before we use the fact that $K_L\in L^p(\T_L^3)$ for $p<3/\alpha$. Since $\nabla\varrho_r$ is bounded in $L^\infty(0,T;L^{3/2})$, up to a subsequence we get
\[ K_L\ast\nabla\varrho_r \to K_L\ast\nabla\varrho \quad \text{in} \quad L^\infty(0,T;L^{\frac{3}{\alpha-1}}). \]
The convergence of $\varrho_r\nabla(K_L\ast\varrho_r)$ follows thus from the strong convergence of $\varrho_r$.
For the rest of the terms, we have
\[ r_0\left|\int_0^T\!\!\!\int_{\T_L^3}u_r\varphi \;\dd x\,\dd t\right| \leq \sqrt{r_0}\|\sqrt{r_0}u_r\|_{L^2([0,T]\times\T_L^3)}\|\varphi\|_{L^2([0,T]\times\T_L^3)} \to 0 \]
and
\[ r_1\left|\int_0^T\!\!\!\int_{\T_L^3}\varrho_r|u_r|^2u_r\varphi\;\dd x\,\dd t\right| \leq r_1^{1/4}\|\varrho_r\|_{L^\infty(0,T;L^3)}^{1/4}\|r_1^{1/4}\varrho_r^{1/4}u_r\|_{L^4(0,T;L^4)}^3\|\varphi\|_{L^\infty(0,T;L^6)} \to 0. \]

Using weak lower semicontinuity of the norm, Fatou's lemma and (\ref{MV}), the limit solution also satisfies the inequalities (\ref{energy3}), (\ref{BD3}) and (\ref{MV}) with $r_0=r_1=\kappa=0$.

Note that up to this point we obtained the solutions for initial conditions satisfying 
\[ \tilde\varrho_{0,L}=\varrho_{0,L}+\frac{1}{m_1} \quad \text{and} \quad \sqrt{\tilde\varrho_{0,L}}u_{0,L}\in L^\infty(\T_L^3). \]
 However, Lemma \ref{r_lem} in particular provides the sequential stability of solutions. Repeating the above reasoning, we are able to pass to the limit with $m_1\to\infty$ and with the truncation of initial data. In conclusion, we constructed weak solutions to the system

\begin{equation}\label{last_eq}
\begin{aligned}
        \partial_t\varrho + \ddiv(\varrho u) &= 0, \\
    \partial_t(\varrho u) + \ddiv(\varrho u\otimes u) - \ddiv(\varrho\D u) + \varrho\nabla(K_L\ast\varrho)
    &= 0
\end{aligned}
\end{equation}
on the torus $\T_L^3$, with the initial data $(\varrho_{0,L},u_{0,L})$ defined as in Section \ref{trunc_sect}, satisfying the energy estimate

\begin{equation}\label{final_energy_torus}
    \sup_{t\in[0,T]} E(\varrho,u) + \int_0^T\!\!\!\int_{\T_L^3}\varrho|\D u|^2\dd x\,\dd t
    \leq E(\varrho_0,u_0)
\end{equation}
for
\[ E(\varrho,u)=\frac{1}{2}\int_{\T_L^3} \varrho|u|^2+\varrho(K_L\ast\varrho)\;\dd x, \]
the Bresch--Desjardins estimates
\begin{multline}\label{final_BD_torus}
        \sup_{t\in[0,T]}\int_{\T_L^3} |\nabla\sqrt{\varrho}|^2\;\dd x + \frac{1}{8}\int_0^T\!\!\!\int_{\T_L^3}\varrho|\nabla u-\nabla^Tu|^2\dd x \\
        \leq 3E(\varrho_{0,L},u_{0,L}) + \int_{\T_L^3} |\nabla\sqrt{\varrho_{0,L}}|^2\dd x +CT\|\varrho_{0,L}\|_{L^1(\T_L^3)}^2
\end{multline}
and
\begin{multline}\label{final_MV_torus} 
    \sup_{t\in[0,T]}\Bigg(\int_{\T_L^3}\varrho F(|u|) \;\dd x +\iint_{\T_L^3\times\T_L^3} F(|x-y|)\varrho(x)\varrho(y)\;\dd x\,\dd y\Bigg) \\
    \leq C + C\left(\int_{\T_L^3}\varrho_{0,L} F(|u_{0,L}|) + \iint_{\T_L^3\times\T_L^3} F(|x-y|)\varrho_{0,L}(x)\varrho_{0,L}(y)\;\dd x\,\dd y\right),
    \end{multline}
    where $C$ depends on $E(\varrho_{0,L},u_{0,L})$ and the right hand side of (\ref{final_BD_torus}).

 \section{Expansion of the torus}\label{blowup_sect}

 Having the solutions to (\ref{last_eq}) defined on the torus $\T_L^3$, together with estimates (\ref{final_energy_torus})--(\ref{final_MV_torus}), we can now pass to the limit with $L\to\infty$ and in consequence obtain the solutions on the whole space $\R^3$. Let $(\varrho_L,u_L)$ be the solutions to (\ref{last_eq}) and by $(\tilde\varrho_L,\tilde u_L)$ we will denote $(\varrho_L,u_L)$ extended by zero outside the torus. Using the properties of $\varrho_{0,L}$ described in Lemma \ref{initial}, we have the following estimates, uniform in $L$:
\begin{multline}\label{energy4}
    \sup_{t\in[0,T]} \frac{1}{2}\int_{\R^3}\left(\tilde\varrho_L|\tilde u_L|^2 +\tilde\varrho_L(K_L\ast\tilde\varrho_L)\right)\dd x + \int_0^T\!\!\!\int_{[-L,L]^3}\tilde\varrho_L|\D \tilde u_L|^2\dd x\,\dd t \\
    \leq \frac{1}{2}\int_{\R^3} \varrho_0|u_0|^2+\varrho_0(K\ast\varrho_0)\;\dd x
\end{multline}
and
\begin{multline}\label{BD4}
        \sup_{t\in[0,T]}\int_{[-L,L]^3}|\nabla\sqrt{\tilde\varrho_L}|^2\;\dd x + \frac{1}{8}\int_0^T\!\!\!\int_{[-L,L]^3}\tilde\varrho_L|\nabla \tilde u_L-\nabla^T\tilde u_L|^2\dd x \\
        \leq \int_{\R^3} \varrho_0|u_0|^2+\varrho_0(K\ast\varrho_0)\;\dd x + \int_{\R^3} |\nabla\sqrt{\varrho_0}|^2\dd x +\frac{C}{L^2}\|\varrho_0\|_{L^1(\R^3)}^{1/2} +CT\|\varrho_0\|_{L^1(\R^3)}^2.
\end{multline}

From Lemma \ref{initial} it also follows that the right hand sides of (\ref{final_energy_torus}) and (\ref{final_BD_torus}) are independent of $L$. As a consequence, from (\ref{final_MV_torus}) we also get the uniform bound
\begin{equation}\label{MV4}
    \sup_{t\in[0,T]}\Bigg(\int_{\R^3}\tilde\varrho_L F(|\tilde u_L|) \;\dd x + \iint_{\R^3\times\R^3} F(|x-y|)\tilde\varrho_L(x)\tilde\varrho_L(y)\;\dd x\,\dd y\Bigg) \leq C
\end{equation}
for $C$ depending on $T$ and $\varrho_0,u_0$.

Now, let $V\subset\R^3$ be compact. Then $V\subset [-L,L]^3$ for sufficiently large $L$ and the estimates (\ref{energy4}), (\ref{BD4}) and (\ref{MV4}) provide the uniform estimates on $(\tilde\varrho_L,\tilde u_L)$ needed in Lemma \ref{MV_lem} for $\Omega=V$. Moreover, using analogous arguments as in the previous section, 
\[ \|\partial\tilde\varrho_L\|_{L^\infty(0,T;W^{-1,3/2}(V))}, \|\partial_t(\tilde\varrho_L,\tilde u_L)\|_{L^2(0,T;W^{2,4/3}(V))} \leq C. \]
Therefore up to a subsequence
we get the convergence from Lemma \ref{MV_lem} for $\Omega=V$. By the arbitrary choice of $V$ and applying the diagonal method, we finally get
\[\begin{aligned}
        \sqrt{\tilde\varrho_L}\to\sqrt{\varrho} &\quad \text{in} \quad C(0,T;L^2_{\loc}), \\
        \tilde\varrho_L\to\varrho &\quad \text{in} \quad C(0,T;L^{3/2}_{\loc}), \\
        \tilde\varrho_L\tilde u_L\to \varrho u &\quad \text{in} \quad L^2(0,T;L^{3/2}_{\loc}), \\
        \sqrt{\tilde\varrho_L}u_L\to \sqrt{\varrho}u &\quad \text{in} \quad L^2_{\loc}([0,T]\times\R^3).
    \end{aligned}\]
 Similarly from the Banach-Alaoglu theorem we get
 \[ \nabla\sqrt{\tilde\varrho_L}\mathbbm{1}_{[-L,L]^d}\rightharpoonup^* \nabla\sqrt{\varrho} \quad \text{in} \quad L^\infty(0,T;L^2_{\loc}). \]
Then for the term $\tilde\varrho_L\D u_L$ the relation (\ref{Du2}) again provides convergence in the sense of distributions.

\subsection{Convergence of the nonlocal term}

The obtained convergence allows us to pass to the limit with $(\tilde\varrho_L,\tilde u_L)$ in the weak formulation of (\ref{last_eq}) in all terms except the nonlocal one. Note that since $K$ is unbounded, as $L\to\infty$ we lose any compactness properties of $K_L$. However, due to (\ref{energy4}) we are able to show that

\begin{lem}\label{nonlocal_lem}
   \[ \tilde\varrho_L (\nabla K_L\ast\tilde\varrho_L) \to \varrho (\nabla K\ast\varrho) \quad \text{in} \quad L^1([0,T]\times\R^3). \]
\end{lem}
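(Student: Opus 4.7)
The plan is to split $K = K^s + K^q$ where $K^s(z) = |z|^{-\alpha}$ and $K^q(z) = \tfrac12|z|^2$, and to decompose
\[ \nabla K_L(z) = \phi_L(z)\nabla K(z) + K(z)\nabla\phi_L(z). \]
The difference $\tilde\varrho_L(\nabla K_L\ast\tilde\varrho_L) - \varrho(\nabla K\ast\varrho)$ then splits into an error from $K\nabla\phi_L$ plus main pieces from $\phi_L\nabla K^s$ and $\phi_L\nabla K^q$.

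First I would dispose of the $K\nabla\phi_L$ error. Since $\nabla\phi_L$ is supported in the annulus $L/2 < |z| < L$ with $|\nabla\phi_L|\leq C/L$, on this annulus $|K(z)\nabla\phi_L(z)|\leq C\max(L^{-\alpha-1},L)\leq CL^{-1}|z|^2$. The $L^1([0,T]\times\R^3)$ norm of its contribution is therefore bounded by
\[ \frac{C}{L}\int_0^T\!\!\iint_{\R^3\times\R^3}|x-y|^2\tilde\varrho_L(x)\tilde\varrho_L(y)\,\dd x\,\dd y\,\dd t, \]
which vanishes as $L\to\infty$ by the Mellet--Vasseur moment bound (\ref{MV4}).

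For the main piece $\phi_L\nabla K$ I use a smooth cutoff $\chi_R$ with $\chi_R\equiv 1$ on $|z|\leq R$ and supported in $|z|\leq 2R$. Using $F(|z|)\geq\tfrac12|z|^2\ln(1+R^2)$ for $|z|>R$ together with (\ref{MV4}) yields
\[ \iint|x-y|\,\mathbbm{1}_{|x-y|>R}\tilde\varrho_L(x)\tilde\varrho_L(y)\,\dd x\,\dd y\leq \frac{C}{R\ln(1+R^2)}, \]
controlling the quadratic tail, while the singular tail $|\phi_L\nabla K^s|\mathbbm{1}_{|z|>R}\leq R^{-\alpha-1}$ integrated against $\tilde\varrho_L(x)\tilde\varrho_L(y)$ is bounded by $CR^{-\alpha-1}\|\tilde\varrho_L\|_{L^1}^2$; both bounds are uniform in $L$ and vanish as $R\to\infty$. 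For $L\geq 4R$ one has $\phi_L\equiv 1$ on $|z|\leq 2R$, so the truncated main piece becomes the $L$-independent kernel $\chi_R\nabla K$, which lies in $L^p(\R^3)$ for some $p>1$ since $\alpha+1<3$. Convolving this fixed kernel with $\tilde\varrho_L$, which converges to $\varrho$ in $C(0,T;L^{3/2}_{\loc})$, yields strong convergence of the convolution in $L^r_{\loc}$ for some $r>1$ by standard Young-type estimates; multiplying by the strongly convergent $\tilde\varrho_L$ and applying H\"older locally gives $L^1_{\loc}$ convergence in $x$. The tail in $x$, at $|x|>R'$, is handled by the same moment argument: $\iint_{|x|>R'}|x-y|\tilde\varrho_L(x)\tilde\varrho_L(y)\,\dd x\,\dd y$ is again controlled by (\ref{MV4}) uniformly in $L$. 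First letting $L\to\infty$ and then $R, R'\to\infty$ closes the argument.

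The principal obstacle is the dual pathology of $K$: simultaneously unbounded at infinity (quadratic part) and non-integrable at the origin in any natural $L^p$ (singular part for $\alpha$ close to $2$). No single norm of $K$ is finite, so no off-the-shelf convolution estimate applies directly. The resolution relies essentially on combining the Mellet--Vasseur moment bound, which tames the far field of $K^q$, with the Bresch--Desjardins-type regularity $\nabla\sqrt{\varrho}\in L^\infty_tL^2_{\loc}$ and the resulting strong $L^{3/2}_{\loc}$ compactness, which handles the Riesz-type singularity of $K^s$. Neither estimate is sufficient on its own, and the two scales must be separated via the cutoff $\chi_R$ before passing to the limit $L\to\infty$.
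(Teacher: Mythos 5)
Your decomposition and your treatment of the relative variable $x-y$ follow the paper's proof essentially step by step: the $K\nabla\phi_L$ error and the far field $|x-y|>R$ are absorbed by the second-moment bounds coming from (\ref{energy4}) and (\ref{MV4}), and the near field reduces, for $L$ large, to a fixed kernel in $L^p(\R^3)$ with $p<3/(\alpha+1)$ convolved with a density converging strongly in $C(0,T;L^{3/2}_{\loc})$. All of these steps are correct and match the paper's argument (the paper merely keeps the $\nabla\phi_L$ terms inside the far-field estimate rather than removing them first).

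The one step that fails is your control of the tail in $x$. The quantity $\iint_{|x|>R'}|x-y|\,\tilde\varrho_L(x)\tilde\varrho_L(y)\,\dd x\,\dd y$ is \emph{not} small for large $R'$ uniformly in $L$: the bound (\ref{MV4}) constrains only the relative variable $x-y$ and is translation invariant, so it cannot localize the mass in $x$ — if $\tilde\varrho_L$ were concentrated near a point $x_0$ with $|x_0|\sim 2R'$, the restricted integral would equal the full one. The same objection applies to the singular part of the truncated kernel restricted to $|x|>R'$. To upgrade your $L^1_{\loc}$ convergence of the near-field product to the claimed convergence in $L^1([0,T]\times\R^3)$ you need tightness of $\tilde\varrho_L(t,\cdot)\,\dd x$, uniformly in $t$ and $L$, and this must come from the \emph{single} second moment $\int_{\R^3}|x|^2\tilde\varrho_L\,\dd x$ (finite initially because the double moment is finite while the total mass is positive, and propagated on $[0,T]$ via $\frac{\dd}{\dd t}\int|x|^2\varrho=2\int x\cdot\varrho u\leq 2(\int|x|^2\varrho)^{1/2}(\int\varrho|u|^2)^{1/2}$), not from (\ref{MV4}). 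The paper instead estimates the whole near-field product $\tilde\varrho_L(\nabla K_L\mathbbm{1}_{B_R}\ast\tilde\varrho_L)$ uniformly in $L^\infty(0,T;L^q(\R^3))$ for some $q>1$ and combines this with a.e. convergence; note also that for the limit passage in the weak formulation only local convergence is actually used. Apart from this point, your argument is sound.
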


\begin{proof}
First, note that from the strong convergence of $\tilde\varrho_L$ it follows that up to a subsequence
\[ \tilde\varrho_L \to \varrho \quad \text{a. e. in} \quad [0,T]\times\R^3. \]
From (\ref{energy4}) in particular we have that
\[ \sup_{t\in[0,T]}\iint_{\R^3\times\R^3}\tilde\varrho_L(t,x)\tilde\varrho_L(t,y)|x-y|^2\phi_L(x-y)\;\dd x\,\dd y \leq C. \]
Therefore by Fatou's lemma
\begin{multline}\label{moment_lim} \iint_{\R^3\times\R^3}\varrho(t,x)\varrho(t,y)|x-y|^2\;\dd x\,\dd y \\
\leq \liminf_{L\to\infty}\iint_{\R^3\times\R^3}\tilde\varrho_L(t,x)\tilde\varrho_L(t,y)|x-y|^2\phi_L(x-y) \;\dd x\,\dd y \leq C. \end{multline}

Since $\nabla\sqrt{\varrho_L}$ is bounded in $L^\infty(0,T;L^2_{loc})$, we also have the bound on $\|\varrho_L\|_{L^\infty(0,T;L^3_{loc})}$ and in consequence
\[ \tilde\varrho_L \to \varrho \quad \text{in} \quad C(0,T;L^p_{\loc}) \]
for any $3/2<p<3$. 

Now fix $R>0$ and denote by $B_R$ the ball of radius $R$ centered in zero. From the definition of $K_L$ and the strong convergence of $\tilde\varrho_L$ we know that $\nabla K_L\to K$ and $\tilde\varrho_L\to\varrho$ a. e. Moreover, $\nabla K_L\mathbbm{1}_{B_R}$ is uniformly bounded with respect to $L$ in $L^p(\R^3)$ for $p<\frac{3}{\alpha+1}$. Therefore we also have
\[ \|\tilde\varrho_L(\nabla K_L\mathbbm{1}_{B_R}\ast\varrho_L)\|_{L^\infty(0,T;L^q)}\leq \|\nabla K_L\mathbbm{1}_{B_R}\|_{L^p(\R^3)}\|\tilde\varrho_L\|_{L^\infty(0,T;L^3)}^2 \]
for $q<\frac{3}{\alpha}$ and in consequence
\[ \tilde\varrho_L(\nabla K_L\mathbbm{1}_{B_R}\ast\tilde\varrho_L) \to \varrho(\nabla K\mathbbm{1}_{B_R}\ast\varrho) \quad \text{in} \quad L^1([0,T]\times\R^3). \]
We  now estimate the rest. We have
\begin{equation*} I=\left|\int_{\R^3} \tilde\varrho_L\left((\nabla K_L\left((1-\mathbbm{1}_{B_R}\right)\ast\tilde\varrho_L\right) \;\dd x\right|
\leq \iint_{|x-y|>R}\tilde\varrho_L(t,x)\tilde\varrho_L(t,y)|\nabla K_L(x-y)|\;\dd x\,\dd y. \end{equation*}
Using the definition of $K_L$ and $\phi_L$, we have
\[\begin{aligned} |\nabla K_L(x-y)| \leq & \phi_L(x-y)\left(\frac{1}{|x-y|^{\alpha+1}} + |x-y|\right) \\
&+ |\nabla\phi_L(x-y)|\left(\frac{1}{|x-y|^\alpha} + \frac{1}{2}|x-y|^2\right) \\
\leq & \frac{1}{|x-y|^{\alpha+1}}+\phi_L(x-y)|x-y| + \frac{C}{L}\left(\frac{1}{|x-y|^\alpha} + \frac{1}{2}|x-y|^2\right). \end{aligned}\]
Therefore
\[\begin{aligned} I
\leq & \frac{1}{R^{\alpha+1}}\iint_{\R^3\times\R^3} \tilde\varrho_L(t,x)\tilde\varrho_L(t,y)\;\dd x\,\dd y + \iint_{|x-y|>R} \phi_L(x-y)\tilde\varrho_L(t,x)\tilde\varrho_L(t,y)|x-y|\;\dd x\,\dd y \\
&+ \frac{C}{LR^\alpha}\iint_{\R^3\times\R^3}\tilde\varrho_L(t,x)\tilde\varrho_L(t,y)\;\dd x\,\dd y + \frac{C}{L}\iint_{|x-y|>R}\tilde\varrho_L(t,x)\tilde\varrho_L(t,y)|x-y|^2\dd x\,\dd y.
\end{aligned}\]
Let us now estimate all terms on the right hand side. For the first and the third term, we have
\[ \iint_{\R^3\times\R^3}\tilde\varrho_L(t,x)\tilde\varrho_L(t,y)\;\dd x\,\dd y = \|\varrho_L\|_{L^1(\T_L^3)}^2=\|\varrho_{0,L}\|_{L^1(\T_L^3)}^2 \leq \|\varrho_0\|_{L^1(\R^3)}^2. \]
For the second term, by (\ref{energy4}) we get
\begin{multline*} \iint_{|x-y|>R}\phi_L(x-y)\tilde\varrho_L(t,x)\tilde\varrho_L(t,x)|x-y|\;\dd x\,\dd y \\
\leq \frac{1}{R}\iint_{\R^3\times\R^3}\phi_L(x-y)\tilde\varrho_L(t,x)\tilde\varrho_L(t,y)|x-y|^2\;\dd x\,\dd y \leq \frac{C}{R}. \end{multline*}
Finally we estimate the last term using (\ref{MV4}) as
\begin{multline*} \iint_{\R^{3\times 3}}\tilde\varrho_L(t,x)\tilde\varrho_L(t,y)|x-y|^2\mathbbm{1}_{|x-y|>R}\;\dd x\,\dd y \\
\leq \frac{1}{\ln(1+R^2)}\iint_{\R^{3\times 3}}\tilde\varrho_L(t,x)\tilde\varrho_L(t,y)F(|x-y|)\;\dd x\,\dd y \leq \frac{C}{\ln(1+R^2)}. \end{multline*}

In consequence,
\[ I \leq \frac{C}{R}+\frac{C}{L\ln(1+R^2)}. \]
Doing analogous estimates for $K$, using (\ref{moment_lim}), we get
\begin{multline*} \left|\iint_{|x-y|>R}\varrho(t,x)\varrho(t,y)\nabla K(x-y)\;\dd x\,\dd y\right| \\
\leq \frac{1}{R^2}\iint_{\R^{3\times 3}} \varrho(t,x)\varrho(t,y)\;\dd x\,\dd y + \iint_{|x-y|>R}\varrho(t,x)\varrho(t,y)|x-y|\;\dd x\,\dd y \leq \frac{C}{R}. \end{multline*}
Combining the above estimates, we obtain
\[ \liminf_{L\to\infty}\int_0^T\!\!\!\int_{\R^3} \left|\tilde\varrho_L\nabla K_L\ast\tilde\varrho_L-\varrho\nabla K\ast\varrho)\right| \;\dd x\,\dd t \leq \frac{C}{R}. \]
Finally, we end the proof by taking $R\to\infty$.
\end{proof}

Lemma \ref{nonlocal_lem} finishes the limit passage in the weak formulation of (\ref{last_eq}). In consequence, we obtain the weak solution to (\ref{main}) on $[0,T]\times\R^3$. Since $\nabla\sqrt{\tilde\varrho_L} \rightharpoonup \nabla\sqrt{\varrho}$ in $L^2(0,T;L^2_{loc})$, 
\[ \int_{\R^3}|\nabla\sqrt{\varrho}|^2\;\dd x = \lim_{R\to\infty}\int_{B(0,R)}|\nabla\sqrt{\varrho}|^2\;\dd x \leq \lim_{R\to\infty}\liminf_{L\to\infty}\int_{B(0,R)}|\nabla\sqrt{\tilde\varrho_L}|^2\;\dd x \leq C(T). \]
Doing analogously with $\sqrt{\varrho}\nabla u$, we finally show that the solution satisfies the estimates (\ref{main_energy}), (\ref{main_BD}) and (\ref{main_MV}).

\subsection{Mass preservation}

In the end, let us conclude that

\begin{lem}
    The total mass is conserved, i.e. 
    $\displaystyle \int_{\R^3}\varrho \;\dd x = \int_{\R^3}\varrho_0 \;\dd x. $
\end{lem}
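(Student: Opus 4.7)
The plan is to use the continuity equation in its weak form tested against a spatial cutoff, and then exploit the uniform energy estimates to show that the flux through a far-away sphere is negligible.

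Let $\chi_R\in C_c^\infty(\R^3)$ be a radial cutoff with $\chi_R\equiv 1$ on $B(0,R)$, $\chi_R\equiv 0$ outside $B(0,2R)$, and $|\nabla\chi_R|\leq C/R$. For a fixed $t_0\in(0,T)$, let $\psi_\varepsilon(t)$ be a smooth function equal to $1$ on $[0,t_0]$, vanishing on $[t_0+\varepsilon,T)$, and affine in between. Then $\varphi(t,x):=\chi_R(x)\psi_\varepsilon(t)$ is an admissible test function in Definition \ref{Def:main}, and plugging it in gives
\[
 -\int_0^T\!\!\!\int_{\R^3}\varrho\,\chi_R\,\psi_\varepsilon'\;\dd x\,\dd t -\int_0^T\!\!\!\int_{\R^3}\varrho u\cdot\nabla\chi_R\,\psi_\varepsilon\;\dd x\,\dd t =\int_{\R^3}\varrho_0\,\chi_R\;\dd x.
\]
Sending $\varepsilon\to 0$ and using the weak-in-time continuity of $\varrho$ (which follows from $\partial_t\varrho\in L^\infty(0,T;W^{-1,3/2}_{\mathrm{loc}})$ established earlier), we obtain, for a.e. $t_0\in(0,T)$,
\[
 \int_{\R^3}\varrho(t_0,x)\chi_R(x)\;\dd x - \int_0^{t_0}\!\!\!\int_{\R^3}\varrho u\cdot\nabla\chi_R\;\dd x\,\dd t =\int_{\R^3}\varrho_0(x)\chi_R(x)\;\dd x.
\]

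The next step is to pass $R\to\infty$. By monotone convergence, $\int\varrho(t_0,\cdot)\chi_R\to\int\varrho(t_0,\cdot)$ and $\int\varrho_0\chi_R\to\int\varrho_0$. The flux term is estimated via Cauchy--Schwarz:
\[
 \left|\int_0^{t_0}\!\!\!\int_{\R^3}\varrho u\cdot\nabla\chi_R\;\dd x\,\dd t\right|\leq \frac{C}{R}\int_0^{t_0}\!\!\!\int_{\R^3}\sqrt{\varrho}\cdot\sqrt{\varrho}|u|\;\dd x\,\dd t\leq \frac{C}{R}\int_0^{t_0}\|\varrho\|_{L^1}^{1/2}\|\sqrt{\varrho}u\|_{L^2}\;\dd t.
\]
The energy inequality (\ref{main_energy}) controls $\|\sqrt{\varrho}u\|_{L^\infty(0,T;L^2)}$ and $\|\varrho\|_{L^\infty(0,T;L^1)}$ is bounded, so the right-hand side is $O(t_0/R)\to 0$ as $R\to\infty$. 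This yields $\int_{\R^3}\varrho(t_0,\cdot)\;\dd x=\int_{\R^3}\varrho_0\;\dd x$ for a.e. $t_0$, which is the claim.

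The only delicate point is the justification of the limit $\varepsilon\to 0$, which requires that $t\mapsto\int\varrho(t,\cdot)\chi_R\;\dd x$ has a well-defined value at $t_0$; this follows from the weak continuity of $\varrho$ in $W^{-1,3/2}_{\mathrm{loc}}$ together with its uniform $L^1\cap L^3_{\mathrm{loc}}$ bound, so that $\int\varrho(t,\cdot)\chi_R\;\dd x$ is continuous on $[0,T]$. Every other step is a routine application of the a priori estimates.
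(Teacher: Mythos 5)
Your proof is correct, and the key computation --- testing the continuity equation against a cutoff $\chi_R$ and bounding the flux $\int\varrho u\cdot\nabla\chi_R$ by $CR^{-1}\|\varrho\|_{L^1}^{1/2}\|\sqrt{\varrho}u\|_{L^2}$ --- is exactly the one the paper uses. The structural difference is where the argument is run: you work directly with the weak formulation of the limit system on $\R^3$ (Definition \ref{Def:main}), which gives the identity $\int\varrho(t_0)\chi_R=\int\varrho_0\chi_R+O(t_0/R)$ and hence equality of masses in one pass, whereas the paper performs the cutoff computation on the torus approximants $\varrho_L$ (obtaining only the lower bound $\int\varrho\geq\int\varrho_0-C/R$ after the local-in-space convergence $\tilde\varrho_L\to\varrho$) and supplies the opposite inequality separately via Fatou's lemma. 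Your route is slightly cleaner in that it avoids Fatou and the $L\to\infty$ bookkeeping; its only hidden prerequisite is that the limit is already known to satisfy $\varrho\in L^\infty(0,T;L^1)$ and the weak continuity equation on all of $\R^3$ with the stated integrability, both of which have been established at that stage of the paper (the $L^1$ bound itself coming from Fatou, so that lemma is not entirely dispensed with). One cosmetic point: $\psi_\varepsilon$ as described is only Lipschitz, so strictly one should mollify it to stay within the class $C_0^\infty([0,T)\times\R^3)$; this is routine.
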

\begin{proof}
    By Fatou's lemma, we have
    \[ \int_{\R^3}\varrho \;\dd x \leq \liminf_{L\to\infty}\int_{\R^3}\varrho_L\;\dd x = \liminf_{L\to\infty}\int_{\T_L^3}\varrho_{L,0}\dd x \leq \int_{\R^3}\varrho_0\;\dd x. \]
    
    On the other hand, for a smooth function $\phi_R$ such that $\phi_R(x)=1$ for $|x|<R/2$, $\phi_R\in (0,1)$ and $\mathrm{supp}\phi_R\subset B_R$, $|\nabla\phi_R|\leq\frac{C}{R}$, for $L$ large enough we have
    \[\begin{aligned} \int_{\R^3}\tilde\varrho_L(t,x)\phi_R(x)\;\dd x =& \int_{\T_L^3}\varrho_L(0,x)\phi_R(x)\;\dd x + \int_{\T_L^3}\varrho_L(t,x)u_L(t,x)\cdot\nabla\phi_R(x)\;\dd x \\
    \geq & \int_{\T_L^3}\varrho_{0,L}(x)\phi_R(x)\;\dd x - \frac{C}{R}.
    \end{aligned}\]
    
    Therefore

     \[\begin{aligned} \int_{\R^3}\varrho \;\dd x \geq \int_{|x|\leq R}\varrho \;\dd x = \lim_{L\to\infty}\int_{|x|\leq R}\tilde\varrho_L\dd x \geq \lim_{L\to\infty}\int_{\T_L^3}\varrho_{L,0}\phi_R\;\dd x - \frac{C}{R} \geq \int_{\R^3}\varrho_0\;\dd x - \frac{C}{R}, \end{aligned}\]
    and taking  $R\to +\infty$ we conclude the proof.
\end{proof}

\begin{appendices}
    \section{Proof of Bresch-Desjardins estimates}\label{BD_app}
    Below we show how to derive the inequality (\ref{BD}). We need to compute
    \begin{equation}\label{rach1}
    \begin{aligned} \frac{\dd}{\dd t}\int_{\T_L^3}\left(\frac{1}{2}\varrho|u+\nabla\log\varrho|^2 + \varrho(K_L\ast\varrho)+\frac{\delta}{2}|\nabla\Delta\varrho|^2+\frac{\kappa}{2}|\nabla\sqrt{\varrho}|^2+\frac{\eta}{7}\varrho^{-6} \right)\dd x \\
    = \frac{\dd}{\dd t} E(\varrho,u) + \frac{\dd}{\dd t}\int_{\T_L^3}\varrho u\cdot\nabla\log\varrho \;\dd x + \frac{\dd}{\dd t}\int_{\T_L^3}\varrho|\nabla\log\varrho|^2\;\dd x. \end{aligned}\end{equation}
    For the first term on the right hand side of (\ref{rach1}), we use the energy inequality (\ref{energy}). For the second term, we have
    \begin{equation}\label{bd2}
    \begin{aligned}
    \int_{\T_L^3}\varrho u\cdot\partial_t\nabla\log\varrho \;\dd x &= - \int_{\T_L^3}\ddiv(\varrho u)\frac{1}{\varrho}\partial_t\varrho \;\dd x \\
    &= \int_{\T_L^3}\frac{1}{\varrho}(\ddiv(\varrho u))^2\;\dd x - \varepsilon\int_{\T_L^3}\frac{1}{\varrho}\Delta\varrho\ddiv(\varrho u) \;\dd x
    \end{aligned}\end{equation}
    and from the momentum equation
    \[\begin{aligned} \int_{\T_L^3}\partial_t(\varrho u)\cdot\nabla\log\varrho \;\dd x =& -\int_{\T_L^3}\ddiv(\varrho u\otimes u)\cdot\nabla\log\varrho\;\dd x + \int_{\T_L^3}\ddiv(\varrho\D u)\cdot\nabla\log\varrho\;\dd x \\
    &- \int_{\T_L^3}\nabla(K_L\ast\varrho)\cdot\nabla\log\varrho \;\dd x + \kappa\int_{\T_L^3}\varrho\nabla\left(\frac{\Delta\sqrt{\varrho}}{\sqrt{\varrho}}\right)\cdot\nabla\log\varrho\;\dd x \\
    &-r_0\int_{\T_L^3}u\cdot\nabla\log\varrho\;\dd x - r_1\int_{\T_L^3}\varrho|u|^2u\cdot\nabla\log\varrho \;\dd x \\
    &-\varepsilon\int_{\T_L^3}\nabla\varrho\cdot\nabla u\cdot\nabla\log\varrho \;\dd x - \nu\int_{\T_L^3}\Delta^2u\cdot\nabla\log\varrho \\
    &+\eta\int_{\T_L^3}\nabla\varrho^{-6}\cdot\nabla\log\varrho\;\dd x + \delta\int_{\T_L^3}\varrho\nabla\Delta^3\varrho\cdot\nabla\log\varrho\;\dd x \\
    =& -\int_{\T_L^3}\ddiv(\varrho u\otimes u)\cdot\nabla\log\varrho\;\dd x - \int_{\T_L^3}\varrho\D u:\nabla^2\log\varrho\;\dd x \\
    &- \int_{\T_L^3}\nabla(K_L\ast\varrho)\cdot\nabla\log\varrho\;\dd x -\frac{\kappa}{2}\int_{\T_L^3}\varrho|\nabla^2\log\varrho|^2\;\dd x \\
    &-r_0\int_{\T_L^3}u\cdot\frac{1}{\varrho}\nabla\varrho\;\dd x - r_1\int_{\T_L^3}|u|^2u\cdot\nabla\varrho \;\dd x \\
    &-\varepsilon\int_{\T_L^3}\nabla\varrho\cdot\nabla u\cdot\nabla\log\varrho \;\dd x -\nu\int_{\T_L^3}\Delta u\cdot\nabla\Delta\log\varrho\;\dd x \\
    &- \frac{2}{3}\eta\int_{\T_L^3}|\nabla\varrho^{-3}|^2\;\dd x -\delta\int_{\T_L^3}|\Delta^2\varrho|^2\;\dd x.
    \end{aligned}\]

    Note that
    \[\begin{aligned} -\int_{\T_L^3}\varrho\D u:\nabla^2\log\varrho\;\dd x =& -
    \frac{1}{2}\sum_{i,j}\int_{\T_L^3}\varrho(\partial_{x_i}u_j+\partial_{x_j}u_i)\partial_{x_ix_j}\log\varrho \;\dd x \\
    =& \sum_{i,j}\int_{\T_L^3}\Big(\partial_{x_j}(\varrho\partial_{x_i}u_j)\partial_{x_i}\log\varrho + \partial_{x_i}(\varrho\partial_{x_j}u_i)\partial_{x_j}\log\varrho\Big)\dd x \\
    =& \int_{\T_L^3} \nabla u:(\nabla\varrho\otimes\nabla\log\varrho)\;\dd x + \int_{\T_L^3}\varrho\nabla\ddiv u\cdot\nabla\log\varrho\;\dd x \\
    =& \int_{\T_L^3} \nabla u:(\nabla\varrho\otimes\nabla\log\varrho)\;\dd x - \int_{\T_L^3}\Delta\varrho\ddiv u\;\dd x,
    \end{aligned}\]
which gives
\begin{equation}\label{bd3}\begin{aligned} \int_{\T_L^3}\partial_t(\varrho u)\cdot\nabla\log\varrho \;\dd x =& -\int_{\T_L^3}\ddiv(\varrho u\otimes u)\cdot\nabla\log\varrho\;\dd x + \int_{\T_L^3}\nabla u:(\nabla\varrho\otimes\nabla\log\varrho)\;\dd x \\
&-\int_{\T_L^3}\Delta\varrho\ddiv u\;\dd x \\
    &- \int_{\T_L^3}\nabla(K_L\ast\varrho)\cdot\nabla\log\varrho\;\dd x -\frac{\kappa}{2}\int_{\T_L^3}\varrho|\nabla^2\log\varrho|^2\;\dd x \\
    &-r_0\int_{\T_L^3}u\cdot\frac{1}{\varrho}\nabla\varrho\;\dd x - r_1\int_{\T_L^3}|u|^2u\cdot\nabla\varrho \;\dd x \\
    &-\varepsilon\int_{\T_L^3}\nabla\varrho\cdot\nabla u\cdot\nabla\log\varrho \;\dd x -\nu\int_{\T_L^3}\Delta u\cdot\nabla\Delta\log\varrho\;\dd x \\
    &- \frac{2}{3}\eta\int_{\T_L^3}|\nabla\varrho^{-3}|^2\;\dd x -\delta\int_{\T_L^3}|\Delta^2\varrho|^2\;\dd x.
\end{aligned} \end{equation}

To compute the last term on the right hand side of (\ref{rach1}), we see that from the continuity equation
\[\begin{aligned} \partial_t\frac{|\nabla\log\varrho|^2}{2} =& \nabla\log\varrho\cdot\partial_t\nabla\log\varrho \\
=& \nabla\log\varrho\cdot\nabla (-\ddiv((\log\varrho) u)+(\log\varrho-1)\ddiv u +\varepsilon\frac{1}{\varrho}\Delta\varrho) \\
    =& -u\cdot\nabla^2\log\varrho\cdot\nabla\log\varrho - \nabla u:(\nabla\log\varrho\otimes\nabla\log\varrho) - \nabla\log\varrho\cdot\nabla\ddiv u \\
    &+ \varepsilon\nabla\log\varrho\cdot\nabla\left(\frac{1}{\varrho}\Delta\varrho\right).
    \end{aligned}\]
The above calculations are justified, since $u\in L^2(0,T;H^2)$, $\varrho\in L^\infty(0,T;H^3)$ and $\varrho$ is bounded away from zero. Using the above calculations, we derive

\begin{equation}\label{bd4}
    \begin{aligned}
    \frac{\dd}{\dd t}\int_{\T_L^3}\varrho\frac{|\nabla\log\varrho|^2}{2}\dd x =& \int_{\T_L^3}\varrho\partial_t\frac{|\nabla\log\varrho|^2}{2}\dd x - \int_{\T_L^3}\frac{|\nabla\log\varrho|^2}{2}\ddiv(\varrho u)\dd x \\
    &+ \varepsilon\int_{\T_L^3}\frac{|\nabla\log\varrho|^2}{2}\Delta\varrho\;\dd x \\
    =& -\int_{\T_L^3}\varrho u\cdot\nabla^2\log\varrho\cdot\nabla\log\varrho)\;\dd x - \int_{\T_L^3}\varrho\nabla u:(\nabla\log\varrho\otimes\nabla\log\varrho)\;\dd x \\
    &-\int_{\T_L^3}\varrho\nabla\log\varrho\cdot\nabla\ddiv u\;\dd x + \int_{\T_L^3}\varrho u\cdot\nabla\frac{|\nabla\log\varrho|^2}{2}\dd x \\
    &+\varepsilon\int_{\T_L^3}\varrho\nabla\log\varrho\cdot\nabla\left(\frac{1}{\varrho}\Delta\varrho\right)\;\dd x +\varepsilon\int_{\T_L^3}\frac{|\nabla\log\varrho|^2}{2}\Delta\varrho\;\dd x \\
    =& -\int_{\T_L^3}\varrho\nabla u:(\nabla\log\varrho\otimes\nabla\log\varrho)\dd x + \int_{\T_L^3}\Delta\varrho\ddiv u\;\dd x \\
    & -\varepsilon\int_{\T_L^3}\frac{|\Delta\varrho|^2}{\varrho}\;\dd x +\varepsilon\int_{\T_L^3}\frac{|\nabla\log\varrho|^2}{2}\Delta\varrho\;\dd x
    \end{aligned}
    \end{equation}

Since $\nabla\varrho\cdot\nabla\log\varrho = \varrho|\nabla\log\varrho|^2$, 
and $\nabla\varrho\otimes\nabla\log\varrho = \varrho\nabla\log\varrho\otimes\nabla\log\varrho,$
combining (\ref{bd2}), (\ref{bd3}) and (\ref{bd4}), we get
\[\begin{aligned} \frac{\dd}{\dd t}\int_{\T_L^3}\varrho\frac{|\nabla\log\varrho|^2}{2} + \varrho u\cdot\nabla\log\varrho \dd x =& \int_{\T_L^3}\frac{1}{\varrho}(\ddiv(\varrho u))^2\;\dd x - \varepsilon\int_{\T_L^3}\frac{1}{\varrho}\Delta\varrho\ddiv(\varrho u) \;\dd x  \\
&-\int_{\T_L^3}\ddiv(\varrho u\otimes u)\cdot\nabla\log\varrho\;\dd x \\
&-\varepsilon\int_{\T_L^3}\frac{|\Delta\varrho|^2}{\varrho}\;\dd x  + \varepsilon\int_{\T_L^3}\frac{|\nabla\log\varrho|^2}{2}\Delta\varrho\;\dd x \\
&- \int_{\T_L^3}\nabla(K_L\ast\varrho)\cdot\nabla\log\varrho\;\dd x -\frac{\kappa}{2}\int_{\T_L^3}\varrho|\nabla^2\log\varrho|^2\;\dd x \\
    &-r_0\int_{\T_L^3}u\cdot\frac{1}{\varrho}\nabla\varrho\;\dd x - r_1\int_{\T_L^3}|u|^2u\cdot\nabla\varrho \;\dd x \\
    &-\varepsilon\int_{\T_L^3}\nabla\varrho\cdot\nabla u\cdot\nabla\log\varrho \;\dd x -\nu\int_{\T_L^3}\Delta u\cdot\nabla\Delta\log\varrho\;\dd x \\
    &- \frac{2}{3}\eta\int_{\T_L^3}|\nabla\varrho^{-3}|^2\;\dd x -\delta\int_{\T_L^3}|\Delta^2\varrho|^2\;\dd x. \end{aligned}\]
We have the relations 
\[\begin{aligned}
-\nabla\log\varrho\cdot\ddiv(\varrho u\otimes u) &= -\frac{1}{\varrho}\sum_{i,j}\partial_{x_i}\varrho\Big(\partial_{x_j}\varrho u_iu_j + \varrho\partial_{x_j}u_iu_j + \varrho u_i\partial_{x_j}u_j\Big) \\
&= -\frac{1}{\varrho}\Big((u\cdot\nabla\varrho)^2 + \varrho u\cdot(\nabla u\nabla\varrho) + \varrho\ddiv u u\cdot\nabla\varrho\Big)
\end{aligned}\]
and
\[ \frac{1}{\varrho}(\ddiv(\varrho u))^2 = \frac{1}{\varrho}\Big(\varrho^2(\ddiv u)^2 + 2\varrho\ddiv u u\cdot\nabla\varrho + (u\cdot\nabla\varrho)^2\Big), \]
therefore
\[\begin{aligned}\int_{\T_L^3}-\nabla\log\varrho\cdot\ddiv(\varrho u\otimes u) + \frac{1}{\varrho}(\ddiv(\varrho u))^2\;\dd x =& \int_{\T_L^3} \ddiv u u\cdot\nabla\varrho - u\cdot(\nabla u\nabla\varrho) + \varrho(\ddiv u)^2 \;\dd x \\
=& \sum_{i,j}\int_{\T_L^3}\varrho\partial_{x_j}u_i\partial_{x_i}u_j\;\dd x
\end{aligned}\]
Subtracting $\int_{\T_L^3}\varrho|\D u|^2\;\dd x$, we get
\[\begin{aligned} \int_{\T_L^3}\varrho\sum_{i,j}\left(-\left(\frac{\partial_{x_j}u_i+\partial_{x_i}u_j}{2}\right)^2+\partial_{x_j}u_i\partial_{x_i}u_j\right)\;\dd x &= -\int_{\T_L^3}\varrho\sum_{i,j}\left(\frac{\partial_{x_j}u_i-\partial_{x_i}u_j}{2}\right)^2 \;\dd x \\
&= -\frac{1}{4}\int_{\T_L^3}\varrho|\nabla u-\nabla^Tu|^2\;\dd x. \end{aligned}\]

Combining the above calculations with (\ref{energy}) and integrating in time, we finally obtain
\[
\begin{aligned}
        E_{BD} & (\varrho,u) + \frac{2}{3}\eta(1+\varepsilon)\int_0^T\!\!\!\int_{\T_L^3}|\nabla\varrho^{-3}|^2\dd x\,\dd t + \delta(1+\varepsilon)\int_0^T\!\!\!\int_{\T_L^3}|\Delta^2\varrho|^2\dd x\,\dd t \\
        &+ \frac{1}{4}\int_0^T\!\!\!\int_{\T_L^3}\varrho|\nabla u-\nabla^Tu|^2\dd x\,\dd t + \nu\int_0^T\!\!\!\int_{\T_L^3}|\Delta u|^2\dd x\,\dd t \\
        &+ r_0\int_0^T\!\!\!\int_{\T_L^3}|u|^2\;\dd x\,\dd t + r_1\int_0^T\!\!\!\int_{\T_L^3}\varrho|u|^4\;\dd x\,\dd t \\
        &+ \frac{\kappa(1+\varepsilon)}{2}\int_0^T\!\!\!\int_{\T_L^3}\varrho|\nabla^2\log\varrho|^2\dd x\,\dd t + \varepsilon\int_0^T\!\!\!\int_{\T_L^3}\frac{|\Delta\varrho|^2}{\varrho}\dd x + \int_0^T\!\!\!\int_{\T_L^3}\nabla(K_L\ast\varrho)\cdot\nabla\varrho \;\dd x \\
        \leq & E_{BD}(\varrho_0,u_0) + C\varepsilon T\left(\int_{\T_L^3}\varrho_0\;\dd x\right)^2 \\
        &+ \varepsilon\int_0^T\!\!\!\int_{\T_L^3}\left(\nabla\varrho\cdot\nabla u\cdot\nabla\log\varrho + \Delta\varrho\frac{|\nabla\log\varrho|^2}{2} - \ddiv(\varrho u)\frac{1}{\varrho}\Delta\varrho\right)\dd x \\
        &- \nu\int_0^T\!\!\!\int_{\T_L^3}\Delta u\cdot\nabla\Delta\log\varrho \;\dd x - r_1\int_0^T\!\!\!\int_{\T_L^3}|u|^2u\nabla\varrho\;\dd x - r_0\int_0^T\!\!\!\int_{\T_L^3}\frac{u\cdot\nabla\varrho}{\varrho}\;\dd x
        \end{aligned} \]
for
\[ E_{BD}(\varrho,u) = \int_{\T_L^3} \left(\frac{1}{2}\varrho\left|u+\frac{1}{\varrho}\nabla\varrho\right|^2 + \varrho(K_L\ast\varrho) + \frac{\delta}{2}|\nabla\Delta\varrho|^2 + \frac{\kappa}{2}|\nabla\sqrt{\varrho}|^2 + \frac{\eta}{7}\varrho^{-6}\right)\dd x. \]

\section{Weak Gronwall's lemma}
    Below, let us present the weak version of Gronwall's Lemma, which becomes useful in Section \ref{MV_sect}:

\begin{lem}[Weak version of Gronwall's lemma]\label{weak_gronwall}
    Let $f\in L^1(0,T)$ satisfy
    \[ -\int_0^T \xi'(s)f(s)\dd s \leq \int_0^T \xi(s)(af(s) + b(s))\dd s \]
    for any $\xi\in C^\infty_0(0,T)$, $\xi\geq 0$, a constant $a\geq 0$ and nonnegative function $b\in L^1(0,T)$. Then for almost all $0\leq s<t<T$ we have
    \[ f(t) \leq f(s)e^{a(t-s)}+\int_s^t e^{a(t-\tau)}b(\tau) \;\dd\tau \]
\end{lem}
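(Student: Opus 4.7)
The plan is to reduce the weak differential inequality $f' \leq af+b$ (read distributionally against nonnegative test functions) to a distributional monotonicity statement for an integrating-factor transform of $f$, and then to translate distributional monotonicity into pointwise monotonicity at Lebesgue points.

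First, I would introduce the function
\[ g(\tau):=e^{-a\tau}f(\tau)-\int_0^\tau e^{-a\sigma}b(\sigma)\;\dd\sigma, \]
which is still in $L^1(0,T)$. The target inequality is equivalent to $g(t)\leq g(s)$ for almost all $s<t$, because rearranging $g(t)\leq g(s)$ gives exactly $f(t)\leq f(s)e^{a(t-s)}+\int_s^t e^{a(t-\sigma)}b(\sigma)\,\dd\sigma$.

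Next, for any nonnegative $\eta\in C_0^\infty(0,T)$, I would plug the admissible test function $\xi(\tau)=e^{-a\tau}\eta(\tau)\geq 0$ into the hypothesis. Since $\xi'=-ae^{-a\tau}\eta+e^{-a\tau}\eta'$, the terms involving $af$ cancel and the hypothesis collapses to
\[ -\int_0^T e^{-a\tau}f(\tau)\eta'(\tau)\;\dd\tau\leq \int_0^T e^{-a\tau}b(\tau)\eta(\tau)\;\dd\tau. \]
Integrating by parts in the remaining piece of $g$ (legitimate because $\eta$ has compact support) yields
\[ \int_0^T \left(\int_0^\tau e^{-a\sigma}b(\sigma)\;\dd\sigma\right)\eta'(\tau)\;\dd\tau=-\int_0^T e^{-a\tau}b(\tau)\eta(\tau)\;\dd\tau, \]
and combining the two displays gives $-\int_0^T g(\tau)\eta'(\tau)\;\dd\tau\leq 0$ for every nonnegative $\eta\in C_0^\infty(0,T)$. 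In distributional language, $g'\leq 0$ in $\mathcal{D}'(0,T)$.

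Finally, I would invoke the standard fact that such a function admits a nonincreasing representative: for Lebesgue points $s<t$ of $g$ and small $\varepsilon>0$, one picks a nonnegative $\eta_\varepsilon\in C_0^\infty(0,T)$ which is essentially a mollified version of $\mathbbm{1}_{[s,t]}$, so that $\eta_\varepsilon'$ behaves like $\varepsilon^{-1}\mathbbm{1}_{[s-\varepsilon,s]}-\varepsilon^{-1}\mathbbm{1}_{[t,t+\varepsilon]}$. Plugging $\eta_\varepsilon$ into $-\int g\eta'_\varepsilon\leq 0$ and letting $\varepsilon\to 0$, Lebesgue differentiation yields $g(t)-g(s)\leq 0$. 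Since a.e. point of $[0,T]$ is a Lebesgue point, this establishes $g(t)\leq g(s)$ for a.e. $s<t$, which is precisely the asserted Gronwall estimate. The only delicate point is this last step, converting the distributional sign of $g'$ into a pointwise comparison, but it is entirely classical and only uses that $g\in L^1(0,T)$.
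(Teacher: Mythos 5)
Your proposal is correct, but it takes a genuinely different route from the paper's. The paper fixes $t$, tests the hypothesis with $\xi(s)=\eta_\varepsilon(t-s)$ so that the weak inequality becomes the classical pointwise inequality $f_\varepsilon'(t)\leq af_\varepsilon(t)+b_\varepsilon(t)$ for the smooth mollification $f_\varepsilon=f\ast\eta_\varepsilon$, applies the standard Gronwall lemma to $f_\varepsilon$, and then lets $\varepsilon\to 0$ at points where $f_\varepsilon\to f$. You instead perform the integrating-factor step directly at the distributional level, by choosing $\xi=e^{-a\tau}\eta$: the computation that the $af$ terms cancel is correct, the integration by parts against the absolutely continuous function $\tau\mapsto\int_0^\tau e^{-a\sigma}b(\sigma)\,\dd\sigma$ is legitimate, and you arrive at $g'\leq 0$ in $\mathcal{D}'(0,T)$ for $g(\tau)=e^{-a\tau}f(\tau)-\int_0^\tau e^{-a\sigma}b(\sigma)\,\dd\sigma$; the final step, converting a nonpositive distributional derivative of an $L^1$ function into the inequality $g(t)\leq g(s)$ at Lebesgue points via mollified indicators, is classical and your sketch of it is sound. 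The two arguments are of comparable length and both ultimately invoke mollification and Lebesgue differentiation to recover pointwise values; what your version buys is that the classical Gronwall lemma is not needed as a black box (monotonicity of $g$ replaces it), and the reduction to $g'\leq 0$ makes transparent exactly where the hypothesis is used. The paper's version is slightly more mechanical and reuses the smooth Gronwall inequality verbatim, at the minor cost of having to note that $\xi(s)=\eta_\varepsilon(t-s)$ is an admissible test function only for $\varepsilon$ small relative to the distance of $t$ from the endpoints. Either proof is acceptable.
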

\begin{proof}
    Let $f_\varepsilon=f\ast\eta_\varepsilon$, where $\eta_\varepsilon$ is a standard mollifier. Fix $t\in (0,T)$ and let $\xi(s)=\eta_\varepsilon(t-s)$. Then $f$ satisfies
    \[ \int_0^T f(s)\eta_\varepsilon'(t-s)\dd s \leq a\int_0^T f(s)\eta_\varepsilon(t-s)\dd s + \int_0^T b(s)\eta_\varepsilon(t-s)\;\dd s, \]
    which is equivalent to
    \[ f_\varepsilon'(t) \leq af_\varepsilon(t) + b_\varepsilon(t). \]
    Then from Gronwall inequality on $f_\varepsilon$, we get
    \[ f_\varepsilon(t) \leq f_\varepsilon(s)e^{a(t-s)} + \int_s^t e^{a(t-\tau)}b_\varepsilon(\tau)\dd\tau, \]
    for any $0\leq s<t<T$. Choosing $s,t$ such that $f_\varepsilon\to f$ pointwise in $s,t$ and passing to the limit with $\varepsilon\to 0$, we get
    \[ f(t)\leq f(s)e^{a(t-s)}+\int_s^te^{a(t-\tau)}b(\tau)\dd\tau. \]
\end{proof}

\end{appendices}

\bigbreak

\noindent
\textbf{Acknowledgement:} The first author (PBM) has been partly supported by the Polish National Science Centre’s Grant No. 2018/30/M/ST1/00340 (HARMONIA). The work of M. S. was supported by the National Science Centre grant no. 2022/45/N/ST1/03900 (Preludium). The work of E.Z. was supported by the EPSRC Early Career Fellowship no. EP/V000586/1.

\bibliographystyle{abbrv}
\bibliography{biblio.bib}

\end{document}